\numberwithin{theorem}{section}
\numberwithin{equation}{section}
\newcommand{\TheTitle}{Compressing Large-Scale Wave Propagation Models via Phase-Preconditioned Rational Krylov Subspaces}
\newcommand{\ShortTheTitle}{Phase-preconditioned Rational Krylov subspaces} 
\newcommand{\TheAuthors}{V.~Druskin, R.~Remis, M.~Zaslavsky, and J.~Zimmerling}
\headers{\ShortTheTitle}{\TheAuthors}
\title{{\TheTitle}\thanks{Submitted to the editors DATE.
\funding{This research is supported by the Dutch Technology Foundation STW (project number 14222), which is part of the Netherlands Organisation for Scientific Research (NWO), and which is partly funded by the Ministry of Economic Affairs.}}}
\author{
  Vladimir~Druskin\thanks{Schlumberger-Doll Research, Cambridge, MA 19104-2688
    (\email{druskin@slb.com},\email{mzaslavsky@slb.com}).} 
  \and
  Rob~F.~Remis\thanks{Circuits and Systems Group, Faculty of Electrical Engineering, Mathematics and Computer
Science, Delft University of Technology, Mekelweg 4, 2628 CD Delft, The Netherlands 
(\email{R.F.Remis@tudelft.nl},\email{j.t.zimmerling@tudelft.nl}).}
  \and
    Mikhail~Zaslavsky\footnotemark[2]
    \and
     J\"orn~T.~Zimmerling\footnotemark[3].
}
\def\be#1\ee{\begin{equation}#1\end{equation}}
\newcommand{\bea}{\begin{eqnarray}}
\newcommand{\eea}{\end{eqnarray}}
\newcommand{\beas}{\begin{eqnarray*}}
\newcommand{\eeas}{\end{eqnarray*}}
\newcommand{\calK}{\mathcal{K}}
\renewcommand{\Re}{\mathfrak{Re} \,}
\renewcommand{\Im}{\mathfrak{Im} \,}
\newcommand{\Kb}[2]{ \mathcal{K}_{#1} \left(  #2 \right) }
\renewcommand{\exp}[1]{\ensuremath{\,\mathrm{exp}\!\left(#1\right)}}
\newsavebox{\measurebox}
\def\bsA{\mathsf{A}}
\def\bsB{\mathsf{B}}
\def\bsD{\mathsf{D}}
\def\bsQ{\mathsf{O}}
\def\bsQ{\mathsf{Q}}
\def\bsR{\mathsf{R}}
\def\bsT{\mathsf{T}}
\def\bsU{\mathsf{U}}
\def\bsb{\mathsf{b}}
\def\bsp{\mathsf{p}}
\def\bsu{\mathsf{u}}
\def\bsx{\mathsf{x}}
\def\bsy{\mathsf{y}}
\def\bsz{\mathsf{z}}
\newcommand{\RR}{{\mathbb{R}}}
\newcommand{\CC}{{\mathbb{C}}}
\begin{document}

\maketitle

\begin{abstract}
Rational Krylov subspace (RKS) techniques are well-established and powerful tools for projection-based model reduction of time-invariant dynamic systems. For hyperbolic wavefield problems, such techniques perform well in configurations where only a few modes contribute to the field. RKS methods, however, are fundamentally limited by the Nyquist-Shannon sampling rate, making them unsuitable for the approximation of wavefields in configuration characterized by large travel times and propagation distances, since wavefield responses in such configurations are highly oscillatory in the frequency-domain. To overcome this limitation, we propose to precondition the RKSs by factoring out the rapidly varying frequency-domain field oscillations. The remaining amplitude functions are generally slowly varying functions of source position and spatial coordinate and allow for a significant compression of the approximation subspace.  Our one-dimensional analysis together with numerical experiments for large scale 2D acoustic models show superior approximation properties of  preconditioned RKS compared with the standard RKS model-order reduction. The preconditioned RKS results in a reduction of the frequency sampling well below the Nyquist-Shannon rate, a weak dependence of the RKS size on the number of inputs and outputs for multiple-input/multiple-output (MIMO) problems, and, most importantly, in a significant coarsening of the finite-difference grid used to generate the RKS.  A prototype implementation indicates that the preconditioned RKS algorithm is competitive in the modern high performance computing environment.
\end{abstract}

\begin{keywords}
Model reduction, wave equation, Nyquist rate, geometrical optics, seismic exploration
\end{keywords}

\begin{AMS}
65M60, 65M80, 93B11, 37M05, 78A05
\end{AMS}

\section{Introduction}
Numerical modeling of wave propagation is fundamental to many applications in design optimization and wavefield imaging. In the oil and gas industry, for instance, the solution of the Maxwell equations is required to invert electromagnetic measurements, while in seismic imaging the solution to the elastodynamic wave equation is needed to ultimately image the subsurface of the Earth. 

Finite difference discretization of the governing wave equations leads to large-scale linear systems, whose solution is computationally intense. Imaging and optimization often use multiple frequencies, sources, and receivers, which leads to systems that need to be evaluated for multiple right-hand sides, time-steps or frequencies, depending on whether the problem is solved in the time- or frequency-domain. Therefore, these so-called multiple-input/multiple-output (MIMO) systems have a high demand on memory and computational power, causing long runtimes. To be more specific, let us consider a surface seismic imaging problem in a $k$ dimensional space ($1\le k\le 3$), with maximal propagation distance of $N$ wavelengths. This would require the solution of a discretized system with $O(N^k)$ state variables,
$O(N^{k-1})$ sources and receivers, and $O(N)$ frequencies or time steps \cite{Mulder1}. Model-order reduction aims to reduce the complexity and computational burden of large-scale problems and here we target all three of these factors.

Recently, promising results were obtained in the time-domain via multiscale model reduction \cite{chung2014generalized,druskin2016multi}. The time-domain multiscale algorithms can be efficiently parallelized via domain-decomposition, but time stepping still needs to be {\color{black} carried out} sequentially, while frequency-domain problems can be solved in parallel for different frequencies. Here we consider interpolatory projection-based model reduction in the frequency-domain, e.g., see \cite{InterpolatoryModelReduction}. The essence of this approach is the projection of the underlying system onto a rational Krylov subspace (originally introduced by Ruhe for eigenvalue computations \cite{Ruhe1984391}), which produces good quality, low order approximations if the spectrum of the system is well separated from the frequency interval of interest, as in the case of diffusion PDEs, e.g., \cite{beckermann2009error, Druskin20123883, DruskinDiffusion, guttel2010rational}.
In the context of wavefield modeling, such a separation of the clustered eigenvalues is introduced by losses present in the media or by the use of absorbing boundary conditions for the truncation of unbounded domains. Projection-based reduced-order models (ROMs) for wavefield problems may therefore exhibit fast convergence, especially for resonant configurations with few isolated resonant eigenmodes \cite{Bindel2006,Remis2013PKS,Remis2014EKS}. Some modifications of the RKS projection method can also be competitive for problems with smooth initial conditions leading to effective suppression of highly oscillatory eigenmodes \cite{gockler2013convergence}. 

{\color{black} Usually, the computational cost of projection-based reduced-order modeling is dominated by the generation of a suitable projection basis, e.g., see \cite{ParametricMOR, ROM_projectionref}. In the interpolatory projection-based ROM  the Helmholtz equation has to be solved at different frequencies (shifts) and the span of these  solutions forms the RKS basis}. The solution obtained from the Galerkin projection onto this subspace interpolates at the shifts, which are therefore also known as interpolation points. Moreover, for coinciding sources and receivers the transfer function and its first derivative is interpolated at these points. A general drawback of an RKS approach is that the number of interpolation points can become  large when wave field solutions with large travel times or propagation distances are of interest. Such wavefields are highly oscillatory in the frequency-domain and the Nyquist-Shannon sampling theorem states that this oscillatory field should be sampled with at least one point per oscillation (two points per wavelength). Consequently, the number of interpolation points required to accurately represent the wavefield increases with {\color{black} the} propagation distance. Moreover, discretization grids in Helmholtz solvers must also resolve wavefield oscillations. This requirement has an even more dramatic effect on the computational cost due to poor scalability of the available solvers. In favorable situations the best sampling rates approaching the Nyquist limit can be achieved with high-order spectral methods and their outgrowths. However, their cost per unknown can be significantly higher compared with less accurate low-order methods due to loss of sparsity.
In this paper, we show that the sampling demand can be significantly lowered by adding phase information to the model-order reduction technique leading to phase-preconditioned RKSs (PPRKS). 
Preconditioning of Krylov subspaces for model reduction is a tough and still open problem in general. However, to achieve it for particular applications one can try to incorporate the underlying physics and asymptotic analysis to arrive at PPRKS. 
Our approach is related to other known approaches in the field of oscillatory wave problem computation, such as  preconditioners for 
Helmholtz solvers \cite{engquist2011sweeping, Haber20114403}, Filon quadrature \cite{iserles2006highly}, and a recent approach to data compression using phase-tracking \cite{li2015phase}. 

In particular, we construct RKSs using polar decompositions of frequency-\emph{dependent} basis functions. These {\color{black} decompositions} consist of a product of {\it smooth} amplitude functions and a known frequency-dependent {\it oscillatory} phase term. The phase term is determined from high-frequency asymptotic expansions such as the WKB (Wentzel--Kramers--Brillouin) approximation \cite{Bender&Orszag}. The amplitude functions are computed by splitting the RKS into incoming and outgoing waves (by applying one-way wave operators) and factoring out the corresponding phase terms. {\color{black} Analogous to Filon quadrature,} we handle the highly oscillatory phase functions analytically and the smooth amplitude function numerically. By developing a block version of phase-preconditioned RKS for MIMO problems, we are also able to factor out the main dependence of the RKS on the input (source) location. This feature, and the reduction of the number of interpolation points mentioned above, leads to a significant compression of the approximation space.

Finally, the resulting phase-preconditioned ROMs can also extrapolate to frequencies outside the interval of interpolation points, since the basis functions are frequency-dependent and the amplitude functions are smooth for smoothly varying wave speed profiles. This enables us to coarsen the second-order finite-difference grid used for the RKS generation. 

{\color{black} In conclusion,} with phase-preconditioned RKS we can effectively reduce all of the above mentioned factors contributing to the complexity of the MIMO wavefield problem. The overall goal is to approximate the transfer functions from multiple sources to multiple receivers with a small reduced-order model that honors the physics of the underlying wave equation. The approach uses a coarse grid and low frequency interpolation points to build an RKS and to obtain smooth amplitude functions. Using high-frequency asymptotic expansions, this RKS is extrapolated to high frequencies and evaluated on a fine grid. The projection of a fine grid wave operator onto the extrapolated RKS gauges the ROM to the fine scale we intend to model. In this way, fine scale wave scattering and large scale wave propagation can be combined, which allows us to obtain a ROM valid for all time scales. RKS algorithms for wavefield problems are at a disadvantage compared {\color{black} with polynomial and extended Krylov subspace algorithms} when it comes to computational memory consumption as   the basis needs to be saved for RKS methods. The compression of the approximation space to a {\color{black} small number of} amplitudes and phases, however, leads to a reduction in the computational memory demand of the proposed method. 

In section~2, we start with a short discussion on the wave equation and formulate the wavefield problem of interest for a single-input/single output (SISO) configuration. Subsequently, we introduce a standard RKS in section~3 and 
construct field approximations in the frequency-domain. We show that this RKS approach is structure-preserving and that the transfer function of reduced-order models based on this RKS is a Hermite interpolant of the transfer function for a coinciding source-receiver pair. In section~4, we take the RKS approach of section~3 as a starting point and introduce the phase-preconditioned RKS for one-dimensional SISO configurations. {\color{black} We show that phase preconditioning is structure-preserving and retains the interpolation properties of standard RKS Galerkin projection.} The main result of this section is that for a piecewise constant wave speed profile, the new method yields the exact solution with the number of interpolation points equal to the number of homogeneous layers, i.e., this number plays the same role as the problem dimensionality in a conventional RKS approach. Section~5 discusses the algorithm for higher spatial dimensions in a MIMO setting using a block version of phase-preconditioned RKS. Finite-difference implementation via a two-grid algorithm is discussed in section~6. In section~7 we illustrate the performance of the proposed RKS techniques through a number of two-dimensional numerical experiments. Section~8 discusses the implementation of the proposed method on parallel computation architectures and the conclusions can be found in section~9. Throughout this manuscript, quantities in the time-domain are denoted in upright font, while quantities in the Laplace domain are written in italic.

\section{Problem formulation}
In this paper we address the problem of solving the Green's function for wave equations within a spectral interval of interest. We start the discussion by considering the scalar, isotropic, continuous wave equation on $\mathbb{R}^{k} \times [0, \infty[$
\be
\Delta {\rm u} - \frac{1}{\nu^2}{\rm u}_{tt} =- \frac{1}{\nu^2}\delta(t)\delta(x-x_\text{S}) , \quad {\rm u}|_{t=0}=0,\, {\rm u}_t|_{t=0}={\color{black}0.} \label{eq:contWEQ}
\ee 
In this equation, $\Delta$ denotes the $k$-dimensionalLaplace operator and the position vector is $x \in\RR^k$ ($1\le k\le 3$). {\color{black} Furthermore, $\nu( x)>0$} is a  wave speed distribution  in $L^\infty [\mathbb{R}^k] $, and ${\rm u}( x,t)$ is the wavefield with a compact support for all finite times.    

After Laplace transformation, equation \cref{eq:contWEQ} becomes
\be\label{eq:LaplaceWEQ}
\Delta u - \frac{s^2}{\nu^2} {u}=- \frac{1}{\nu^2}\delta (x-x_\text{S}),
\ee
where $s$ is the complex Laplace parameter with $\Re(s) \geq 0$. The Laplace domain wavefield $u$ satisfies the limiting absorption principle, i.e. $u$ vanishes at infinity for $\Re(s)> 0$ and converges to the solution of  Helmholtz's equation that satisfies the outgoing radiation condition as the Laplace parameter $s$ approaches the imaginary axis via the right-half of the complex $s$-plane. 

Let $\Omega$ be a bounded subdomain  of $\mathbb{R}^k$ such that $x_{\text{S}}\in \Omega$. We now equivalently reduce  the original  problem  on the unbounded domain to a problem on $\Omega$ by   
 considering equation~(\ref{eq:LaplaceWEQ}) in the weak formulation and testing this equation with a testing function~$p$. This gives
\be
\int_{\Omega} \overline{p}\left( \Delta - \frac{s^2 }{\nu^2} \right){u} \, {\text d} x =  -\frac{1}{\nu(x_\text{S})^2} \overline{p}(x_\text{S}),
\ee
where the overbar denotes complex conjugation. After integration by parts, we obtain
\be\label{eq:weakFormHelmholtzIntegral}
-\int_{\Omega} ( \nabla \overline{p}) \cdot (\nabla {u}) \, {\text d} x -  \int_{\Omega} \overline{p}\frac{s^2 }{\nu^2} { u} \, {\text d} x + \int_{\partial \Omega} \overline{p} \frac{\partial{u}}{\partial n}  \, {\text d} x =- \frac{1}{\nu(x_\text{S})^2} \overline{p}(x_\text{S}),
\ee
with $ \frac{\partial{{u}}}{\partial n}$ the derivative of ${u}$ in the direction of the outward-pointing normal on $\partial \Omega$. Finally, introducing the Dirichlet-to-Neumann (DtN) map $D(s)$ on $\partial \Omega$ such that $ \frac{\partial{{u}}}{\partial n} = D(s) {u}$, the above equation can be written as 
\be\label{eq:weakFormHelmholtzIntegral2}
-\int_{\Omega} (\nabla \overline{p})\cdot (\nabla {u}) \, {\text d} x -  \int_{\Omega} \overline{p}\frac{s^2 }{\nu^2} { u}  \,{\text d} x + \int_{\partial \Omega} \overline{p} D(s) {u}  \,{\text d} x = -\frac{1}{\nu(x_\text{S})^2} \overline{p}(x_\text{S}).
\ee
Without the boundary integral  (third term on the left-hand side of the above equation) this equation is linear is $s^2$; the DtN map, however, is a nonlinear function of frequency $s$ \cite{PML2016SiamReview}.\\

\noindent {\it Notation:}
To better draw similarities between continuous and discrete formulations, we will treat the  complex-valued functions $u$ and $p$ as vectors from $\RR^\infty$ in our linear algebraic derivations and  introduce the inner product
\be
p^H u=\int_{\Omega} \overline{p} {u} \, {\text d}x.
\ee
We note, that  $u$ and $p$ for $k>1$ have singularities at $x_\text{S}$ that may make this inner product divergent. To avoid this, we assume by default that instead of $\delta(x-x_\text{S})$ we have some regular approximation of the delta function.  After discretization, $u$ and $p$ become finite-dimensional vectors from $\RR^{N}$ and the issue of diverging integrals due to singularities disappears.
 In this notation, superscript $H$ denotes the Hermitian transpose for vectors and an inner product with complex conjugation for functions. 
Operators are printed with capital italic letters like $A$ and for linear combinations such as 
\[
q_m=\alpha_{1} g^{[1]} + \alpha_2 g^{[2]} + ... + \alpha_m g^{[m]},
\]
with coefficients $\alpha_i$ and expansion functions $g^{[i]}$, we write $q_{m}=G_{m} \bsz$ with $\bsz=[\alpha_1,...,\alpha_m]^T$ and the expansion functions are stored as columns in the function array~$G_{m}$, i.e.,  $G_{m}\in \RR^{\infty\times m}$ (Sometimes called quasimatrix \cite{Townsend20140585}).  Finally, finite-dimensional matrices are printed using a capital sans serif font (like $\bsA$).\\

\noindent Using the notation outlined above, we now introduce the wave operator $Q(s)$ to rewrite equation \cref{eq:weakFormHelmholtzIntegral2} as
\be\label{eq:weakFormHelmholtzVector}
 p^H Q(s) u = - \frac{1}{\nu(x_\text{S})^2} \overline{p}(x_\text{S}).
\ee
First, we note  that real and imaginary parts of $Q(s)$ are self-adjoint.
In the time-domain, the wavefield is obviously real-valued and consequently operator ${Q}(s)$ and the field ${u}(s)$ satisfy the  Schwarz reflection principle
\be\label{eq:symmetryAstreched}
{Q}(\overline{s})= \overline{{Q}(s)} \text{ and } {u}(\overline{s})= \overline{{u}(s)},
\ee
from which it immediately follows that the spectrum of ${Q}(s)$ is symmetric under complex conjugation. 

Global energy conservation for problem (\ref{eq:contWEQ}) leads to passivity of ${Q}(s)$, which can be defined via its nonlinear numerical range ${\cal W} \left\{ Q(s) \right\}$  (also known as nonlinear field of values), e.g., see \cite{barnett1983matrix},\cite{Druskin20123883}. Specifically, for a nonlinear operator-valued function $A(s)$, the nonlinear numerical range ${\cal W} \left\{ A(s) \right\}$ (e.g., see\cite{barnett1983matrix},\cite{Druskin20123883}) is defined as
\be
{\cal W} \left\{ A(s) \right\}= \left\{s\in \mathbb{C} : x^H A(s)x=0 \quad \forall x\in \mathbb{C}^k\backslash{{0}}  \right\}.
\ee
Passivity of dynamic system (\ref{eq:weakFormHelmholtzVector}) is equivalent to the condition
\begin{equation}
\label{passivity}\Re {\cal W} \left\{ Q(s) \right\} \le 0.
\end{equation}
In the following sections we discuss a reduced-order modeling technique that preserves the above mentioned symmetry properties, the Schwarz reflection principle, and passivity.

\section{Structure preserving rational Krylov subspace reduction}
As a first step towards an efficient rational Krylov methodology for multi-frequency wavefield problems, we construct field approximations or reduced-order models based on an interpolatory rational Krylov subspace containing single frequency solutions (snapshots) of the problem as trial and testing space. Specifically, our approach is to define an RKS of order $m$ as
\be
{\calK}^{m}(\kappa) = {\rm span} \left\{ {u}(s_1), {u}(s_2), \dots, {u}(s_m) \right\}
\ee
with $m$ distinct shifts $\kappa=\left[ s_1, \dots, s_m\right]$ and to use its real form, the RKS
\be
{\calK}_\text{R}^{2 m}(\kappa) = {\rm span} \left\{ \Re {\calK}^{m}(\kappa), \Im {\calK}^{m}(\kappa) \right\},
\ee
as an test and trial space. The real and imaginary parts of the snapshots $u(s_i)$ spanning ${\cal K}_\text{R}^{2m}(\kappa)$ are always linearly independent, since the eigenfunction expansion of the Dirac distribution appearing on the right-hand side of equation~(\ref{eq:LaplaceWEQ}) has an infinite number of terms. Furthermore, from the symmetry given in \cref{eq:symmetryAstreched} it follows that ${\calK}^{m}(\kappa) \subset {\cal K}_\text{R}^{2m}(\kappa)$ and ${\calK}^{m}(\overline{\kappa}) \subset {\cal K}_\text{R}^{2m}(\kappa)$ and a projection onto the subspace ${\cal K}_\text{R}^{2m}(\kappa)$ will therefore preserve the Schwarz reflection principle leading to real-valued, time-domain wavefield approximations. In the following subsections we will construct the reduced-order wavefield approximations, discuss their structure, and show the interpolation properties of these approximations.

\subsection{Reduced-order solution}
We start by approximating the weak solution of equation \cref{eq:weakFormHelmholtzVector} by an element from the space ${\cal K}_R^{2m}(\kappa)$. To this end, let the functions $v^{[1]}$, $v^{[2]}$, ...,$v^{[2m]}$ form a real basis $V_m \in \RR^{\infty \times 2 m}$ of ${\cal K}_R^{2m}(\kappa)$.  The reduced-order solution is now  expanded as $u_m=V_m \bsz$ with expansion coefficients $\alpha_i$ collected in vector $\bsz=[\alpha_1,...,\alpha_{2m}]^T$. These coefficients can be obtained from a standard Galerkin procedure defined through the weak form of   (\ref{eq:weakFormHelmholtzIntegral2}) leading to
\be\label{eq:ROMsolutionz}
 \bsz= [V_{m}^H Q(s) V_{m}]^{-1} V_{m}^H b \quad \text{or} \quad \bsz= R_m^{-1} (s)V_{m}^H b
\ee
with $b=-{\delta(x-x_\text{S})}/{\nu(x_\text{S})^2}$ and where $\bsR_m(s)$ is the $2m \times 2m$ reduced-order operator given by $\bsR_{m}(s)= V_{m}^H Q(s) V_{m}$. The reduced order model is structure preserving as show in the following proposition.

\begin{proposition} \label{prop:RKSstrucPre}
The reduced-order operator $\bsR_{m}(s)$ preserves the structure of the full-order operator $Q(s)$, that is, $\bsR_{m}(s)$ is symmetric, satisfies the Schwarz reflection principle $\overline{\bsR_{m}(s)}=\bsR_{m}(\overline{s})$ and its numerical range is contained in the numerical range of $\bsQ(s)$, that is $ {\cal W} \left\{ \bsR_{m}(s) \right\} \subseteq {\cal W} \left\{ Q(s) \right\}$.
\end{proposition}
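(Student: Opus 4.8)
The plan is to verify each of the three structural properties directly from the definition $\bsR_m(s) = V_m^H Q(s) V_m$, exploiting the fact that $V_m$ is a \emph{real} basis and that the corresponding properties are already known for $Q(s)$. First I would establish symmetry. Since the paper notes that the real and imaginary parts of $Q(s)$ are self-adjoint, we have $Q(s)^H = \overline{Q(s)}$, or equivalently $Q(s)$ is complex-symmetric in the sense that its real and imaginary parts are each Hermitian (here, since we work over functions with the given inner product, self-adjointness of the real and imaginary parts gives $Q(s)^T = Q(s)$ when $Q$ is regarded via real/imaginary splitting). Because $V_m$ has real-valued columns, taking the transpose of $\bsR_m(s) = V_m^H Q(s) V_m = V_m^T Q(s) V_m$ yields $\bsR_m(s)^T = V_m^T Q(s)^T V_m = V_m^T Q(s) V_m = \bsR_m(s)$, using symmetry of $Q(s)$ together with $V_m^H = V_m^T$ for real $V_m$. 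This is the routine part.

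Next I would check the Schwarz reflection principle. Again using that $V_m$ is real, conjugation passes through to $Q$ alone:
\[
\overline{\bsR_m(s)} = \overline{V_m^T Q(s) V_m} = V_m^T \,\overline{Q(s)}\, V_m = V_m^T Q(\overline{s}) V_m = \bsR_m(\overline{s}),
\]
where the middle equality invokes \cref{eq:symmetryAstreched}. The reality of the basis is exactly what makes this clean, which is the whole point of having passed from $\calK^m(\kappa)$ to its real form $\calK_\text{R}^{2m}(\kappa)$.

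The containment of numerical ranges is the step requiring the most care, and I expect it to be the main obstacle, mostly because of the definition of the nonlinear numerical range as a set of \emph{parameter values} $s$ rather than a set of scalars. I would argue as follows. Suppose $s \in {\cal W}\{\bsR_m(s)\}$; by definition there is a nonzero vector $\bsz \in \CC^{2m}$ with $\bsz^H \bsR_m(s)\bsz = 0$. Setting $w = V_m \bsz$, linearity of the construction gives
\[
w^H Q(s) w = \bsz^H V_m^H Q(s) V_m \bsz = \bsz^H \bsR_m(s)\bsz = 0.
\]
The only thing left to check is that $w \ne 0$, which follows because the columns of $V_m$ are linearly independent (as asserted in the excerpt, the real and imaginary parts of the snapshots are linearly independent), so $V_m$ has trivial kernel and $\bsz \ne 0$ forces $w \ne 0$. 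Hence $w \in \CC^\infty \setminus \{0\}$ witnesses $s \in {\cal W}\{Q(s)\}$, giving the inclusion ${\cal W}\{\bsR_m(s)\} \subseteq {\cal W}\{Q(s)\}$. The subtlety to flag is purely that the numerical range here lives in the $s$-plane, so the inclusion is a statement about which complex frequencies are captured; once the translation $\bsz \mapsto V_m\bsz$ is in hand the rest is immediate, and as a corollary passivity \cref{passivity} is inherited by $\bsR_m$.
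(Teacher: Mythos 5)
Your proof is correct and takes essentially the same route as the paper: symmetry and the Schwarz reflection principle via the reality of $V_m$ together with $Q(s)^T=Q(s)$ and $\overline{Q(s)}=Q(\overline{s})$, and the numerical-range containment by pushing a witness $\bsz$ forward to $V_m\bsz$. The only (welcome) addition is that you explicitly verify $V_m\bsz\neq 0$ from the linear independence of the basis columns, a point the paper leaves implicit.
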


\begin{proof}
The symmetry of $\bsR_m(s)$ follows from the symmetry of $Q(s)$ since $\bsR_m(s)=V_{m}^H Q(s) V_{m}=V_{m}^T Q(s)^T \bar{V}_{m}=\bsR_m(s)^T$, since $V_m$ is a real. The same argument shows that the Schwartz reflection principle holds as  $\bsR_m(\bar{s})=V_{m}^H Q(\bar{s}) V_{m}=V_{m}^H \bar{Q}(s) V_{m}=\bar{\bsR}_m(s)$.
Moreover, from the definition of the numerical range of the reduced-order we find that
\be
\bsx_m^H \bsR_{m}(s) \bsx_m = (V_{m} \bsx_m)^H Q(s) (V_{m} \bsx_m).
\ee
Now, every $s$ that satisfies $\bsx_m^H \bsR_{m}(s) \bsx_m = 0$ also satisfies $(V_{m} \bsx_m)^H Q(s) (V_{m} \bsx_m)=0$, such that every point in the numerical range of $\bsR_{m}(s)$ is also included in the numerical range of $Q(s)$.
\end{proof}
 
Thus, preservation of passivity~(\ref{passivity}) is  guaranteed by Proposition~(\ref{prop:RKSstrucPre}), so is the preservation of causality and stability. Therefore, existence and uniqueness of the reduced-order solution in (\ref{eq:ROMsolutionz}) is also guaranteed by Proposition~(\ref{prop:RKSstrucPre}), since it guarantees that $\bsR_{m}(s)$ is invertible for all Laplace parameters $s$ with $\Re(s) \ge 0$. Finally, we mention that the time-domain counterpart of ${u}_m(s)$ can be obtained by evaluating the inverse Laplace transform using quadrature rules \cite{Weideman2007}. 

We end this section by introducing an alternative way of representing the reduced-order solution, which will be useful in the development of phase-preconditioned RKS methods. In expansion form the reduced-order solution can be written as
\be\label{eq:ComplexConjExpansion}
u_m = \sum_{i=1}^{m} \begin{bmatrix} d_i \\ \delta_i \end{bmatrix}^T \begin{bmatrix} u(s_i)\\  u(\overline{s}_i) \end{bmatrix},
\ee
where the expansion coefficients $d_i$ and $\delta_i$ follow from the Galerkin condition. Due to the above mentioned linear independence of  the real and imaginary part of the snapshots, this representation is algebraically equivalent to $u_m=V_m \bsz$, i.e. there exists a transform from the $2m$ coefficients $\alpha_i$ to the coefficients $d_i$ and $\delta_i$ of equation~(\ref{eq:ComplexConjExpansion}).

\subsection{Interpolation Properties}
The standard theory of Galerkin interpolatory-projection model reduction of passive, self-adjoint, dynamic systems yields the following interpolation properties (e.g., see \cite{InterpolatoryModelReduction}).

\begin{proposition} 
\label{prop:SISOInterpolation}  The projected RKS solution  $u_{m}(s)$ interpolates at the shifts, i.e.,
\be
\label{eq:Interpolationprob}
u_{m}(s) = u(s) \quad\quad \forall s\in \kappa \cup \overline{\kappa}
\ee
and the SISO reduced-order transfer function ${f}_m(s)$ is a Hermite interpolant of the SISO transfer function ${f}(s)$ at the shifts, that is,
\be
{f}_{m}(s)={f}(s) \quad \text{and} \quad \frac{\rm d}{{\rm d} s}{f}_{m}(s)=\frac{\rm d}{{\rm d} s} {f}(s) \text{ with } s \in \kappa \cup \overline{\kappa}.\label{Hermite}
\ee
\end{proposition}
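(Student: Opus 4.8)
The plan is to place the statement inside the standard interpolatory-projection framework and to exploit two structural facts: the complex symmetry $Q(s)^{T}=Q(s)$ and the reality of both the source $b$ and the basis $V_m$. Together these make the trial and test spaces coincide, which is what upgrades the plain interpolation to the two-sided (Hermite) interpolation without having to build a separate dual Krylov space. The two assertions \cref{eq:Interpolationprob} and \cref{Hermite} would be proved in that order, the first feeding directly into the second.

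First I would establish the state interpolation \cref{eq:Interpolationprob}. Fix a shift $s_i\in\kappa$. By construction the exact snapshot $u(s_i)=Q(s_i)^{-1}b$ is one of the generators of ${\cal K}_R^{2m}(\kappa)$, so $u(s_i)\in\mathrm{range}(V_m)$ and $u(s_i)=V_m\bsy_i$ for some coordinate vector $\bsy_i$. Since $Q(s_i)u(s_i)-b=0$ exactly, we get $V_m^H\bigl(Q(s_i)u(s_i)-b\bigr)=0$, i.e. $\bsy_i$ solves the reduced Galerkin system \cref{eq:ROMsolutionz} at $s=s_i$. Proposition~\ref{prop:RKSstrucPre} guarantees $\bsR_m(s_i)$ is invertible, so the Galerkin solution is unique, forcing $\bsz=\bsy_i$ and hence $u_m(s_i)=u(s_i)$. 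The Schwarz reflection property of $Q$ and of the real projector then extends the equality to $s=\overline{s}_i$, covering all of $\kappa\cup\overline{\kappa}$.

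For the transfer-function claim \cref{Hermite} I would introduce the residual $r(s)=b-Q(s)u_m(s)$ and the error $e(s)=u(s)-u_m(s)$, which satisfy $Q(s)e(s)=r(s)$ and, by the Galerkin condition, $V_m^H r(s)=0$ for all $s$. Because the source and receiver coincide, $f(s)=b^H u(s)$ and $f_m(s)=b^H u_m(s)$; since $b$ is real, $b^H=b^T$, and using $Q(s)^{-T}=Q(s)^{-1}$ I can collapse the error identity to
\[
f(s)-f_m(s)=b^T e(s)=b^T Q(s)^{-1}r(s)=\bigl(Q(s)^{-1}b\bigr)^T r(s)=u(s)^T r(s).
\]
Evaluating at $s_i$ and using $r(s_i)=b-Q(s_i)u_m(s_i)=0$ (a consequence of the state interpolation just proved) yields $f_m(s_i)=f(s_i)$. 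For the derivative I would differentiate to obtain $\tfrac{d}{ds}(f-f_m)=(u')^T r+u^T r'$; the first term vanishes at $s_i$ because $r(s_i)=0$, while for the second I note that $V_m^T r(s)\equiv 0$ (real $V_m$ gives $V_m^H=V_m^T$) holds identically in $s$ and may therefore be differentiated, giving $V_m^T r'(s)\equiv 0$. Writing $u(s_i)=V_m\bsy_i$ then gives $u(s_i)^T r'(s_i)=\bsy_i^T V_m^T r'(s_i)=0$, so the derivative of the transfer-function error also vanishes at each $s_i$, with $\overline{\kappa}$ again supplied by Schwarz reflection.

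The step to watch is the careful bookkeeping of bilinear versus sesquilinear forms: the identity $f-f_m=u^T r$ collapses so cleanly only because $b$ and $V_m$ are real and $Q$ is complex symmetric, letting me replace $b^H$ by $b^T$ and $Q^{-H}$ by $Q^{-1}$ with no stray conjugations. This is exactly where the extra (derivative) order of accuracy comes from: for a general nonsymmetric system it would demand a separately constructed left/dual space, whereas here the self-duality of the symmetric Galerkin projection provides it automatically. I expect this reduction — rather than the differentiation of the orthogonality relation, which is routine — to be the point that must be argued, not assumed.
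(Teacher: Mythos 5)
Your proposal is correct and follows the same overall strategy as the paper: snapshot membership in the real subspace plus uniqueness of the passive Galerkin solution gives the state interpolation, and the bilinear identity $f(s)-f_m(s)=u(s)^T r_m(s)$ (which, via Schwarz reflection, is exactly the paper's $u^H(\overline{s})\,r_m(s)$) carries the Hermite claim. The only place you diverge is the last step: the paper inserts the Galerkin orthogonality once more to write the error as the product $e_m^H(\overline{s})\,r_m(s)$ of two factors that each vanish at the shifts, so the double zero (and hence the derivative condition) is read off without differentiating anything; you instead differentiate $u^T r_m$ directly and kill the surviving term $u(s_i)^T r_m'(s_i)$ by differentiating the identity $V_m^T r_m(s)\equiv 0$ and using $u(s_i)\in\mathrm{range}(V_m)$. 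Both finishes are valid; the paper's product form in equation~(\ref{eq:PropositionProof}) is slightly cleaner (no smoothness-in-$s$ bookkeeping needed and it generalizes immediately to the two-grid error estimate in~(\ref{eq:diff})), while your version makes explicit where the extra order of accuracy comes from, namely the self-duality of the symmetric projection. Your emphasis on the bilinear-versus-sesquilinear bookkeeping is well placed and matches the role the Schwarz reflection principle plays in the paper's argument.
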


\begin{proof}
 Since ${\cal K}^{m}(\kappa) \subset {\cal K}_R^{2m}(\kappa)$ and ${\cal K}^{m}(\overline{\kappa}) \subset {\cal K}_R^{2m}(\kappa)$, property (\ref{eq:Interpolationprob}) follows directly from the uniqueness of the Galerkin condition for passive problems. To prove (\ref{Hermite}),  we first introduce the field error and residual as
 \[
 e_m(s)=u(s)-u_m(s) \text{ and } r_m(s)=b -Q(s) u_m(s)
 \]
 respectively. From the Galerkin condition we obtain the relation
 \be
 \label{eq:GalerkinInterpol}
 u_m^H(\overline{s})  r_m(s) = 0,
 \ee
 since $u_m(\overline{s}) \in \mathcal{K}^{2m}_{\rm R}$. 
 The error of the transfer function can now be written as
 \begin{equation}
 {f}(s)-{f}_m(s) = b^H  e_m(s)=u^H(\overline{s}) Q(s)  e_m(s),
 \end{equation}
 where we have used Schwarz's reflection principle. Since $Q(s)  e_m(s) = r_m(s)$ and the Galerkin condition of equation~(\ref{eq:GalerkinInterpol}) holds, we can write 
 \be\label{eq:PropositionProof}
 {f}(s)-{f}_m(s) =u^H(\overline{s}) r_m(s) =e_m^H(\overline{s})  r_m(s),
 \ee
 which has double zeros at $s=\kappa \cup \overline{\kappa}$, since the error and residual vanishes for these frequencies due to relation~\cref{eq:Interpolationprob}.
\end{proof}

The outlined approach is most efficient if only a few singular Hankel values of the system contribute to the solution as is the case for resonating structures with a few excited and observable modes \cite{ParametricMOR}. Then the frequency-domain response is well-described by a low-degree rational function and a rational Krylov technique will therefore quickly capture the desired wavefield response. For waves characterized by large travel times; however, this may no longer be the case, since such responses are highly oscillatory in the frequency-domain and sampling should at least take place at half the Nyquist-Shannon sampling rate. As an illustration, consider a source-receiver pair with an arrival at $T^{\text{arr}}$ such that the source wavelet convolved with $\delta(t-{T^{\text{arr}}})$ is measured. In the Laplace domain this translates to multiplication by $\exp{ -sT^{\text{arr}} }$, which means that according to the Nyquist sampling theorem the maximum frequency-domain sampling distance is $\Delta s={\pi}/{T^{\text{arr}}}$ on the imaginary axis. Clearly, the number of required frequency-domain samples increases as the travel time increases leading to prohibitory large rational Krylov subspaces. In the next section we will incorporate travel time information to obtain basis functions that are less oscillatory to lower this sampling demand. 

\section{Field parametrization for SISO problems} 
To enhance the convergence of an RKS approach for travel time dominated structures, we need to incorporate travel time information into the Krylov subspace, and thus into our basis functions. To this end, we assume that variations of the medium take place on a scale much larger than the wavelength at the considered frequencies, since this allows us to use a geometrical optics ansatz. Every basis vector belonging to the RKS is now split into an incoming and an outgoing wave and for each of these waves we factor out a strongly oscillating phase term $\exp{\pm s T_{\text{eik}}}$, where $T_{\text{eik}}=T_{\text{eik}}(x)$ is the eikonal time that solves the eikonal equation 
$| \nabla T_\text{eik}(x)|^2= \frac{1}{\nu(x)^2}$. Splitting of the fields is realized using one-way wave equations. First we introduce this splitting for one-dimensional systems in section~\ref{sec:1Dsplitting} and then generalize to higher dimensions in section~\ref{sec:XDsplitting}.

\subsection{One-dimensional field parametrization}
\label{sec:1Dsplitting}
We decompose the field into an incoming and outgoing component by writing
\be\label{eq:parametrization1D}
u(s_j)=\exp{-s_j T_\text{eik} } c_\text{out}(s_j) + \exp{s_j T_\text{eik}} c_\text{in}(s_j).
\ee
For each component an oscillating phase term has been factored out and the amplitudes are determined from  the single frequency snapshot solutions $u(s_j)$ via one-way wave equations as
\begin{subequations}
\begin{align}
c_\text{out}(s_j)&=\frac{\nu}{2 s_j}  \exp{s_j T_\text{eik}} \left( \frac{s_j}{\nu} u(s_j)  -   \frac{\partial}{\partial |x-x_\text{S} |  } u(s_j) \right),\label{eq:Cout}
\intertext{and}
c_\text{in}(s_j) &=\frac{\nu}{2 s_j} \exp{- s_j T_\text{eik}} \left( \frac{s_j}{\nu} u(s_j)  +  \frac{\partial}{\partial |x-x_\text{S} | }  u(s_j) \right). \label{eq:Cin}
\end{align}
\end{subequations}
In equation~\cref{eq:Cout}, the incoming wave component of $u(s_j)$ is filtered out leaving an outgoing component for which outgoing oscillations can be factored out. In equation~\cref{eq:Cin} the situation is reversed and the outgoing component of $u(s_j)$ is filtered out. Finally, we note that using the above one-way wave equations for decomposition is equivalent to enforcing the condition
\be
\exp{ s_j T_\text{eik}} \frac{\partial}{\partial |x-x_\text{S} | } c_\text{out}(s_j) +  \exp{ - s_j  T_\text{eik}}  \frac{\partial}{\partial |x-x_\text{S} | } c_\text{in}(s_j) =0,
\ee
and the amplitudes $c_\text{out}$ and $c_\text{in}$ are spatially much smoother than the wavefield $u$, since the highly oscillatory phase term has been factored out. 

Now to obtain a field approximation at frequency $s$, instead of projecting our operator onto single frequency solutions $u(s_j)$, we project it onto the phase-corrected basis functions $\exp{-s T_\text{eik} } c_\text{out}(s_j) $ and $\exp{s T_\text{eik} }  c_\text{in}(s_j)$. This is the central idea of our approach, which preserves the interpolation properties of the RKS. In particular, by introducing the phase-preconditioned subspace as
\begin{align}
\begin{split}
\calK^{2 m}_\text{EIK}(\kappa,s) = \text{span} \{ &\exp{-s T_\text{eik}}  c_\text{out}(s_1),\dots, \exp{-s T_\text{eik} } c_\text{out}(s_m) ,\\
&\exp{s T_\text{eik}} c_{\text{in}}(s_1),\dots, \exp{s T_\text{eik}} c_\text{in}(s_m) \},
\end{split}
\end{align}
and its symmetry-preserving real form 
\be
{\calK}_\text{EIK;R}^{4 m}(\kappa,s) = {\rm span} \left\{ \Re {\cal K}_\text{EIK}^{2 m}(\kappa,s), \Im {\calK}_\text{EIK}^{2 m}(\kappa,s) \right\}
\ee 
we can construct reduced-order models in the usual way, but now in terms of frequency-{\it dependent} basis functions. More precisely, let
$M\le 4m$ be the dimension of  ${\calK}_\text{EIK;R}^{4 m}(\kappa,s)$  and let vectors $v^{[1]}(s)$, $v^{[2]}(s)$, ..., $v^{[M]}(s)\in \RR^\infty$ form an orthonormal basis of ${\calK}_\text{EIK;R}^{4 m}$, then the field approximation drawn from this subspace can be written as 
\be\label{eq:RKSExpansionAsymp}
u_m(s) = \sum_{i=1}^{M} \alpha_i(s) v^{[i]}(s)
\ee 
and the coefficients $\alpha_i(s)\in\CC$ can again be determined from the Galerkin condition. Note that $m$ denotes the number of snapshots used to construct ${\calK}_\text{EIK;R}^{4 m}$, while $M \leq 4m$ denotes the dimension of this subspace. The factor of 4 in the upper bound on $M$ is due to splitting into incoming and outgoing fields, that can lead to a twice as large approximation subspace compared with unpreconditioned RKS with the same shifts. However, as we shall see in subsection~\ref{SVD}, this increase can be circumvented in our implementation; the overall dimension of the preconditioned RKS is usually comparable to the dimension of a standard unpreconditioned RKS for the same accuracy, while using less snapshots.

With $V_{m;\text{EIK}}(s)\in \mathbb{R}^{\infty\times M}$ the real, orthonormal basis matrix of ${\calK}_\text{EIK;R}^{4 m}(\kappa,s)$, the reduced-order model that follows from the Galerkin condition can be written as a self-adjoint, time-invariant dynamic system
\be
\label{eq:ProjectionEik}
V_{m;\text{EIK}}(s)\bsR_{m;\text{EIK}}(s) V_{m;\text{EIK}}^H(s) u_{m;\text{EIK}}(s)=  b_m,
\ee
with
\be
b_m=V_{m;\text{EIK}}(s)V_{m;\text{EIK}}^H(s)b \quad \text{and} \quad \bsR_{m;\text{EIK}}(s) =V_{m;\text{EIK}}^H(s) Q(s) V_{m;\text{EIK}}(s). \nonumber
\ee
The reduced-order model of equation~\cref{eq:ProjectionEik} is the phase-corrected counterpart of the reduced-order model of equation~\cref{eq:ROMsolutionz}. Furthermore, since $\overline{c}_\text{out/in}(s_i)={c_\text{out/in}(\overline{s}_i)}$ holds because the Schwarz reflection principle is satisfied, we can also express the reduced-order model of equation~$\cref{eq:RKSExpansionAsymp}$ in terms of the amplitude functions $c_{\text{in}}(s)$ and $c_{\text{out}}(s)$ as (cf. equation~(\ref{eq:ComplexConjExpansion}))
\be
\label{eq:Conjugate1Dexpansion}
 u_m(s) = \sum_{i=1}^{m} \begin{bmatrix} a_i(s)\\ \alpha_i(s) \end{bmatrix}^T 
\begin{bmatrix}  \exp{-s T_\text{eik}} c_\text{out}(s_i)\\   \exp{-\overline{s} T_\text{eik}} c_\text{out}(\overline{s}_i) \end{bmatrix}+ \sum_{i=1}^{m} \begin{bmatrix} d_i(s)\\ \delta_i(s) \end{bmatrix}^T
\begin{bmatrix}  \exp{s T_\text{eik}} c_\text{in}(s_i)\\   \exp{\overline{s} T_\text{eik}}c_\text{in}(\overline{s}_i) \end{bmatrix}
\ee
with expansion coefficients $a_i$, $\alpha_i$, $d_i,$ and $\delta_i \in \CC$ and where we have assumed that $M=4m$. This formulation clearly shows that we use frequency independent amplitudes, preconditioned by frequency dependent phase functions, employing conjugation to preserve the symmetry of the wave equation. 

Finally, for field evaluations on the imaginary axis ($s\in i\mathbb{R}$) the above expansion can be written more compactly as 
\be\label{eq:Conjugate1Dexpansion_CinCout_Shorts}
 u_m(s) = \sum_{i=1}^{2m} 
 \begin{bmatrix} a_i(s)\\ \alpha_i(s)  \\ 
\end{bmatrix}^T 
\begin{bmatrix}  \exp{-s T_\text{eik}} c(s_i)\\   \exp{s T_\text{eik}} c(-s_i)  
\end{bmatrix} \quad \text{with $s \in i\mathbb{R}$}
\ee
and where $c(s_i)=c_\text{out}(s_i)$ for $i=1,2,...,m$ and $c(s_i)= \bar{c}_\text{in}(s_{i-m})$ for $i=m+1,...,2m$.

The following results show that the PPRKS retains the structure-preserving interpolatory-projection properties of standard RKS. 

\begin{lemma}
\label{lem:SISOstructure}
The system of (\ref{eq:ProjectionEik}) is structure preserving, i.e., $W\left\{ \bsR_{m;\text{EIK}}(s)\right\}\subseteq W\left\{ Q(s)\right\}$ on the range (column space) of $V_{m;\text{EIK}}(s)$. 
\end{lemma}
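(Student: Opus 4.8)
The plan is to reduce Lemma~\ref{lem:SISOstructure} to Proposition~\ref{prop:RKSstrucPre} by recognizing that the phase-preconditioned construction is, at each fixed frequency~$s$, nothing more than an ordinary real Galerkin projection onto the \emph{particular} real subspace ${\calK}_\text{EIK;R}^{4m}(\kappa,s)$ spanned by the columns of the orthonormal basis matrix $V_{m;\text{EIK}}(s)$. The essential point is that the numerical-range containment argument in the proof of Proposition~\ref{prop:RKSstrucPre} never used any special property of the subspace beyond the fact that it is spanned by a real basis matrix; it used only the algebraic identity $\bsx^H \bsR_{m;\text{EIK}}(s)\,\bsx = (V_{m;\text{EIK}}(s)\bsx)^H Q(s)\,(V_{m;\text{EIK}}(s)\bsx)$. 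Hence the argument transfers verbatim once we treat $s$ as a fixed parameter.

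First I would fix $s$ and recall that $\bsR_{m;\text{EIK}}(s) = V_{m;\text{EIK}}^H(s)\, Q(s)\, V_{m;\text{EIK}}(s)$ with $V_{m;\text{EIK}}(s)$ real. Then for any reduced vector $\bsx\in\CC^{M}$ I would set $w = V_{m;\text{EIK}}(s)\,\bsx$, which lies in the column space (range) of $V_{m;\text{EIK}}(s)$, and observe
\be
\bsx^H \bsR_{m;\text{EIK}}(s)\,\bsx = w^H Q(s)\, w.
\ee
If $s$ belongs to the nonlinear numerical range of the reduced operator, i.e. $\bsx^H \bsR_{m;\text{EIK}}(s)\,\bsx = 0$ for some nonzero $\bsx$, then $w\neq 0$ (since $V_{m;\text{EIK}}(s)$ has full column rank, its columns being an orthonormal basis) and $w^H Q(s)\, w = 0$, so $s\in{\cal W}\{Q(s)\}$. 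This establishes the containment ${\cal W}\{\bsR_{m;\text{EIK}}(s)\}\subseteq{\cal W}\{Q(s)\}$, restricted to test vectors in the range of $V_{m;\text{EIK}}(s)$, which is exactly the qualified statement of the lemma.

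The only genuinely new ingredient compared with Proposition~\ref{prop:RKSstrucPre} is that the basis $V_{m;\text{EIK}}(s)$ now depends on $s$, so the symmetry and Schwarz-reflection parts of the structure-preservation claim must be checked with this dependence in mind. I would note that reality of $V_{m;\text{EIK}}(s)$ follows from the construction of ${\calK}_\text{EIK;R}^{4m}$ as the span of real and imaginary parts, which is precisely why the real form was introduced; the symmetry $\bsR_{m;\text{EIK}}(s) = \bsR_{m;\text{EIK}}(s)^T$ then follows exactly as in the earlier proof from the symmetry of $Q(s)$ and the reality of the basis. The reflection principle $\overline{\bsR_{m;\text{EIK}}(s)} = \bsR_{m;\text{EIK}}(\overline{s})$ is the point requiring the most care, because it couples the $s$-dependence of $Q$ with that of the basis; I expect this to be the main obstacle. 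It should follow from the identity $\overline{c}_\text{out/in}(s_i) = c_\text{out/in}(\overline{s}_i)$ noted before equation~(\ref{eq:Conjugate1Dexpansion}), together with $\overline{\exp{-s T_\text{eik}}} = \exp{-\overline{s}\,T_\text{eik}}$ (as $T_\text{eik}$ is real), which together guarantee that $\overline{{\calK}_\text{EIK;R}^{4m}(\kappa,s)} = {\calK}_\text{EIK;R}^{4m}(\overline{\kappa},\overline{s})$ and hence that the projector onto the real subspace commutes appropriately with conjugation; combining this with $\overline{Q(s)} = Q(\overline{s})$ from \cref{eq:symmetryAstreched} yields the claim. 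Passivity $\Re{\cal W}\{\bsR_{m;\text{EIK}}(s)\}\le 0$ then follows immediately from the numerical-range containment together with the passivity~(\ref{passivity}) of $Q$, exactly as it did after Proposition~\ref{prop:RKSstrucPre}.
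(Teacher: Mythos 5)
Your proof is correct and uses essentially the same argument as the paper: the single identity $\bsy^H \bsR_{m;\text{EIK}}(s)\bsy = \bigl(V_{m;\text{EIK}}(s)\bsy\bigr)^H Q(s)\bigl(V_{m;\text{EIK}}(s)\bsy\bigr)$, which transfers the quadratic form to the column space of $V_{m;\text{EIK}}(s)$ and immediately yields the stated containment of numerical ranges. Your additional remarks on symmetry, the Schwarz reflection principle, and passivity go beyond what the lemma actually asserts (only the numerical-range containment restricted to the range of $V_{m;\text{EIK}}(s)$) and are not part of the paper's proof, but they introduce no error.
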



\begin{proof}
Let a nontrivial $\bsx_m$ be in the range of $V_{m;\text{EIK}}(s)$, that is $\bsx_m=V_{m;\text{EIK}}(s)\bsy_m$. Then
\[\bsy_m^H\bsR_{m;\text{EIK}}(s)\bsy_m =\bsy_m^H V_{m;\text{EIK}}^H(s) Q(s) V_{m;\text{EIK}}(s)\bsy_m= \bsx_m^H Q(s) \bsx_m.\]
\end{proof}
Thus, phase-preconditioned reduced order models can restrict the numerical range as $\bsx_m$ is in the range of $V_{m;\text{EIK}}(s)$; however, the spectrum is always contained in the projected operator.

\begin{proposition} 
\label{prop:SISOInterpolationP}  The SISO reduced-order transfer function retains the interpolation properties of the unpreconditioned RKS with the same shifts stated in Proposition~\ref{prop:SISOInterpolation}.\end{proposition}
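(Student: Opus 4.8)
The plan is to follow the proof of Proposition~\ref{prop:SISOInterpolation} line by line, the only new feature being that the trial/test space now carries the evaluation frequency $s$; accordingly, every orthogonality and symmetry step must be instantiated at a single frozen $s$. I would first establish the field interpolation $u_m(s)=u(s)$ at the shifts. The decisive observation is that the decomposition \cref{eq:parametrization1D}, read at the evaluation frequency $s=s_j$, asserts
\[
u(s_j)=\exp{-s_j T_\text{eik}}\,c_\text{out}(s_j)+\exp{s_j T_\text{eik}}\,c_\text{in}(s_j),
\]
and both summands are, by definition, basis vectors of $\calK_\text{EIK}^{2m}(\kappa,s_j)$. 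Hence $u(s_j)\in\calK_\text{EIK;R}^{4m}(\kappa,s_j)$, so the choice $u_m=u(s_j)$ produces a vanishing residual $r_m(s_j)=b-Q(s_j)u(s_j)=0$ and therefore solves the Galerkin equation $V_{m;\text{EIK}}^H(s_j)r_m(s_j)=0$; uniqueness, i.e. invertibility of $\bsR_{m;\text{EIK}}(s_j)$, is supplied by the structure preservation of Lemma~\ref{lem:SISOstructure} together with passivity. The crucial point is that the phase factored out of the basis is evaluated at exactly the shift, so the two amplitude components recombine into the snapshot; away from the shifts no such collapse occurs, which is precisely what makes the preconditioning effective.

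For the Hermite statement I would reproduce the chain leading to \cref{eq:PropositionProof}, but take care to use orthogonality only at the frequency at which the space is built. Writing $f(s)-f_m(s)=b^H e_m(s)$ and inserting $b=Q(\overline s)u(\overline s)$ together with $Q(\overline s)^H=Q(s)$ (symmetry plus Schwarz reflection) gives $f(s)-f_m(s)=u^H(\overline s)Q(s)e_m(s)=u^H(\overline s)r_m(s)$, and Schwarz reflection converts the conjugated pairing into the holomorphic bilinear one, $u^H(\overline s)=u(s)^T$. Now, instead of invoking orthogonality against $u_m(\overline s)$ — which lives in the space built at $\overline s$, not at $s$ — I would use that $V_{m;\text{EIK}}(s)$ consists of real-valued functions, so the Galerkin condition $V_{m;\text{EIK}}^H(s)r_m(s)=0$ is equally a statement about the non-conjugated pairing and yields $u_m(s)^T r_m(s)=0$. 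Subtracting this from the previous identity gives the quadratic-in-error representation
\[
f(s)-f_m(s)=e_m(s)^T r_m(s)=e_m(s)^T Q(s)\,e_m(s),
\]
valid for all $s$ in a neighborhood of each shift. Exactly as in Proposition~\ref{prop:SISOInterpolation}, a simple zero of $e_m$ forces a double zero of $f-f_m$, so differentiating at $s_j$ — where $e_m(s_j)=0$ by the first part — yields both $f_m(s_j)=f(s_j)$ and $f_m'(s_j)=f'(s_j)$.

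The step I expect to be the main obstacle is interpolation at the \emph{conjugate} shifts $\overline\kappa$. In the unpreconditioned case one fixed space $\calK_\text{R}^{2m}(\kappa)$ serves all frequencies and already contains $u(\overline s_j)=\overline{u(s_j)}$, so \cref{eq:Interpolationprob} extends to $\overline\kappa$ automatically. Here the space depends on the evaluation point, and repeating the recombination argument at $\overline s_j$ requires $u(\overline s_j)=\exp{-\overline s_j T_\text{eik}}\,c_\text{out}(\overline s_j)+\exp{\overline s_j T_\text{eik}}\,c_\text{in}(\overline s_j)$ to lie in $\calK_\text{EIK;R}^{4m}(\kappa,\overline s_j)$; using $\overline{c}_\text{out/in}(s_i)=c_\text{out/in}(\overline s_i)$, this again demands that the factored phase frequency coincide with the (now conjugate) shift. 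I would therefore carry out the verification under the natural assumption that the shift set is conjugate-symmetric, $\overline\kappa=\kappa$, in which case $\overline s_j$ is itself a shift and the first step applies verbatim; this is exactly the configuration that makes the real form closed under conjugation and the reduced model Schwarz-symmetric, hence real in the time domain. For a general, non-symmetric $\kappa$ the argument still delivers interpolation and Hermite matching at $\kappa$, but the automatic extension to $\overline\kappa$ enjoyed by the standard RKS is lost, and this discrepancy is the one place where the frequency dependence of the preconditioned basis genuinely changes the conclusion.
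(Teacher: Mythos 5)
Your proof is correct and follows the same route as the paper's, which is a three-line argument: assert ${\calK}_\text{EIK;R}^{4 m}(\kappa,s)\supset {\calK}_\text{R}^{2m}(\kappa)$ for $s\in\kappa\cup\overline{\kappa}$, invoke Lemma~\ref{lem:SISOstructure} for passivity and uniqueness, and declare that the proof of Proposition~\ref{prop:SISOInterpolation} applies. The value of your write-up is that it actually executes the two steps the paper only gestures at, and both of your refinements are substantive. Replacing the orthogonality against $u_m(\overline s)$ --- which lives in the space built at $\overline s$, not at $s$ --- by the bilinear orthogonality $u_m(s)^T r_m(s)=0$ deduced from the realness of $V_{m;\text{EIK}}(s)$ is exactly what is needed to push the Hermite chain through a frequency-dependent trial space, and it yields the clean identity $f-f_m=e_m^T Q\, e_m$. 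Your caveat about the conjugate shifts is likewise not a weakness of your argument but an imprecision in the paper's: the claimed containment is stronger than what the construction delivers. At $s=s_j$ the space contains $u(s_j)$ and also $u(\overline s_j)$ (via the conjugate pair appearing in equation~(\ref{eq:Conjugate1Dexpansion})), but it does not contain $u(s_i)$ for $i\neq j$; and at the evaluation point $s=\overline s_j\notin\kappa$ the phase $\exp{-\overline{s}_j T_\text{eik}}$ is paired only with the unconjugated amplitudes $c_\text{out}(s_i)$, so the snapshot $u(\overline s_j)=\exp{-\overline{s}_j T_\text{eik}}c_\text{out}(\overline s_j)+\exp{\overline{s}_j T_\text{eik}}c_\text{in}(\overline s_j)$ does not recombine and interpolation at $\overline\kappa$ does not follow from the recombination argument. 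Your resolution --- require $\overline\kappa=\kappa$, or state the guarantee only at $\kappa$ --- is the honest reading; since the paper places its shifts on the imaginary axis and always works with the conjugate-symmetric real form, nothing is lost in practice, but the proposition as literally stated is established by your argument only under that symmetry convention.
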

\begin{proof}By construction, ${\calK}_\text{EIK;R}^{4 m}(\kappa,s)\supset {\cal K}_R^{2m}(\kappa)$ when $s \in \kappa \cup \kappa^{\ast}$. According to Lemma~\ref{lem:SISOstructure}, the ROM is passive given that the Galerkin problem has a unique solution. Therefore, the proof of Proposition~\ref{prop:SISOInterpolation} applies.
\end{proof}

One of the motivations to use this method is the expected fast convergence, when the parametrization of \cref{eq:parametrization1D} is valid. In that case only a few phase-corrected, smooth amplitude functions $c_\text{out/in}$ are required to approximate the wavefield. Furthermore, in the RKS method discussed in the previous section, the number of required shifts or frequencies is dependent on the largest arrival time; however, in the phase-preconditioned RKS (PPRKS) discussed above, the arrival times are factored out and the number of shifts is dependent on the complexity of the wave speed model $\nu(x)$ rather than the largest arrival time. We make this explicit in the following proposition. 

\begin{proposition} 
\label{prop:exactness1D} Let an  1D problem have $\ell$ homogenous layers. Then there exist $m \leq \ell+1$ non-coinciding interpolation points, such that $u_{m;\text{EIK}}(s)=u$.
\end{proposition}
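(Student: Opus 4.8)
The plan is to exploit the fact that in a piecewise-constant medium the geometrical-optics ansatz \cref{eq:parametrization1D} is not merely asymptotic but \emph{exact}, so that the amplitude functions extracted by the one-way operators live in a fixed, low-dimensional space. First I would note that inside each homogeneous layer $j$ the Laplace-domain solution is a combination $a_j\exp{-sx/\nu_j}+b_j\exp{sx/\nu_j}$, and that on that layer $T_\text{eik}$ is affine in $x$ with slope $1/\nu_j$. Substituting this into \cref{eq:Cout}--\cref{eq:Cin} and cancelling the exponentials shows that $c_\text{out}(s)$ and $c_\text{in}(s)$ are \emph{constant in $x$ on each layer}; the one-way split leaves no WKB remainder precisely because $\nu$ is constant there. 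The key observation is that $T_\text{eik}=|x-x_\text{S}|/\nu$ has a kink at the source, so the natural pieces on which the amplitudes are constant are the $\ell+1$ subintervals cut out by the $\ell-1$ interfaces \emph{together with} the point $x_\text{S}$. Hence, for every $s$, both $c_\text{out}(\cdot\,;s)$ and $c_\text{in}(\cdot\,;s)$ belong to one and the same $(\ell+1)$-dimensional space $S$ of piecewise-constant functions.

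Next I would choose the shifts so that the snapshot amplitudes span $S$. Since the medium is fixed, the maps $s\mapsto c_\text{out}(\cdot\,;s)$ and $s\mapsto c_\text{in}(\cdot\,;s)$ are analytic curves (away from the resolvent poles) valued in the finite-dimensional space $S$; their linear spans $S_\text{out},S_\text{in}\subseteq S$ therefore have dimension at most $\ell+1$. An analytic curve that is not contained in a proper subspace is spanned by finitely many of its sample points, and $\dim S_\text{out}$ (respectively $\dim S_\text{in}$) generic samples already suffice. Picking $m\le\ell+1$ distinct shifts that avoid the finitely many degenerate configurations for \emph{both} curves at once then guarantees $\spanl\{c_\text{out}(s_i)\}_{i=1}^m\supseteq S_\text{out}$ and $\spanl\{c_\text{in}(s_i)\}_{i=1}^m\supseteq S_\text{in}$; non-coincidence of the shifts is a further open condition that is trivially compatible.

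The final step is a containment-plus-uniqueness argument. For an arbitrary evaluation frequency $s$ we have $u=\exp{-sT_\text{eik}}c_\text{out}(s)+\exp{sT_\text{eik}}c_\text{in}(s)$ with $c_\text{out}(s)\in S_\text{out}\subseteq\spanl\{c_\text{out}(s_i)\}$ and $c_\text{in}(s)\in S_\text{in}\subseteq\spanl\{c_\text{in}(s_i)\}$; multiplying by the frequency-dependent phases gives $u\in\calK_\text{EIK}^{2m}(\kappa,s)\subseteq\calK_\text{EIK;R}^{4m}(\kappa,s)$, so passing to the real form preserves containment. The exact field has identically zero residual and so trivially satisfies the Galerkin orthogonality; by Lemma~\ref{lem:SISOstructure} the projected operator inherits passivity and is therefore invertible, so the Galerkin solution is unique. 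Consequently the reduced solution must coincide with $u$, i.e. $u_{m;\text{EIK}}(s)=u$.

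The hard part will be making the dimension count airtight. Two points deserve care: verifying that the one-way decomposition really returns layerwise-constant amplitudes with no residual term (done above by direct substitution, but one must check that the split is applied with the correct orientation on each side of $x_\text{S}$ and that the source kink genuinely raises the piece count to $\ell+1$), and establishing the non-degeneracy claim that a single set of at most $\ell+1$ shifts simultaneously spans $S_\text{out}$ and $S_\text{in}$ rather than requiring separate shift sets. One should also confirm that the complex-conjugate/real-form bookkeeping behind $\calK_\text{EIK;R}^{4m}$ is consistent with the hypotheses of Proposition~\ref{prop:SISOInterpolationP}, so that passivity and the interpolation property can be invoked verbatim.
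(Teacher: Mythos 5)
Your proof is correct and follows essentially the same route as the paper: both show that the one-way amplitudes are piecewise constant on the $\ell+1$ pieces obtained by splitting the source layer, deduce by a dimension count that at most $\ell+1$ shifts span all attainable amplitudes, and conclude by containment of the exact solution in the phase-preconditioned subspace together with uniqueness of the Galerkin solution via passivity (Lemma~\ref{lem:SISOstructure}). Your genericity argument for selecting a \emph{single} shift set that simultaneously spans the spaces of $c_\text{out}$ and $c_\text{in}$ is in fact slightly more careful than the paper's contradiction argument, which treats the two families separately; it matches the paper's own follow-up remark that almost every choice of $\ell+1$ interpolation points works.
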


\begin{proof} We start by noting that if the regions to the left and to the right of the source are considered as separate layers, then the solution to the one-dimensional wave equation consists of a superposition of left- and right-going waves of constant amplitude in each of the $\ell+1$ layers. For one-dimensional problems, the decomposition direction coincides with the travel direction of the wave; thus, $c_\text{out}(\kappa)$ and $c_\text{in}(\kappa)$ are piecewise constant. A piecewise-constant function with $\ell+1$ layers can be exactly represented by at most $\ell+1$ linear-independent piecewise-constant functions with the same jump locations.
Let us prove from the opposite, i.e., assume, that there are no  $m \leq \ell+1$ noncoinciding shifts $\kappa_i$ such that $c_\text{out}(\kappa_i)$ form a basis for all possible  $c_\text{out}(\kappa)$. Then the number of  shifts $m$ yielding linear independent solutions should be less then $\ell+1$. But by assumption there must be at least a single $c_\text{out}(\kappa)$ not from the subspace. Then one can add this solution to the subspace, i.e., the true number of linearly-independent solutions is $m+1$, which contradicts the assumption that this number is $m$. Analogously, we can prove the same statement for $c_\text{in}(\kappa_i)$.

In conclusion, we proved that there exist $m \leq \ell+1$ noncoinciding shifts $\kappa_i$ such  that $c_\text{out}(\kappa_i)$ and $c_\text{in}(\kappa_i)$  form respective bases for all possible  $c_\text{out}(\kappa)$ and $c_\text{in}(\kappa)$,  i.e., the exact  solution $u$ will be in the projection subspace. Finally, due to Lemma~\ref{lem:SISOstructure} the exact solution $u$ will be the unique solution of the Galerkin problem. 
\end{proof}

The proposition can be extended to almost all arbitrary $\ell+1$ interpolation points, as the interpolation points that lead to $\ell+1$  linear-dependent functions have measure zero. Thus, phase-preconditioning allows us to obtain the exact solution with the number of interpolation points equal to the number of homogeneous layers, i.e., this number plays the same role as the problem dimensionality in a conventional RKS approach.

\subsection{Generalization to higher dimensions}
\label{sec:XDsplitting}

In higher spatial dimensions we again split the field in incoming and outgoing wave components and use a function $g(z)$ to factor out a strongly varying phase. Specifically, for two- and three-dimensional problems we write 
\be\label{eq:parametrizationXD}
u(s_j)=g(s_j T_\text{eik} ) c_\text{out}(s_j) + g(-s_j T_\text{eik}) c_\text{in}(s_j),
\ee
and project the problem onto the real and imaginary parts of the phase-preconditioned subspace
\begin{align}
\begin{split}
\calK^{2 m}_\text{EIK}(\kappa,s) = 
\text{span} 
\{
&g(s T_\text{eik} )  c_\text{out}(s_1),\dots, g(s T_\text{eik} )  c_\text{out}(s_m),\\
 &g(-s T_\text{eik}) c_\text{in}(s_1) ,\dots,g(-s T_\text{eik}) c_\text{in}(s_m)
 \},
\end{split}
\end{align}
where $g(z)=\exp{-z}/z$ in 3D, while $g(z)=K_0(z)$ in 2D, with $K_0$ the modified Bessel function of the second kind and order zero. The singular behavior of the field at the source location is factored out leading to a weaker dependence of the amplitude functions $c_\text{out/in}$ on the source location. 

In higher spatial dimensions, the field amplitudes are again obtained via one-way wave equations, but this time along the eikonal rays leading to decomposition directions $\pm \nabla T_{\text{eik}}$.  For 2D applications with $\Kb{0}{z}$ as incoming and $\Kb{0}{-z}$ as outgoing, we obtain the amplitude functions
\begin{align}
\label{eq:splitting1}
c_\text{out}(s_j) &= \frac{s_jT}{{\rm sign}(\Im(s_j)) i \pi} 
\left[\Kb{1}{-s_jT} u(s_j)  -  \Kb{0}{-s_jT} \frac{v^2}{s_j}\nabla T \cdot \nabla u(s_j) \right]
\intertext{and}
\label{eq:splitting2}
c_\text{in}(s_j) &= \frac{s_jT}{{\rm sign}(\Im(s_j)) i \pi} 
\left[\Kb{1}{s_jT} u(s_j)  +  \Kb{0}{s_jT} \frac{v^2}{s_j}\nabla T \cdot \nabla u(s_j) \right].
\end{align}
Analogous to the one-dimensional reduced-order solution of equation~\cref{eq:Conjugate1Dexpansion}, we can write the reduced-order solution in higher spatial dimensions as
\be
\label{eq:ConjugateNDexpansion2}
 u_m = \sum_{i=1}^{m} \begin{bmatrix} a_i(s)\\ \alpha_i(s) \end{bmatrix}^T 
\begin{bmatrix}  g(s T_\text{eik}) c_\text{out}(s_i)\\   g(\overline{s} T_\text{eik}) c_\text{out}(\overline{s}_i) \end{bmatrix}+ \sum_{i=1}^{m} \begin{bmatrix} d_i(s)\\ \delta_i(s) \end{bmatrix}^T
\begin{bmatrix}  g(-s T_\text{eik}) c_\text{in}(s_i)\\   g(-\overline{s} T_\text{eik})c_\text{in}(\overline{s}_i) \end{bmatrix},
\ee
where the coefficients follow from the Galerkin condition.

Lemma~\ref{lem:SISOstructure} and  Proposition~\ref {prop:SISOInterpolationP} can be straightforwardly extended to the multidimensional case. However, Proposition~\ref{prop:exactness1D} is not directly extendible to the multidimensional case. As opposed to the one-dimensional case, a decomposition direction does not necessarily coincide with the travel direction of the wave and the field parametrization may be poor in such cases. This problem can be resolved, however, by considering MIMO wavefield systems with multiple sources and receivers, since in this case we have a decomposition direction for each source and the span of these directions may properly capture the propagation direction of the waves. In the next section, we therefore focus on wavefield systems with multiple sources and multiple receivers. The problem may be additionally complicated by multivalued solutions of the eikonal equation. In most situations it is sufficient  to use the rays corresponding to the minimal travel time; however, as we shall see for the case of internal resonant structures in section~\ref{sec:resonant},  it can be beneficial to split the subspace along multiple rays.

\section{Phase-preconditioning for MIMO systems}
\subsection{Formulation of a block method}
The time-domain equations governing MIMO systems are given by
\be
\label{eq:contWEQMIMO}
\Delta {\rm u}^{[l]} - \frac{1}{\nu^2}{\rm u}^{[l]}_{tt} = -\frac{1}{\nu^2}\delta(t)\delta(x-x_\text{S}^l) , \quad\quad {\rm u}|_{t=0}=0,\, {\rm u}_t|_{t=0}=0, 
\ee 
on $\mathbb{R}^{k} \times [0, \infty[$, where the superscript $l$ is the source index with $l=1,\dots,N_\text{src}$. The weak formulation of the corresponding $s$-domain equations is (cf. equation~\cref{eq:weakFormHelmholtzVector}) 
\be
\label{eq:HelmHoltzMIMO}
p^H Q(s) u^{[l]} =- \overline{p}(x_\text{S}^l) \frac{1}{\nu(x_\text{S}^l)^2}, 
\quad \text{for $l=1,2,...,N_{\text{src}}$}.
\ee
Assuming possibly coinciding source-receiver pairs, we can define the source/receiver array as $B_\text{s}=[b^{[1]},\dots,b^{[N_\text{src}]}]$, with individual source contributions $ b^{[l]}=-{\delta(x-x_\text{S}^l)}/{\nu^2(x_\text{S}^l)}$ as columns. Equivalently, we define the array containing the fields as $U_\text{s}(s)=[u^{[1]}(s), \dots, u^{[N_\text{src}]}(s)]$ to write the MIMO equation of (\ref{eq:HelmHoltzMIMO}) as
\be
P^H Q(s) U_\text{s}= P^HB_\text{s}.
\ee
Finally, we define the MIMO transfer function of size $N_\text{src} \times N_\text{src}$ as
\be
F(s)=B_\text{s}^H U_\text{s}(s),
\ee
which is symmetric due to reciprocity of the wavefields. To introduce the reduced-order transfer function, we define a block rational Krylov subspace
\be
{\calK}^{m N_\text{src}}_\text{B}(\kappa) = {\rm span} \left\{ U_\text{s}(s_1),U_\text{s}(s_2), \dots, U_\text{s}(s_m) \right\}
\ee
and its real counterpart containing ${\calK}^{m N_\text{src}}_\text{B}(\kappa) $ and ${\calK}^{m N_\text{src}}_\text{B}(\overline{\kappa}) $ given by
\be
{\calK}_\text{B;R}^{2 m N_\text{src}}(\kappa) = {\rm span} \left\{ \Re {\cal K}_\text{B}^{m N_\text{src}}(\kappa), \Im {\calK}_\text{B}^{m N_\text{src}}(\kappa) \right\}.
\ee
The reduced-order model for the fields can now be constructed completely analogous to the SISO case. Specifically, with $V_{m}$ a basis array that spans ${\cal K}_\text{B,R}^{2 m N_\text{src} }(\kappa)$, we have   
\be
\label{eq:ROM_RKSMIMO}
U_{\text{s};m}(s)=V_{m} \bsR_{m}(s)^{-1} V_{m}^H B_s
\quad \text{with} \quad 
\bsR_{m}(s) = V_{m}^{H} Q(s) V_m.
\ee
The reduced-order transfer function now follows as 
\be
F_m(s)=B_\text{s}^H U_{\text{s};m}(s),
\ee
and it is straightforward to show that the MIMO reduced-order transfer function $F_m(s)$ is a Hermite interpolant of the MIMO transfer function $F(s)$. The proof of this statement is completely analogous to the proof of \cref{prop:SISOInterpolation}.

To formulate the phase-corrected extensions of the block-RKS method, we note that the block-RKS field approximation $u_m^{[l]}(s)$ due to a source $l$ can be written as
\be
u_m^{[l]}(s) = 
\sum_{r=1}^{N_\text{src}}  
\sum_{i=1}^{m} 
\begin{bmatrix} 
a^{ [l] }_i \\ 
\alpha^{ {[l]}}_i 
\end{bmatrix}^T 
\begin{bmatrix}  u^{[r]}(s_i)\\ 
u^{[r]}(\overline{s}_i) 
\end{bmatrix},
\ee
with $s_j \in \kappa$. In other words, the field approximation $u_m^{[l]}(s)$ due to source $l$ is a linear combination of single frequency solutions from \emph{all} sources. A straightforward generalization of phase-preconditioning to MIMO systems is to use a field approximation $u_m^{[l]}(s)$ that is a linear combination of phase-corrected incoming and outgoing fields from \emph{all} sources. We write the field approximation as
\be
u_m^{[l]}(s)=
\sum_{r=1}^{N_\text{src}} 
\Bigg(
\sum_{j=1}^{m} 
\begin{bmatrix} 
a^{{[l]}}_{r j} \\ 
\alpha^{{[l]}}_{r j} 
\end{bmatrix}^T
\begin{bmatrix}
g(s T_\text{eik}^{[r]}) c_\text{out}^{[r]}(s_j) \\
g(\overline{s} T_\text{eik}^{[r]})c_\text{out}^{[r]}(\overline{s_j}) \\
\end{bmatrix}+
\sum_{j=1}^{m} 
\begin{bmatrix} d^{{[l]}}_{r j} \\ \delta^{{[l]}}_{r j} \end{bmatrix}^T
\begin{bmatrix}
g(-s T_\text{eik}^{[r]}) c_\text{in}^{[r]}(s_j) \\
g(-\overline{s} T_\text{eik}^{[r]}) c_\text{in}^{[r]}(\overline{s_j}) \\
\end{bmatrix}\Bigg),
\ee
where $T_\text{eik}^{[r]}$ is the eikonal solution corresponding to the $r$th source. The coefficients $a^{[l]}_{rj},\alpha^{[l]}_{rj}$ and $d^{[l]}_{rj},\delta^{[l]}_{rj}$ are found via the block-Galerkin condition. For $N_\text{src}>1$, this approach accounts for multi-directional scattering by representing the field as a linear combination of phase-corrected functions with multiple directions $\nabla T_\text{eik}^{[r]}$. 

The idea that we followed to justify the use of block Krylov methods is that the field caused by one source contains information about the field caused by a source with a different location (and frequency). In the context of phase-preconditioning, we can apply this idea a second time to obtain a block-preconditioned algorithm. This means that instead of using the phase-correction function $g( T_\text{eik}^{[r]} )$ to only correct $c_\text{out}^{[r]}(s_j)$ for each source location $r$ separately, we cross combine all phase functions $g(s T_\text{eik}^{[r_2]} )$ with all amplitudes $c_\text{out}^{[r_1]}(s_j)$ so that the index $r$ splits into $r_1$ and $r_2$. This leads to a field approximation $u^{[l]}_m (s)$ due to the $l$th source given by
\be\label{eq:Plan22}
u_m^{[l]}(s)=\sum_{r_2=1}^{N_\text{src}}  \sum_{r_1=1}^{N_\text{src}} \Bigg(
\sum_{j=1}^{m} 
\begin{bmatrix} a^{{[l]}}_{r_1 r_2 j} \\ \alpha^{{[l]}}_{r_1 r_2 j} \end{bmatrix}^T
\begin{bmatrix}
g(s T_\text{eik}^{[r_2]}) c_\text{out}^{[r_1]}(s_j) \\
g(\overline{s} T_\text{eik}^{[r_2]})c_\text{out}^{[r_1]}(\overline{s_j}) \\
\end{bmatrix}+
\sum_{j=1}^{m} 
\begin{bmatrix} d^{{[l]}}_{r_1 r_2 j} \\ \delta^{{[l]}}_{r_1 r_2 j} \end{bmatrix}^T
\begin{bmatrix}
g(-s T_\text{eik}^{[r_2]}) c_\text{in}^{[r_1]}(s_j) \\
g(-\overline{s} T_\text{eik}^{[r_2]}) c_\text{in}^{[r_1]}(\overline{s_j}) \\
\end{bmatrix}\Bigg).
\ee
The expansion coefficients are found from the block-Galerkin condition. Basis vectors in this expression can be viewed as a tensor-product of the amplitudes $c_\text{out}^{[r_1]}$ and the phase terms $g(-\overline{s} T_\text{eik}^{[r_2]})$, while the Hadamard product is used spatially.

Similarly to the multidimensional SISO case, Lemma~\ref{lem:SISOstructure} and  Proposition~\ref {prop:SISOInterpolationP} can be straightforwardly extended for the multidimensional MIMO case. Our experiments  presented in section~\ref{experiments2D} also indicate that the number of interpolation points needed for multidimensional MIMO configurations is dependent on the complexity of the wavespeed model and not the largest travel time (as proven for the one-dimensional case in Proposition~\ref{prop:exactness1D}).

\subsection{SVD truncation of the expansion amplitudes}\label{SVD}
In the case of many sources and receivers it is possible to compress the field amplitudes $c_\text{out}^{[r_1]}(s_j)$ and $c_\text{in}^{[r_1]}(s_j)$ a posteriori using a thin singular value decomposition. Using this SVD we compress the reduced-order model and remove redundancy in the expansion of equation~\cref{eq:Plan22}. In the phase-preconditioned approach redundancy of the basis occurs in two ways. First, smooth amplitude functions corresponding to different frequencies can be close to linear dependent due to the frequency dependent basis vectors in (\ref{eq:Plan22}). Second, the amplitude functions of multiple sources can be similar which leads to redundancy once we use cross combinations of amplitudes and phase-functions as in equation~\cref{eq:Plan22}. The original block Krylov basis does not have these redundancies.

To realize SVD truncation, the amplitudes $c_\text{in}^{[r_1]}(s_j)$ and $c_\text{out}^{[r_1]}(s_j)$ are first normalized in pairs to have unit Euclidean norm. To be more specific,  $c_\text{in}^{[r_1]}(s_j)$ and $c_\text{out}^{[r_1]}(s_j)$ are normalized by $$\sqrt{ ||c_\text{in}^{[r_1]}(s_j)||^2 + ||c_\text{out}^{[r_1]}(s_j)||^2}$$ such that the sum of the square singular values is $2 m N_\text{src}$.
In this way the ratio between the incoming and outgoing amplitude is preserved. The SVD of the $2 m N_\text{src}$ incoming and outgoing amplitudes is then computed separately and truncated after $M^\text{out}_\text{SVD}$ and $M^\text{in}_\text{SVD}$ left singular vectors to obtain the compressed amplitudes $c_\text{in;SVD}^{j}$ and $c_\text{out;SVD}^{j}$. The original amplitudes $c_\text{in}^{[r_1]}(s_j)$ and $c_\text{out}^{[r_1]}(s_j)$ are associated with a specific frequency $s_j$ and source $r_1$, whereas the compressed amplitudes $c_\text{in;SVD}^{j}$ and $c_\text{out;SVD}^{j}$ are associated with a singular value. The amplitudes are therefore no longer associated with a source or frequency and the corresponding subscripts are dropped and replaced by the singular value index $j$. The resulting reduced-order solution expressed in terms of these compressed amplitudes is given by
\be \label{eq:finaldecomp}
u_m^{[l]}(s)=\sum_{r=1}^{N_\text{src}} \Bigg(
\sum_{j=1}^{M^\text{out}_{\text{SVD}} } 
\begin{bmatrix} a^{{[l]}}_{r j} \\ \alpha^{{[l]}}_{r j} \end{bmatrix}^T
\begin{bmatrix}
g(s T_\text{eik}^{[r]}) c_\text{out;SVD}^{j} \\
g(\overline{s} T_\text{eik}^{[r]}) \overline{c}_\text{out;SVD}^{j} \\
\end{bmatrix}+
\sum_{j=1}^{M^\text{in}_{\text{SVD}} } 
\begin{bmatrix} d^{{[l]}}_{r j} \\ \delta^{{[l]}}_{r j} \end{bmatrix}^T
\begin{bmatrix}
g(-s T_\text{eik}^{[r]})  c_\text{in;SVD}^{j} \\
g(-\overline{s} T_\text{eik}^{[r]})  \overline{c}_\text{in;SVD}^{j} \\
\end{bmatrix}\Bigg),
\ee
where $M^\text{out/in}_{\text{SVD}} \ll m N_\text{src}$. 

If we contract the outgoing amplitudes and the conjugate of the incoming amplitudes into one amplitude basis $c^j_\text{SVD}$, compute the SVD after pairwise normalization and evaluate on the imaginary line {\color{black}($s\in i\mathbb{R}$)} we can expand the field as
\be
\label{eq:finaldecomp_compact}
u_m^{[l]}(s)=\sum_{r=1}^{N_\text{src}}
\sum_{j=1}^{M_{\text{SVD}} } 
\begin{bmatrix} 
a^{ {[l]}}_{r j} \\ 
\alpha^{ {[l]}}_{r j}\end{bmatrix}^T
\begin{bmatrix}
\phantom{-}g(s T_\text{eik}^{[r]})  c_\text{SVD}^{j} \\
g(-s T_\text{eik}^{[r]}) \overline{c}_\text{SVD}^{j} 
\end{bmatrix} 
\quad \text{with $s \in i\mathbb{R}$}
\ee
where $M_\text{SVD} \ll m N_\text{src}$. 

Here the $c^j_\text{SVD}$ are first $M_\text{SVD}$ left singular vectors of 
\[[\bar{c}_\text{in}^{[r_1]} \,\,\, {c}^{[r_1]}_\text{out} ]/\sqrt{||{c}^{[r_1]}_\text{in}||^2 + ||{c}^{[r_1]}_\text{out}||^2 }.\]
In our numerical experiments we show that the singular values of the contracted amplitudes {\color{black}$c_\text{SVD}^{j}$} decay much faster than the ones of the block Krylov basis. In Figure~\ref{fig:SVDSingVal} we plotted the decay of singular values of a matrix with pairwise normalized vectors $c_\text{SVD}^j$ and normalized vectors $u^{[r_1]}(s_j)$ as columns. Each SVD trace is normalized to the largest singular value to emphasize the decay. The singular values associated with $c_\text{SVD}^j$ show a strong decay with a plateau at the level of the finite-difference error, whereas the singular values associated with the wave field solutions $u^{[r_1]}(s_j)$ barely show any decay before reaching {\color{black}the} Nyquist sampling rate.

The compressibility of the amplitudes confirms that the chosen parametrization of the {\color{black}wavefield} is valid. Phase-preconditioning enhances the convergence of RKS not by increasing the subspace but by preconditioning a small basis of problem specific amplitudes in dependence of the evaluation frequency using phase functions. In our experiments we show that the number of contributing amplitudes is only weakly dependent on the number of sources.

The compression of the amplitudes offers {\color{black}several} advantages. {\color{black}First,} it significantly reduces the cost of evaluating the reduced-order model, since it reduces the amount of inner products that need to be computed to obtain the reduced-order operator. {\color{black}Second,} the cost associated with communicating and storing the reduced-order model is reduced as well. The compressed amplitude basis is only very weakly dependent on the source locations used to construct it. Therefore, we can reduce the number of sources (right-hand sides) for which the basis vectors need to be computed, since their response can be approximated from their eikonal travel time and the basis vectors computed from other sources. It is in line with our effort to reduce the computation at every stage of our algorithm.

\section{Discrete formulation}
In this section we consider the discrete implementation of the introduced reduced-order modeling technique. Discretization and selection of the grid accuracy are addressed first, followed by a discussion on how we handle numerical dispersion. 

\subsection{Finite difference discretization}
Our basic approach is to solve equation~\cref{eq:weakFormHelmholtzIntegral2} (restated here as equation~\cref{eq:weakFormHelmholtzIntegral2Restated}) using phase-corrected single frequency solutions as expansion functions and to obtain a reduced-order solution for a complete spectral interval of interest via the Galerkin condition. 

We consider {\color{black} a rectangular domain} $\Omega\in \mathbb{R}^k$ with constant $\nu(x)$  on $\mathbb{R}^k\setminus \Omega$ and discretize the equation
\be\label{eq:weakFormHelmholtzIntegral2Restated}
\int_{\Omega} \nabla \overline{p}\cdot \nabla {u} {\text d} \Omega -  \int_{\Omega} \overline{p}\frac{s^2 }{\nu^2} { u}  {\text d} \Omega + \int_{\partial \Omega} \overline{p} D(s) {u}  {\text d} {\partial \Omega} = -\frac{1}{\nu(x_\text{S})^2} \overline{p}(x_\text{S}),
\ee
using finite differences to obtain a linear shifted system with a matrix nonlinearly depending on  $s$. Discretization of the first two terms in the above equation using a second-order accurate finite difference scheme with constant step sizes is straightforward. To discretize the third term, we approximate the DtN map $D(s)$ using nearly-optimal discrete perfectly-matched {\color{black}layers} (PML) according to \cite{PML2016SiamReview}.  The optimal Zolotarev rational approximants used for the PML construction make the size the of finite-difference problem (necessary for  accurate approximation of $D(s)$) in $\mathbb{R}^k\setminus \Omega$ negligible compared to the grid in $\Omega$'s interior. In 2D for instance, the resulting equations that need to be solved in the PML can be solved efficiently with a block-cyclic {\color{black}solver} \cite{CyclicReference}, or with a band solver after sorting the PML system to a bandwidth of $2k+1$ (in 2D).  This discretization leads to the matrix equation 
\be\label{eq:weakFormHelmholtzVectorDiscrete}
 \bsp^H \bsQ(s) \bsu = - \bsp^H \bsb,
\ee
with $\bsb$ the discrete approximation of the scaled delta function. The matrix $\bsQ(s)$ of order $N$ inherits all properties of the continuous operator $Q(s)$ and thus follows the Schwarz reflection principle, is symmetric in a bi-linear form ($\bsQ(\overline{s})$ is the adjoint of $\bsQ(s)$ in the Hermitian inner product) and has a nonlinear numerical range in the left-half of the complex $s$-plane. The single frequency solutions $\bsu(s_j)$ needed to build the rational Krylov subspace can be obtained using iterative solvers or Gaussian elimination. The eikonal equation $|\nabla T|^2=\frac{1}{\nu^2}$ is solved on the same grid using a fast marching method~\cite{SethianFFM}. The associated computational cost is negligible with respect to the cost of solving the Helmholtz equation.\\

\subsection{Realization on two grids}
For smooth media, the amplitudes $c_\text{out}^{[r_1]}$ and $c_\text{in}^{[r_1]}$ are smooth functions of the spatial coordinates, since the highly oscillatory part of the frequency-domain wavefield is factored out together with the source singularity. Therefore, linear combinations of these amplitudes can form reasonable approximations to amplitude distributions at higher frequencies. Consequently, the phase-corrected reduced-order models can extrapolate to frequencies outside the convex hull of the interpolation points.  Building a reduced-order model that extrapolates to higher frequencies has the advantage that the amplitudes can be determined on grids that are much coarser than {\color{black}grids} required by a direct method at these high frequencies. This significantly reduces the computational cost of solving the Helmholtz equation to obtain the amplitude functions $c_\text{out/in}$, since the main cost of the algorithm is associated with solving shifted systems.

To be specific, let $\bsQ_{\text{fine}}(s)$ and $\bsQ_{\text{coarse}}(s)$ denote the matrix operators obtained by discretizing the wave operator~$Q(s)$ on a fine and coarse grid, respectively. Writing the transfer function and field approximations obtained with these fine and coarse grid operators as $F_{m}(s)$ and $\bsU_{m}(s)$ and $F_{\text{c};m}(s)$ and $\bsU_{\text{c};m}(s)$, respectively, we have      
\be
\label{eq:diff}
F_{m}(s)-F_{\text{c};m}(s)= [\bsU_{m}(s)-\mathsf{P}_\text{interp}\bsU_{\text{c};m}(s)]^H \bsQ_{\text{fine}}(s) [\bsU_{m}(s)-\mathsf{P}_\text{interp}\bsU_{\text{c};m}(s)],
\ee
which is essentially a block version of equation~(\ref{eq:PropositionProof}). $\bsU_m$ and $\bsU_{\text{c};m}$ don't live on the same grid such that the interpolation matrix $\mathsf{P}_\text{interp}\in \RR^{N_\text{fine} \times N_\text{coarse}}$ makes the upper equation consistent. In this case, however, $\bsU_{m}(s)-\mathsf{P}_\text{interp}\bsU_{\text{c};m}(s)$ signifies the difference between the fine and coarse grid field solutions and the interpolation property as presented in  Proposition~\ref{prop:SISOInterpolation} obviously does not hold here. In other words, using a coarse grid for construction and a fine grid for projection leads to a loss of the interpolation property of the reduced-order model. On the other hand, we do increase the accuracy of the coarse transfer function as the errors introduced by interpolation and the coarse grid solution get squared at the interpolation points $s\in\kappa \cup \overline{\kappa}$.  

The main drawback of using coarser grids is that the numerical dispersion error increases and the analytic phase term $\exp{ \pm sT_\text{eik}}$ does not match the phase term of $\bsu(s)$ for large imaginary shifts. Fortunately, we can correct for this phase mismatch. To be precise, in the analytic case the phase term $\exp{-sT_\text{eik}}$ is used to cancel the {\color{black}high-frequency} dominant term $-s^2/\nu^2$ in the wave equation. To guarantee that this cancellation takes place in the discrete case and to match the discrete and analytic phase, we introduce the discrete finite-difference gradient matrix~$\bsD_{x_i}$ (see e.g. \cite{MimeticChew}) in all spatial directions $i=1,\dots,k$ and adjust the wave speed model from $\nu$ to $\nu'$, where $\nu'$ follows from the requirement    
\be
\exp{2s \bsT_\text{eik}^{[l]}} \sum_{i=1}^{k}|\bsD_{x_i}\exp{-s \bsT_\text{eik}^{[l]} }|^2=\frac{s^2}{{\nu'^{[l]}}^2}.
\ee
This is the discrete counterpart {\color{black}of} the (continuous) relation $\exp{2s T_\text{eik}} | \nabla \exp{-s T_\text{eik}}|^2=\frac{s^2}{\nu^2}$. This equation ensures that the {\color{black}high-frequency} dominant term $-s^2/\nu^2$ vanishes and the numerical dispersion error is minimized. Obviously, the discrete Laplace operator used to obtain {\color{black}equation~(\ref{eq:weakFormHelmholtzVectorDiscrete})} should be consistent with the discretization of $\bsD_{x_i}$ and is thus given by $\sum_{i=1}^{k} \bsD_{x_i}^T \bsD_{x_i}$.

{\color{black}Finally, we note that this dispersion correction is only accurate in the dominant direction of $\nabla \bsT_\text{eik}^{[l]}$ and only works in the reduced-order modeling framework.} In our block approach multiple directions are taken into account by incorporating multiple source locations. Therefore, errors occurring in the directions orthogonal to $\nabla \bsT_\text{eik}^{[l]}$ are corrected in the block projection framework by projection onto sources with different dominant directions.

\section{Two-dimensional experiments}\label{experiments2D}
In this section we illustrate the performance of the developed solution methods using three different two-dimensional numerical experiments. In our first set of experiments, we show the performance of the proposed preconditioning technique for wavefields in a smooth layered configuration. We simulate the same structure with and without grid coarsening to show the effects of both concepts. As a second example, we consider a non-smooth medium with jumps in the wave speed profile to illustrate that the effectiveness of preconditioning decreases as the high-frequency geometrical optics argument is no longer valid. However, the method {\color{black}still exhibits excellent approximation properties} even for non-smooth media. Finally, in the third experiment, a configuration with a resonant cavity present in a smooth geology is considered.

\subsection{A geophysical structure with a smooth wave speed profile}
To illustrate the effect of phase-preconditioning, we consider the smoothed geophysical structure illustrated in Figure~\ref{fig:ConfigurationMarmousi}. This model is obtained by smoothing a layered section of the acoustic Marmousi model \cite{bourgeois1991marmousi} with a Hanning window of width $h_{\rm Han}=200$~m leading to a discretized model of order $N=4\cdot 10^5$. Five coinciding source-receiver pairs are placed at the top boundary, where a perfectly reflecting boundary condition is imposed to model a water-air interface. 
A Ricker wavelet with a maximum in its spectrum at $\omega^{\text{peak}}=8~{\rm Hz}$ (13~ppw at 1\% cut-off frequency) is used as a source signature and a near optimal eight-layer PML \cite{PML2016SiamReview} is applied on the remaining outer boundaries to simulate outward wave propagation towards infinity. Finally, a fast marching method \cite{SethianFFM} is adopted to obtain the eikonal solution for this configuration. 
The true solution $\mathsf{u}^{[4]}$ corresponding to the fourth source from the left at the frequency corresponding to 14.6~ppw is visulized in Figure~\ref{fig:u14ppw}. This solution shows ``diving wave behavior'' and a caustic can be seen at a depth of about 750~m in the left half of the configuration. The real part of the outgoing amplitude {\color{black}$\mathsf{c}^{[4]}_\text{out}$} is depicted in Figure~\ref{fig:cout14ppw}, and {\color{black}is clearly spatially much smoother than} the original wavefield. Reflections of the wavefield can easily be identified in this amplitude plot. 

The overall time-domain errors of the block-RKS and preconditioned block-RKS reduced-order models without grid coarsening are shown in Figure~\ref{fig:ConvergenceSmoothLayers}, where we used a 500-point Fourier method to obtain a comparison solution. The overall time-domain {\color{black}error} is defined by the ratio of the r.m.s. error of all traces and the r.m.s. of the signals over \emph{all} traces. Preconditioning the RKS method significantly decreases the number of interpolation points needed to reach certain error level. To obtain an error of one percent in the time-domain, for example, the RKS algorithm need about 80 interpolation points, while only 10 interpolation points are required in the phase-preconditioned algorithm. This fast convergence is due to the construction of the WKB-like field approximations at high frequencies in PPRKS, which already provide an accurate approximation of the Green's function at high frequencies and in smooth structures as considered in this example.  

The real part of the transfer function of the leftmost source to the rightmost receiver is shown in Figure~\ref{fig:FieldExampleSmoothlayers} for structure-preserving and preconditioned block-RKS reduced-order models of order $m=20$. The phase-preconditioned model coincides with the comparison solution on the complete frequency interval of interest.  
The main oscillations present in this Greens function response are due to the direct arrival of the wave and its first reflection from the salt layer located at a depth of about 2500~m. Typically, the PPRKS method provides a smooth approximation to the field response showing only small errors in the amplitudes or at highly oscillatory reflections. The structure-preserving RKS method, on the other hand, overshoots after every interpolation point causing spiking behavior as can be clearly seen in Figure~\ref{fig:FieldExampleSmoothlayers}. 

\begin{figure}[!]
\centering
\subfigure[Section of the wave speed profile of the smoothed Marmousi model.]{\label{fig:ConfigurationMarmousi}\includegraphics[width=0.47\textwidth]{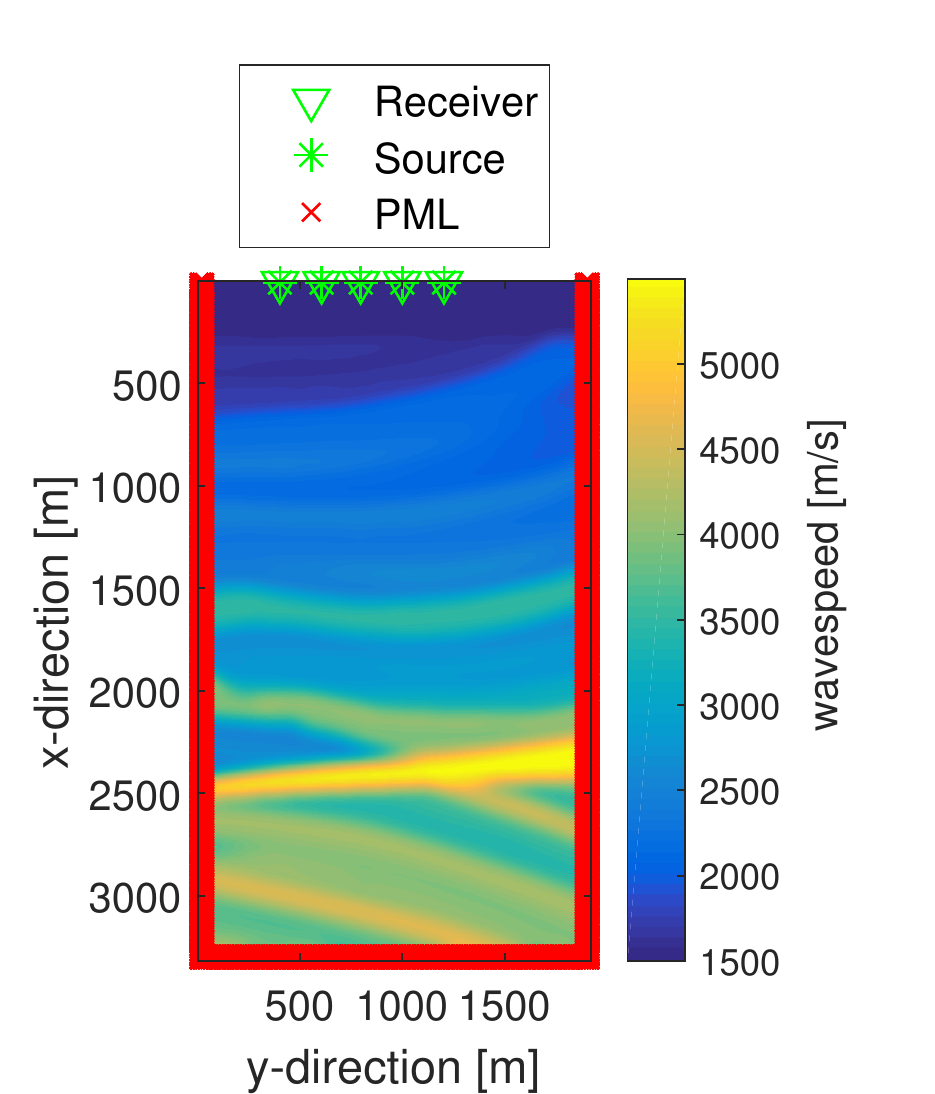}}
\subfigure[Real part of the wavefield $\mathsf{u}^{[4]}$ excited by the fourth source from the left at a frequency corresponding to 14.6~ppw.]
{\label{fig:u14ppw}\includegraphics[trim=0mm -10.5mm 0mm 0mm,clip,width=0.25\textwidth]{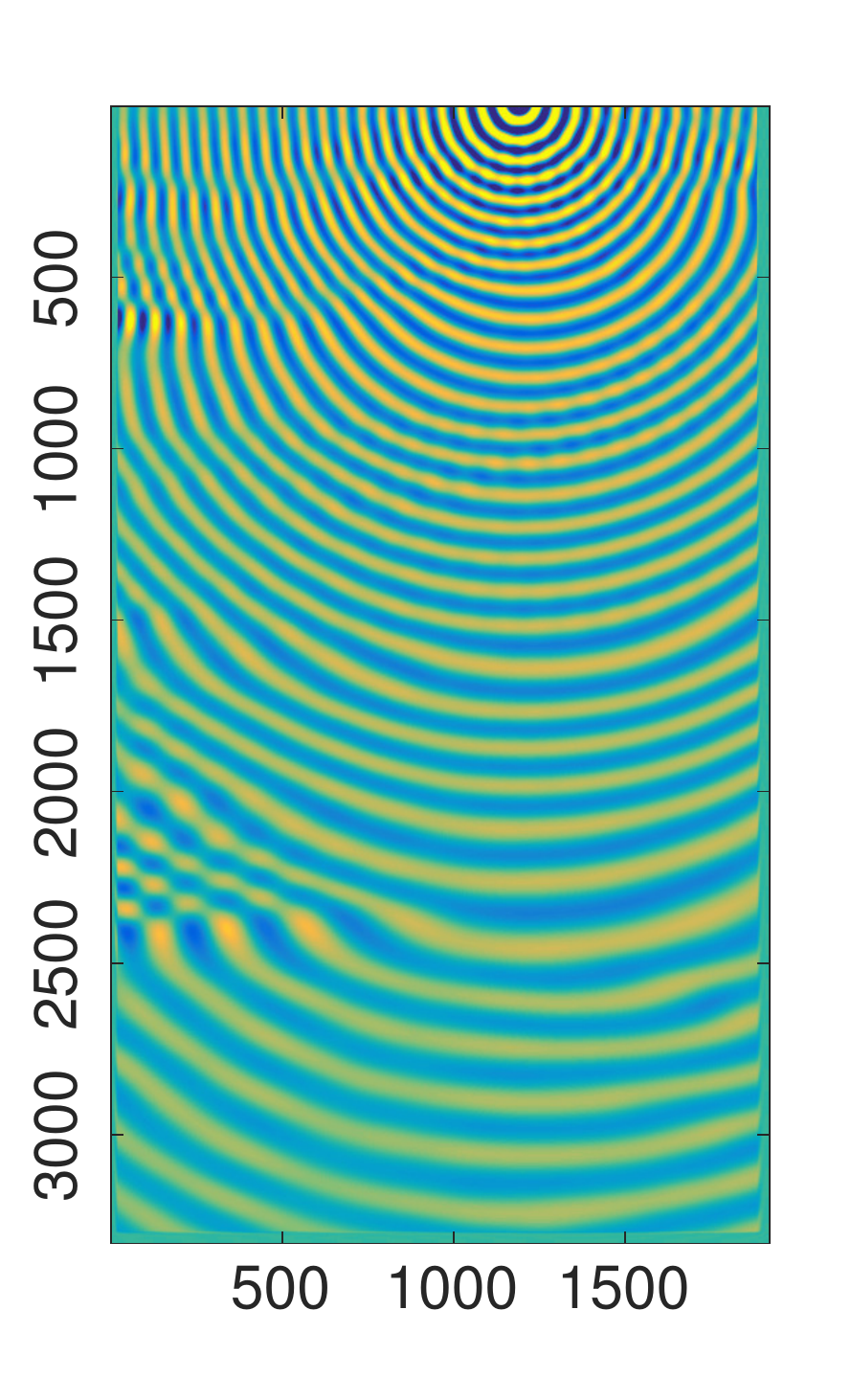}}
~
\subfigure[Real part of the amplitude $\mathsf{c}^{[4]}_\text{out}$ excited by the fourth source from the left at a frequency coresponding to 14.6~ppw.]{\label{fig:cout14ppw}\includegraphics[trim=0mm -10.5mm 0mm 0mm,clip,width=0.25\textwidth]{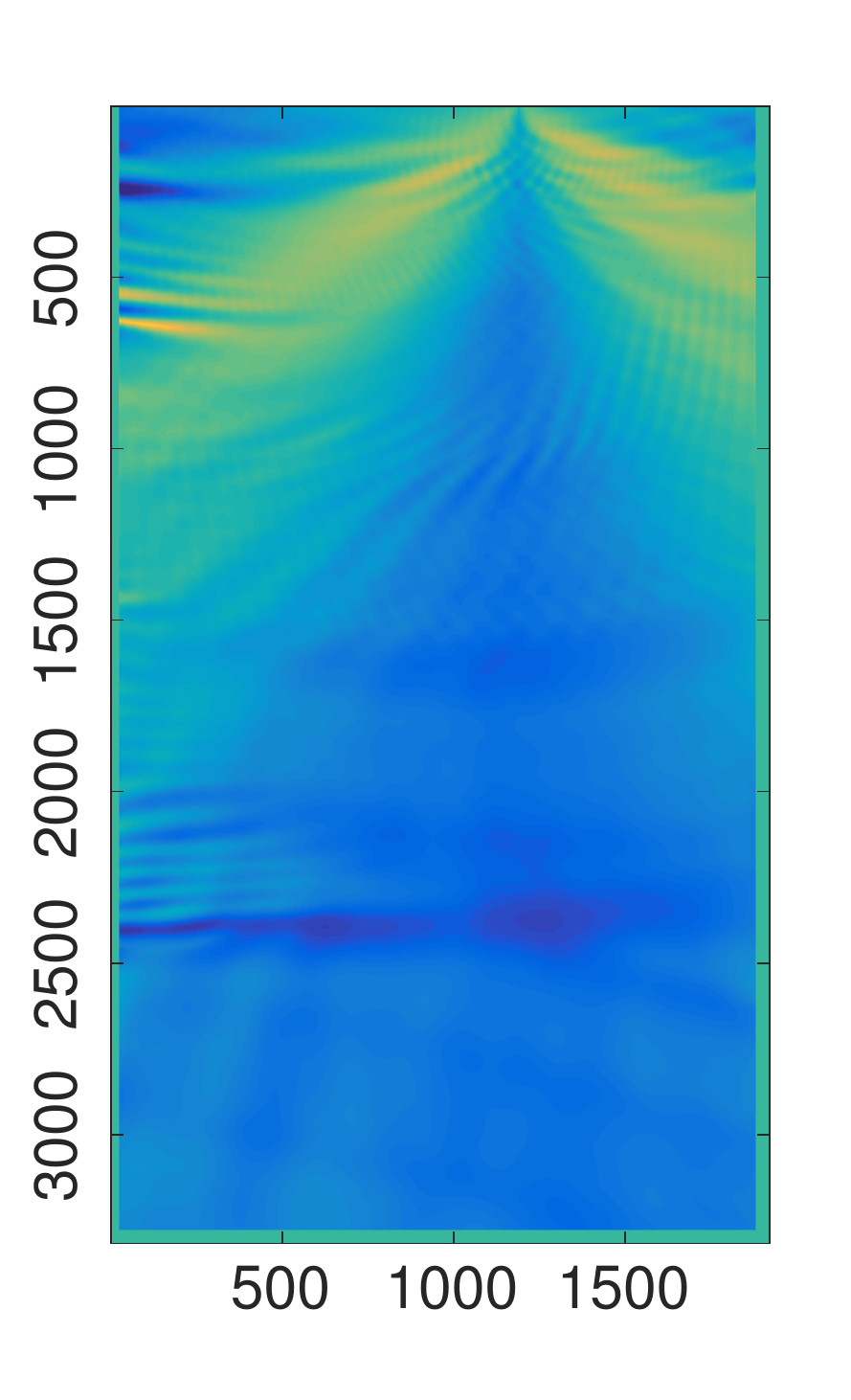}}
~
\subfigure[Time-domain convergence of {\color{black}RKS and PPRKS}. The {\color{black}overall} time-domain error is shown using a Ricker wavelet for excitation.]{\label{fig:ConvergenceSmoothLayers}\includegraphics[width=0.45\textwidth]{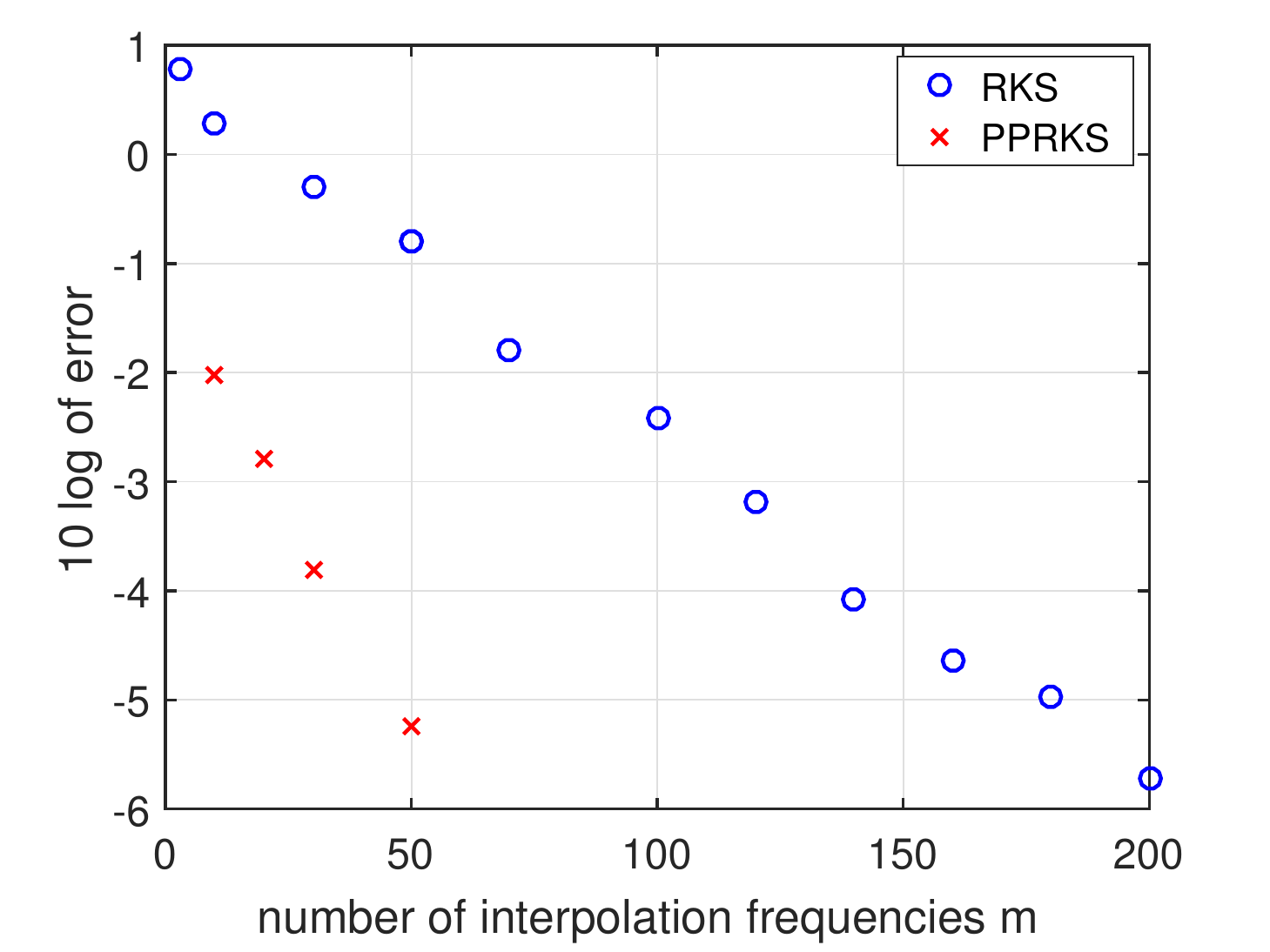}}
~
\subfigure[Real part of the frequency-domain transfer function from the leftmost source to the rightmost receiver after $m=20$~interpolation points.]{\label{fig:FieldExampleSmoothlayers}\includegraphics[width=0.45\textwidth]{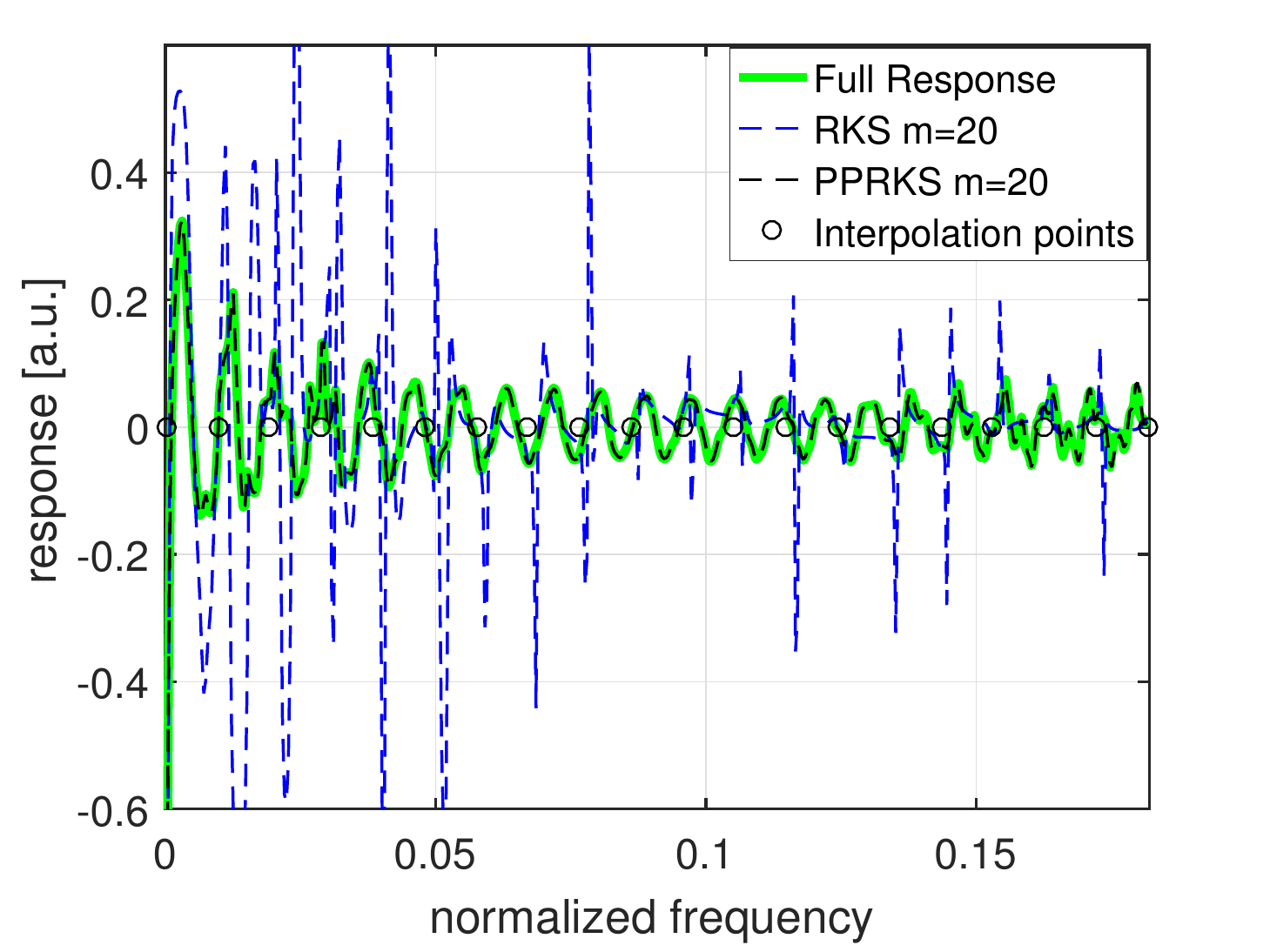}}
\caption{The smoothed Marmousi layers test configuration.}\label{fig:SmoothLayersConfigurationOverall}
\end{figure}

\subsubsection{Grid coarsening and SVD} The amplitude functions $c_{\text{in}}$ and $c_{\text{out}}$ are spatially much \linebreak smoother than the wavefield and therefore we expect that a coarser spatial grid can be employed. To investigate the effects of grid coarsening, we consider the same wave speed profile as in the previous example and place 12 coinciding source-receiver pairs at the top water-air interface instead of 5. For excitation, we use a modulated Gaussian pulse with a center frequency $\omega^{\text{peak}}$ and its support essentially given by $[0,2\omega^{\text{peak}}]$. The pulse is shifted in time such that it starts at $t=0$. Spatial discretization is now chosen such that we have about 5.5 points per smallest wavelength, where the wavelength corresponds to the center frequency of the pulse and 2.7~ppw at the cut-off frequency of the pulse. With this choice, the step sizes of the grid are four times larger than in the previous example leading to a system that is sixteen times smaller with $N=2.5 \cdot 10^4$ unknowns. Using such a coarse grid to model wavefields without phase-preconditioning is obviously insufficient, but here we expect that the smoothness of the amplitude functions $c_{\text{in}}$ and $c_{\text{out}}$ allows us to use a much coarser grid. During the evaluation of the reduced-order model we project an operator corresponding to a fine grid onto the phase-corrected RKS. For this example, we choose a fine operator using half the step size compared to the previous operator in order to show that the projection gauges the ROM to the operator used during projection.

For MIMO systems with grid coarsening, we define the error as the error averaged over all source-receiver combinations. We denote the elements of the finite difference matrix transfer function $F_{\rm F}(s)$ {\color{black}by
${f}_{\rm F}^{[ij]}(s)= b^{[i];H} {u}^{[j]}(s)$, while the element of the ROM transfer function $F_m(s)$ are given by ${f}_{m}^{[ij]}(s)= b^{[i];H} {u}_{m}^{[j]}(s)$. Having introduced these elements, the average MIMO error as function of frequency is defined as}
\be\label{eq:MOMOErrDef}
\text{err}_\text{ROM}^{\rm average}(m,s) =\frac{\sqrt{\omega_\text{max}}}{N_\text{src}^2} \sum_{j=1}^{N_\text{src}} \sum_{i=1}^{N_\text{src}}
 \frac{ \displaystyle \big|{ f}^{[ij]}_{\text{F}}(s) - { f}^{[ij]}_{m}(s)\big| \, }{\left( \displaystyle \int_{\omega=0}^{\omega_{\rm max}} \big|{ f}^{[ij]}_{\text{F}}(i \omega)\big|^2\, \text{d}\omega \right)^{1/2} }.
\ee
It is assumed that the comparison solution ${\rm f}^{[ij]}_{\text{F}}(t)$ is computed with a spatial discretization of sufficient accuracy. This averaged frequency domain error definition gives a higher error, yet delivers more insight, than computing the overall error. The overall error is dominated by the mono static elements ${f}_{\rm F}^{[ii]}$, whose direct arrival contains most energy and is well approximated. {\color{black}Furthermore, the above error definition} allows us to study the error as a function of frequency.

The phase-corrected RKS is build using $m=40$ equidistant shifts on the imaginary axis from $\omega=2.4\cdot 10^{-3}\omega^{\text{peak}}$ (2383 points per smallest wavelength) to $\omega=1.1 \omega^{\text{peak}}$ (5 points per wavelength). In other words, the RKS interpolation frequencies uniformly cover the lower half of the support of the spectrum of the source wavelet. With $m=40$ interpolation points and $N_{\text{src}}=12$ source-receiver pairs, we have 480 amplitude functions $c_{\text{in}}$ and {\color{black}an additional} 480 amplitude functions $c_{\text{out}}$. Computing the SVD of the 960 amplitude functions $[c_{\text{out}} \,\,\, \bar{c}_{\text{in}}]$, we observe that for this example, essentially only the first 100 singular functions contribute to the reduced-order {\color{black}model} for the contracted amplitudes. We therefore use a truncated SVD that uses the first 100 SVD basis functions to represent the amplitudes. 

The resulting time-domain trace from the leftmost source to the rightmost receiver is shown in Figure~\ref{fig:TD12_S1_R12_coarse} compared to the trace obtained via a 500-point Fourier method using an operator with step sizes eight times smaller than the step sizes used in the coarse operator. Both responses clearly coincide on the considered time window and the first arrival of the pulse, the complex interaction between the pulse and the upper layered medium, and the reflection of the pulse at the high contrast salt layer around $t=3000$ can be observed. The multiple reflection from source to salt layer, water/air interface, salt layer and back to the receiver can be seen around $t=6000$. 
 
In Figure~\ref{fig:SVDSingVal} we plotted the decay of the singular values of a matrix with pairwise normalized vectors {\color{black} $c_{\text{out}}^{[r_1]}(s_j)/\bar{c}_\text{in}^{[r_1]}(s_j)$} and normalized $u^{[r_1]}(s_j)$ as columns. 
The decay of the singular values of the single frequency solutions that make up the RKS is shown in black. The singular values associated with the amplitude matrix are shown in blue. Prior to the SVD the vectors $u^{[r_1]}$ were normalized, such that the square of the singular values sum up to $m N_{\text{s}}$. The vectors $c_\text{out}^{[r_1]}(s_j)$ and $\bar{c}_\text{in}^{[r_1]}(s_j)$ were normalized in pairs together to reflect the ratio of the incoming and outgoing wave at each frequency. Their SVD is computed together such that the sum of the squares of all singular values adds up to $m N_{\text{s}}$ aswell. Finally, to show the decay in singular values we normalize the largest singular value to one for each of the shown curves. 
This figure clearly shows that incoming and outgoing amplitudes are significantly compressible, whereas the RKS vectors are not. The singular values associated to RKS drop by less than a factor of 2 between the index of 50 and the index 400, indicating that the RKS can hardly be compressed. To show that after the compression the basis is {\color{black}essentially} independent of the number of sources, we computed the SVD of the amplitudes $[c_\text{out}^{[r_1]}(s_j) \,\,\, \bar{c}_\text{in}^{[r_1]}(s_j)]$ for $N_\text{src}=12,24,48 \text{ and } 96$. The number of (normalized) singular values larger than 0.01 is shown in Table~\ref{tab:SingularVal}. It shows that the number of contributing singular vectors is {\color{black}basically} independent of the number sources, so are the {\color{black}left-hand side singular vectors}.

\begin{table}[]
\centering
\caption{Number of (normalized) singular values larger than 0.01 in dependence of the number of sources}
\label{tab:SingularVal}
\begin{tabular}{|l|c|c|c|c|}
\hline
$N_\text{src}$	& 12 &24 & 48& 96\\
\hline
    Number of (normalized) singular values $>0.01$ for  $[c_\text{out},\bar{c}_\text{in}]  $  &  69&72&73&73\\ 
  Number of (normalized) singular values $>0.01$ for $u $  & 457 &833&1369&1741\\ 
   $m\cdot N_\text{src}$ &480&960&1920&3840\\
   \hline
\end{tabular}
\end{table}

In Figure~\ref{fig:MarmousiSmoothCoarseErrTF}, the averaged error $\text{err}_\text{ROM}^{\rm average}$ over all $12^2$~traces is shown along with the interpolation points used in the construction of the reduced-order model. The same Fourier method that was used to compute the comparison solution in Figure~\ref{fig:TD12_S1_R12_coarse} is used here to compute the errors in the transfer function. Furthermore, the figure shows the error of an FDFD method with a normalized step size of 0.6, which is 20\% larger than the step size used to compute the comparison solution and used for the operator that was projected onto the PPRKS. For higher frequencies such an operator has increasing dispersion, such that the solutions between the comparison FDFD method with normalized step size of 0.5 and the one of 0.6 don't match anymore. For low frequencies there is a small discrepancy due to the inability of both grids to approximate a delta source.

When introducing grid coarsening, the ROM no longer interpolates the transfer function, but the error remains small and below 1\% on the frequency interval covered by the interpolation points. In addition, we observe that the phase-preconditioned reduced-order models can extrapolate to higher frequencies to a certain extent, since the basis in PPRKS is frequency-dependent. The error only gradually grows outside the interpolation interval, which covers the lower half of the spectrum of the pulse, and at 2.4 points per smallest wavelength we end up with an error of about 5\%.

Finally, in Figure~\ref{fig:ComparisionROMtoCoarseGrid} the averaged error in the transfer function is shown as a function of the number of points per wavelength. Again, the PPRKS with 40 interpolation points is compared to the 500-point Fourier method, but this time the latter method uses the same coarse-grid operator that is used during construction of the PPRKS (instead of the operator that uses stepsizes 0.6 as in the previous figure). Clearly, the RKS approach which uses a Galerkin condition to select optimal linear combinations with respect to a fine operator outperforms the direct Fourier method that uses the same operator to construct the field approximations. The RKS approach is gauged to the operator by using the Galerkin condition.

\begin{figure}[!]
\centering
\subfigure[Time domain trace from the left most source to the right most receiver after $m=40$~interpolation points.]{\label{fig:TD12_S1_R12_coarse}\includegraphics[trim=0mm 0mm 0mm 0mm,clip,width=0.56\textwidth]{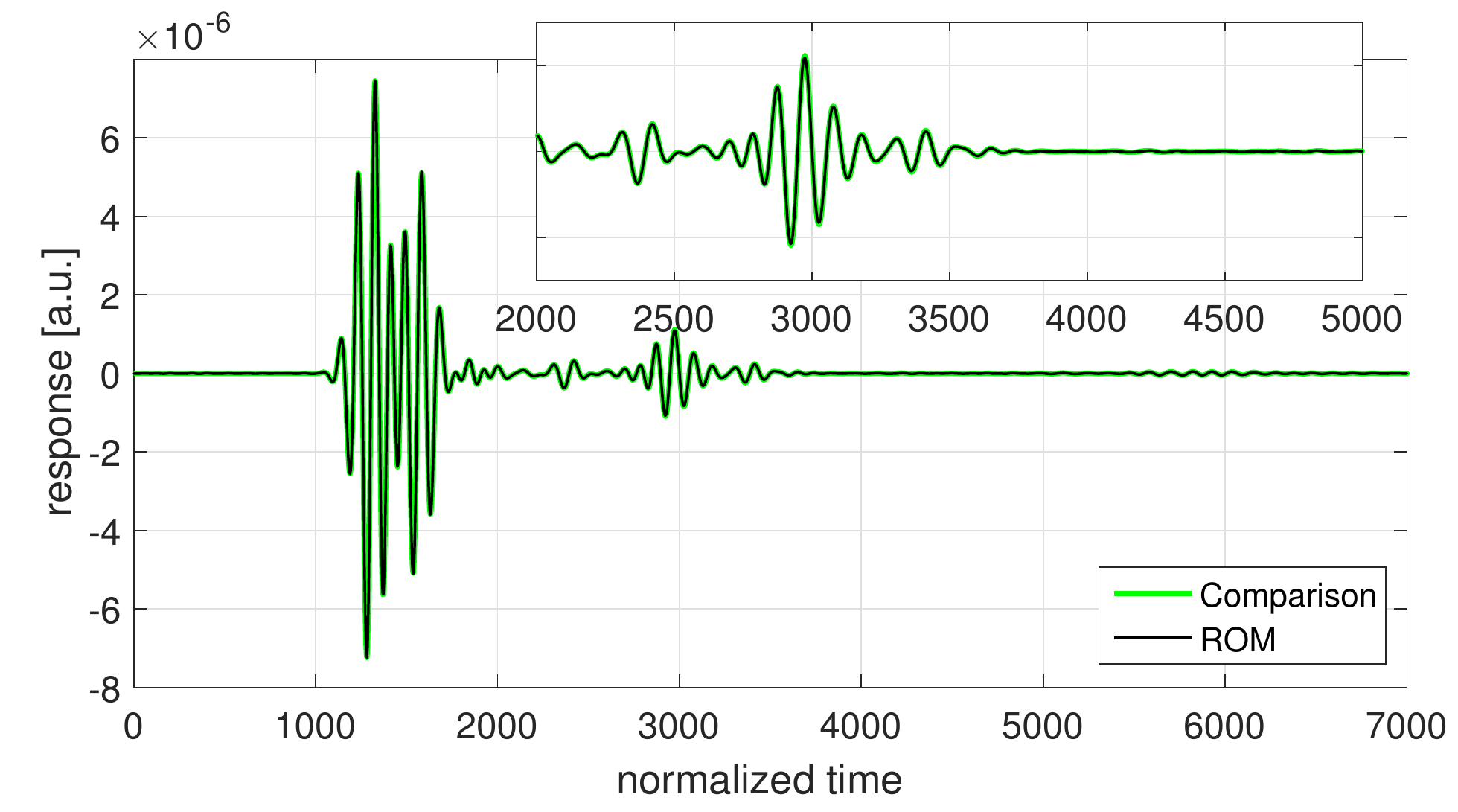}}
~
\subfigure[Decay of singular values of RKS vectors compared to the decay  of  {$[c_\text{out} \,\,\, \bar{c}_\text{in}]$}.]{\label{fig:SVDSingVal}\includegraphics[trim=0mm 0mm 0mm 0mm,clip,width=0.42\textwidth]{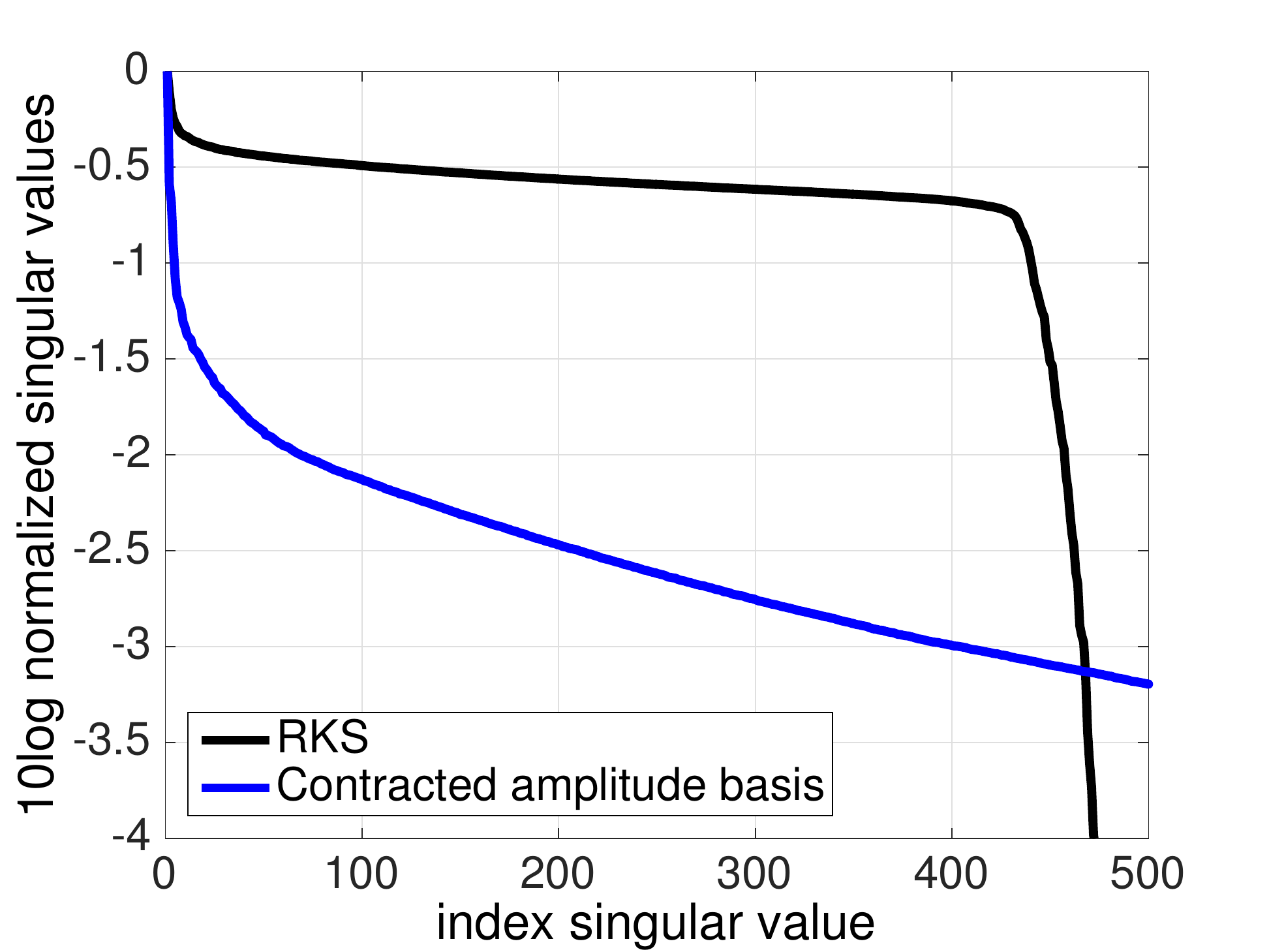}}
~
\subfigure[{\color{black}The relative error of eq.~(\ref{eq:MOMOErrDef}) of RKS, FDFD and PPRKS} as function of frequency. The error of an FDFD method with an step size that is 20\% larger then the one used to produce the comparison solution is shown in red.]{\label{fig:MarmousiSmoothCoarseErrTF}\includegraphics[trim=0mm 0mm 0mm 0mm,clip,width=0.45\textwidth]{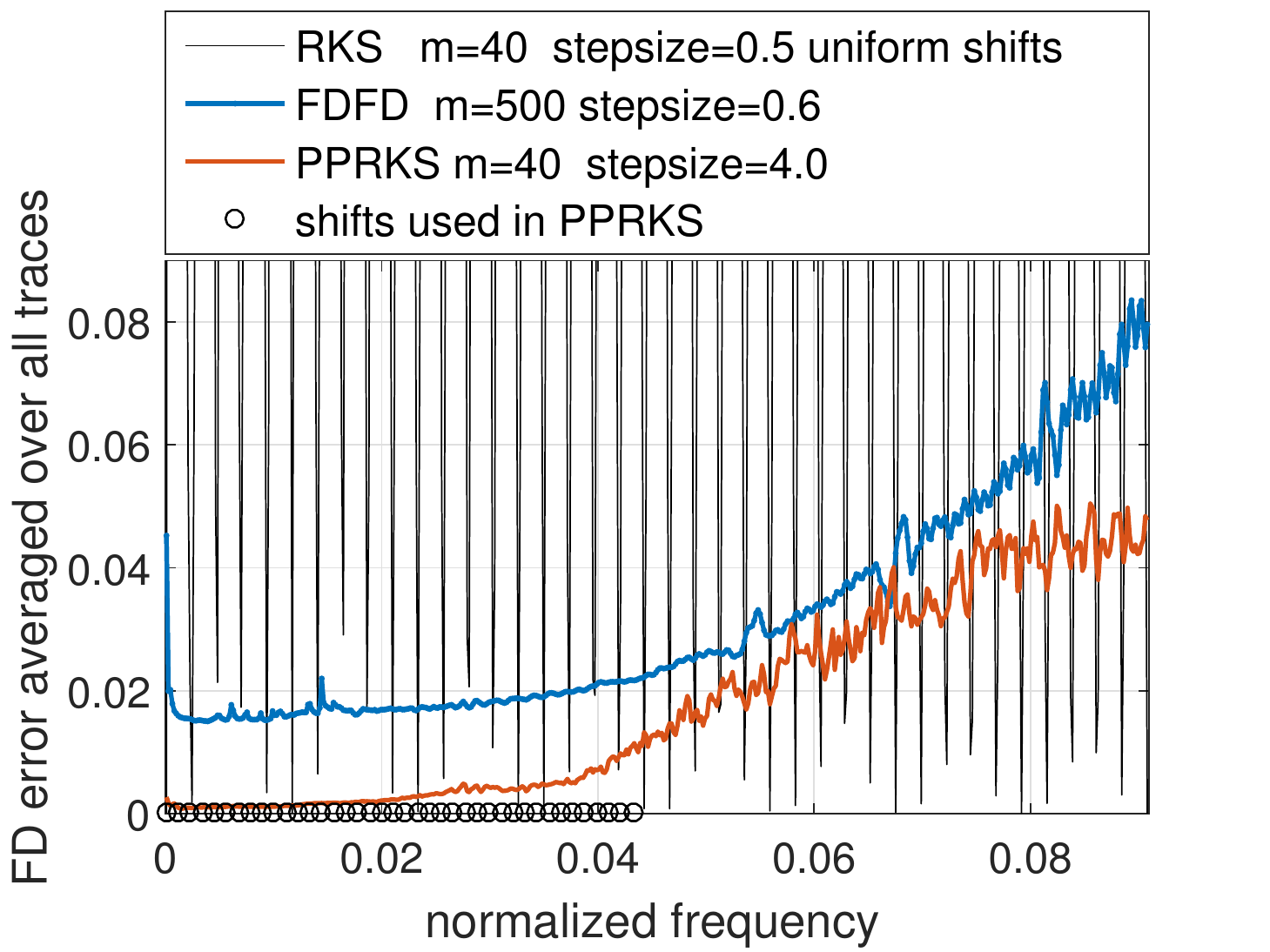}}
~
\subfigure[Comparison of the relative error of the reduced-order model $\text{err}_\text{ROM;coarse}^{\rm average}$ versus $\text{err}_\text{FD;coarse}^{\rm average}$ obtained by direct evaluation of the transfer function on the coarse grid using $\bsQ_\text{coarse}(s)$, as {\color{black}a function} of points per wavelength.  ]{\label{fig:ComparisionROMtoCoarseGrid}\includegraphics[width=0.45\textwidth]{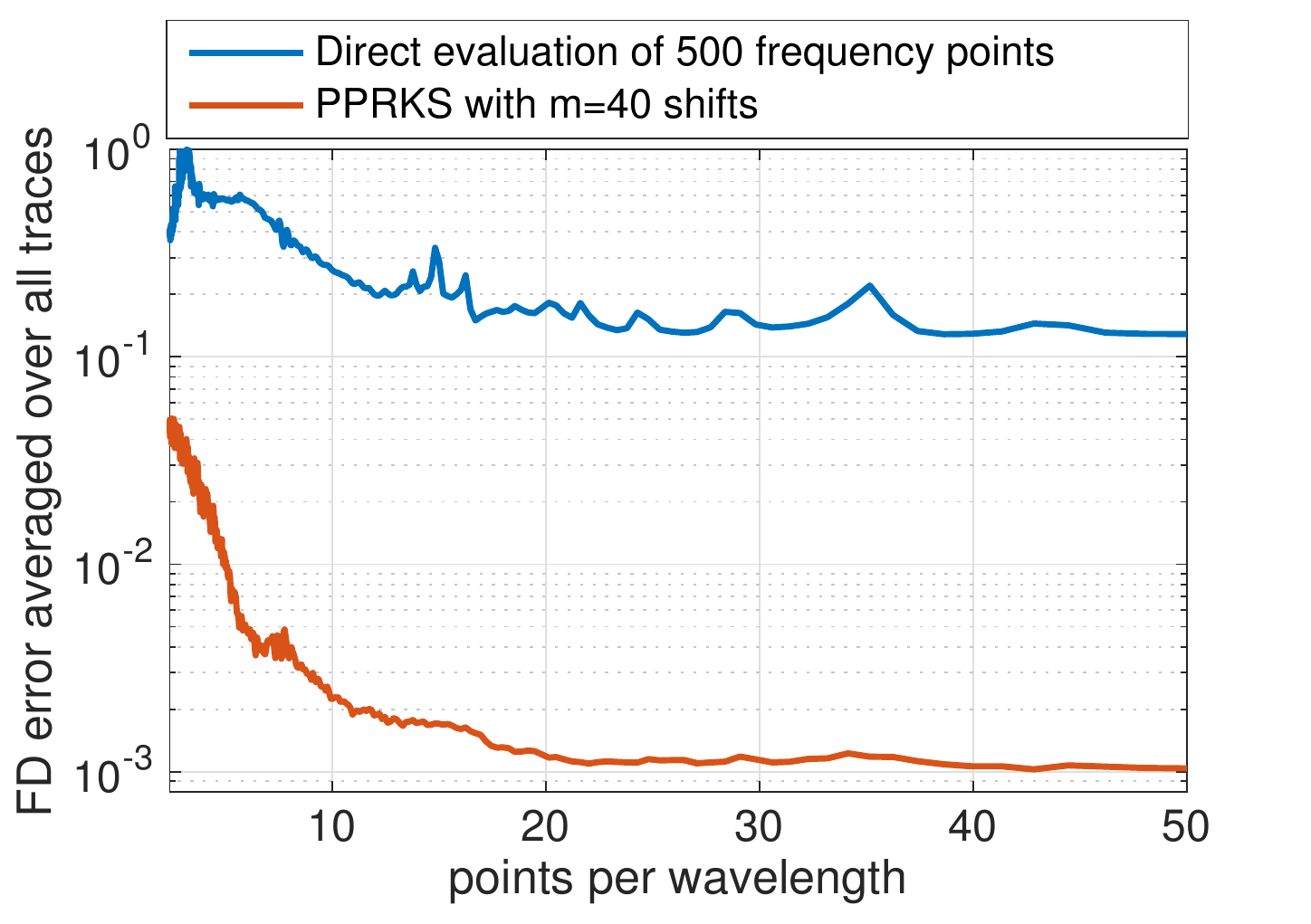}}
\caption{Smooth Marmousi test configuration with grid coarsening.}\label{fig:SmoothLayersConfigurationOverallCoarse}
\end{figure}

\subsection{A geophysical structure with a non-smooth wave speed profile}
The justification of the phase-preconditioned algorithm is based on a geometrical optics argument. This asymptotic argument is applicable for smooth media with {\color{black}spatial variations} that take place on a scale much smaller than the wavelength. On the other hand, RKS reduced-order modeling is a valid approach independent of the medium considered and \cref{prop:exactness1D} shows that one-dimensional problems with piecewise constant wave speeds need not {\color{black}be} a problem for this approach to work. Therefore, let us turn to an unsmoothed variant of the layered geophysical structure from the Marmousi model considered earlier as depicted in Figure~\ref{fig:ConfigurationMarmousiCoarseHard}.

For this structure, we essentially follow the same procedure as before. Specifically, we again position 12 source-receiver pairs at the top air-water interface and use the same coarse grid operator as in the previous example to construct a phase-corrected RKS reduced-order model of order $m=40$ with interpolation points on the imaginary axis covering the lower half of the spectrum of the pulse such that {\color{black}we have} 5 points per smallest wavelength for the highest interpolation frequency. The center frequency of the pulse is again chosen at 5.5~ppw. The only difference in model construction compared with the previous example, is that here we use a truncated SVD that takes 150 SVD basis functions into account, instead of the 100 basis functions in the previous example. Here, more basis functions are required, since the amplitude functions are less smooth due to the non-smooth wave speed profile of the present Marmousi model. Finally, the comparison solution is computed using a direct 500-point Fourier method using a spatially discretized operator with step sizes that are four times smaller than the step sizes used in the coarse operator. The coarse operator has a normalized step size of 4 and the operator used to compute the comparison FDFD response has a normalized stepsize of 1.

The resulting error is shown in Figure~\ref{fig:ComparisonFDerrCoarseSmoothFreq} along with the corresponding error curve for an FDFD method which used the coarse operator that constructed the PPRKS. {\color{black}In addition,} the error of an ordinary RKS method is shown, which uses the fine operator for construction and evaluation. It uses $m=40$ shifts uniformly distributed on the whole spectral interval. An RKS method on the fine grid interpolates the FDFD response on the shifts, which leads to a strongly oscillatory error curve. This error curve clearly shows the advantage of phase-preconditioning with a dual grid approach -- a lower error is achieved while solving considerably smaller shifted systems, and projecting on the same operator.
The performance of the algorithm for smooth profiles is better than for non-smooth wave speed profiles, especially for extrapolated frequencies. We also observe that the error decreases for lower frequencies, since lower frequencies have larger wavelengths and variations in the wave speed profile take place on a scale smaller than these wavelengths of operation. {\color{black} Furthermore,} compared to an FDFD method that uses a 20\% coarser grid than the comparison solution, the PPRKS  achieves lower errors across the whole spectral interval while the systems that need to be solved are much smaller. Especially in the area {\color{black}where} the phase-preconditioned method has shifts it reproduces the comparison solution remarkably well. A similar error comparison is shown in Figure~\ref{fig:ComparisonFDerrCoarseSmooth}, where the error is plotted against points per wavelength.

To illustrate the effects of an increased error in the time-domain, we show the time trace for the most distant source-receiver pair in Figure~\ref{fig:TD12_S1_R12_coarsehard} (for the same Gaussian pulse as used before in Figure~\ref{fig:TD12_S1_R12_coarse}) along with a comparison solution obtained with the 500-point Fourier method. We observe that the arrival times are approximated well; only the amplitudes are slightly off. Throughout our numerical work, we have found that this result is typical for non-smooth problems. Furthermore, compared with the same trace computed for smooth media as shown in Figure~\ref{fig:TD12_S1_R12_coarse}, it is clear that a larger part of the pulse is scattered back to the receiver, as arrivals are visible on the complete time interval of observation for the non-smooth velocity profile considered here. 

\begin{figure}[!]
\centering
\subfigure[Section of the wave speed profile from the Marmousi model.]{\label{fig:ConfigurationMarmousiCoarseHard}\includegraphics [trim=15mm 0mm 15mm 0mm, clip,width=0.38\textwidth]{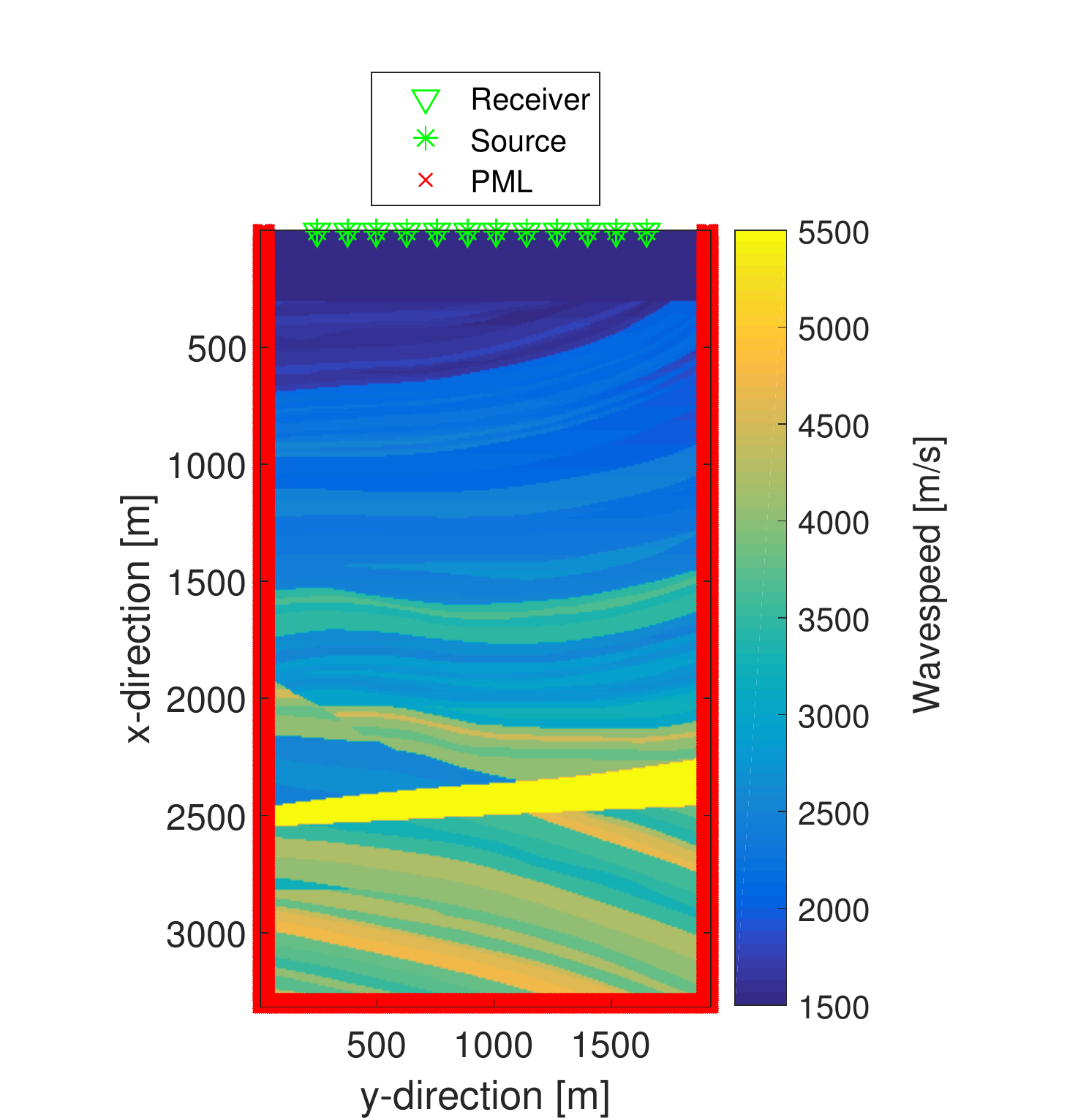}}
~
\subfigure[Comparison of relative error $\text{err}_\text{ROM;coarse}^{\rm average}$ and $\text{err}_\text{FD;coarse}^{\rm average}$  for non-smooth Marmousi layer configuration.]{\label{fig:ComparisonFDerrCoarseSmoothFreq}\includegraphics[trim=0mm 0mm 0mm 0mm,clip,width=0.59\textwidth]{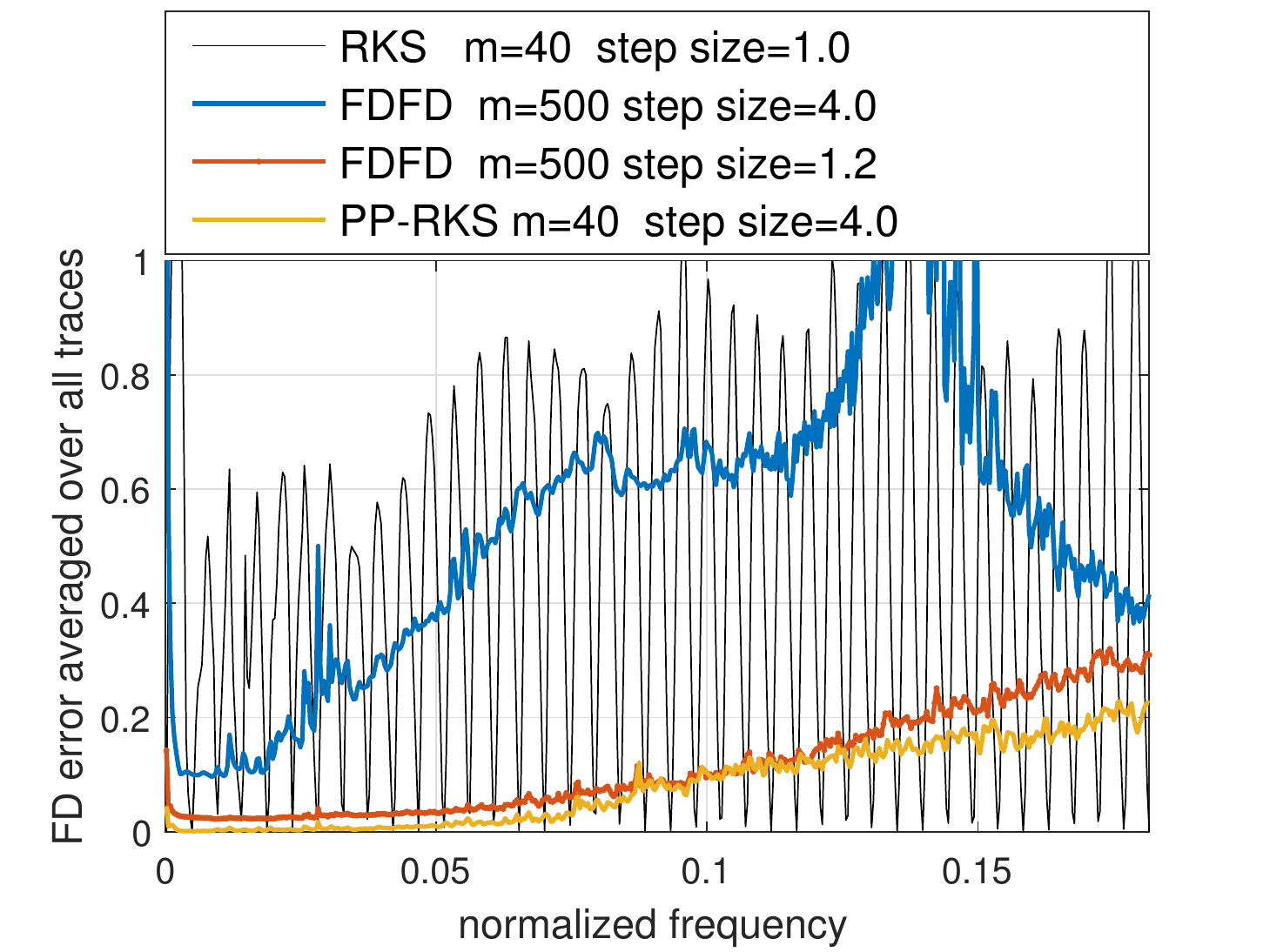}}
~
\subfigure[Comparison of relative error $\text{err}_\text{ROM;coarse}^{\rm average}$ and $\text{err}_\text{FD;coarse}^{\rm average}$ for non-smooth Marmousi layer configuration.]{\label{fig:ComparisonFDerrCoarseSmooth}\includegraphics[trim=0mm 0mm 0mm 0mm,clip,width=0.45\textwidth]{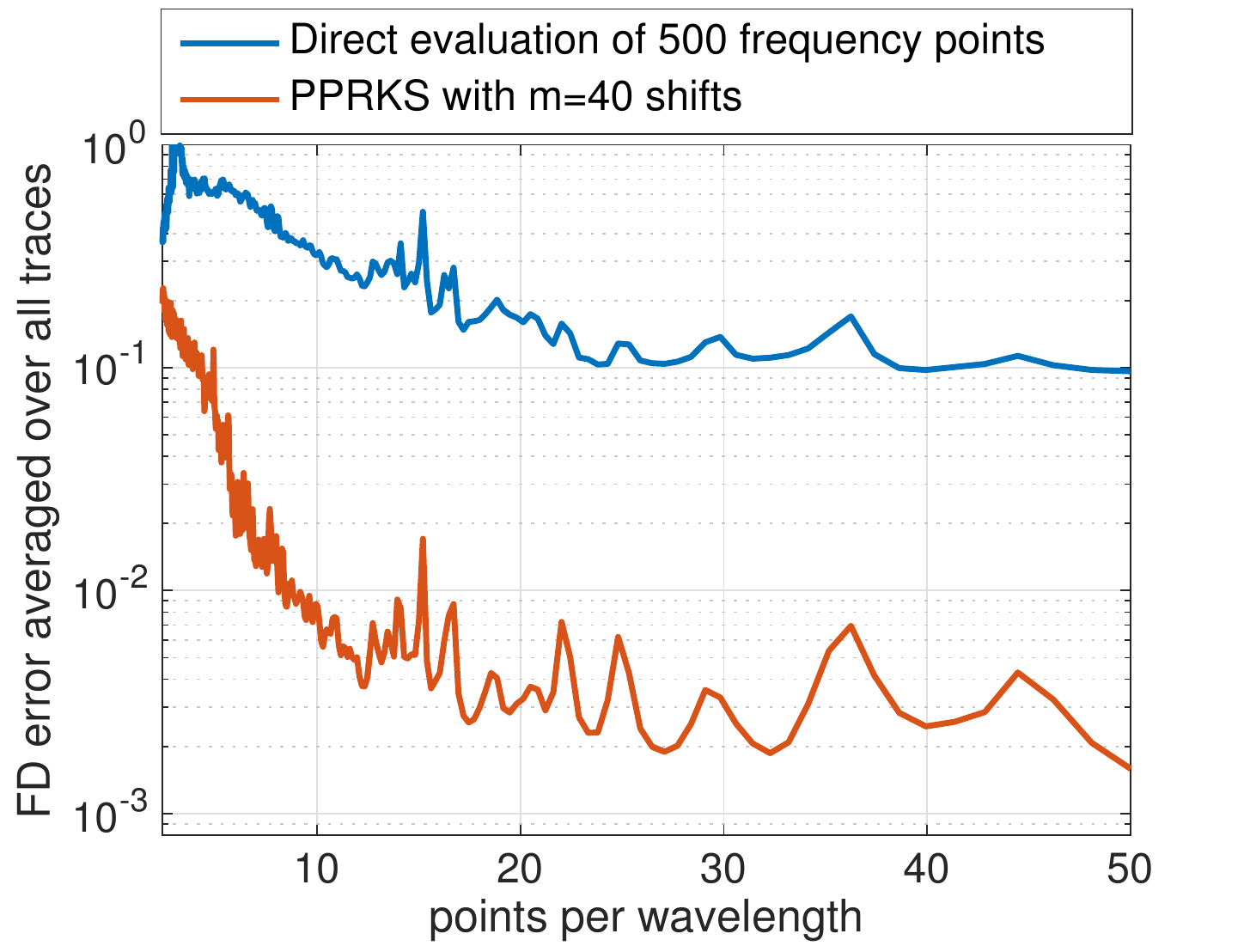}}
~
\subfigure[Time-domain trace from the leftmost source to the rightmost receiver after $m=40$~interpolation points and the comparison solution.]{\label{fig:TD12_S1_R12_coarsehard}\includegraphics [trim=0mm 0mm 0mm 4mm, clip,width=0.9\textwidth]{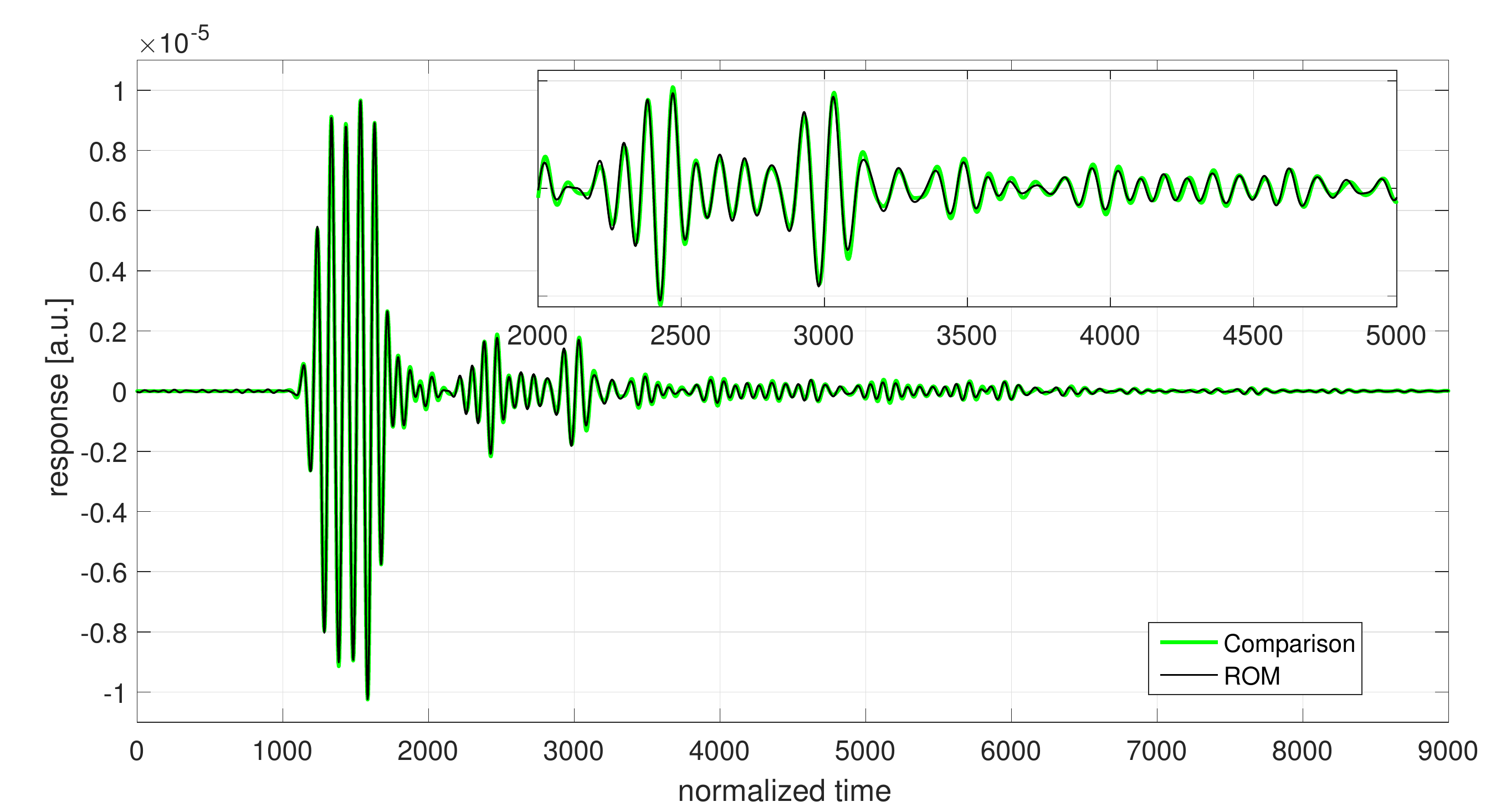}}
\caption{{\color{black}Non-smooth} Marmousi test configuration with grid coarsening.}\label{fig:
HardLayersConfigurationOverallCoarse}
\end{figure}

\subsection{A resonant cavity embedded in a smooth geology}\label{sec:resonant}
In this section we investigate the performance of our algorithm in a configuration with a resonant cavity. Figure~\ref{fig:ConfigBorehole} shows the wave speed profile, which is inspired by borehole exploration. Coinciding source-receiver pairs are placed at the surface and inside a borehole of slow acoustical wave speed. 

The grid coarsening procedure and wavelet selection is equivalent to the previous examples. A coarse grid operator with the same accuracy as selected in the previous example is used to construct a phase-corrected RKS reduced-order model of order $m=40$ with interpolation points on the imaginary axis covering the lower half of the spectrum of the pulse such that 5 points per smallest wavelength are used for the highest interpolation frequency. The center frequency of the pulse is again chosen at 5.5~ppw. 

To approximate the cavity-resonances with few interpolation points, we extend the approach discussed in this paper {\color{black}and factor out} oscillations of resonance modes along the borehole. { To do so, we take the fact that the eikonal time $T_\text{eik}$ is multivalued into account.} More specifically, each solution $\bsu(\kappa_i)$ is split using two different phase terms, a cavity-mode phase term and a propagation phase term. The eikonal phase term shown in Figure~\ref{fig:TeikBorehole1} shows a caustic inside the borehole, which has a low wave speed compared to its surrounding. In this experiment we also factor out the cavity-mode phase term $g(s T_\text{eik;CM} )$, where $T_\text{eik;CM}$ follows the borehole as shown in Figure~\ref{fig:TeikBorehole2}. The eikonal time of the cavity-mode $T_\text{eik;CM}$ is not the second arrival, but it is chosen to correctly factor out resonances present in the borehole. At every interpolation point we split the field into four amplitudes as
\begin{align}
u^{[l]}(s_j)&=g(s_j T^{[l]}_\text{eik} ) c_\text{out;eik}^{[l]}(s_j) + g(-s_j T^{[l]}_\text{eik}) c_\text{in;eik}^{[l]}(s_j),\\
u^{[l]}(s_j)&=g(s_j T^{[l]}_\text{eik;CM} ) c_\text{out;CM}^{[l]}(s_j)+ g(-s_j T^{[l]}_\text{eik;CM}) c_\text{in;CM}^{[l]}(s_j).
\end{align}
With 14 sources and 40 interpolation points we end up with 560 amplitudes for each of the four amplitudes $c_\text{in/out;{eik}/CM}$, which we compress to 30 each using an SVD. These compressed amplitudes are then used to construct the phase-preconditioned rational Krylov subspace on which the fine operator is projected. 

For these types of configurations the time window of interest tends to be very long due to the resonant nature of the configuration. FDTD therefore requires very long runtimes, {\color{black}whereas} the proposed algorithm just needs to evaluate the ROM on more frequencies to avoid aliasing. The time-domain trace of the top most source-receiver pair within the borehole is shown in Figure~\ref{fig:TD12_S1_R1_borhole}, where the emitted pulse bounces back and forth within the cavity. The reduced-order model {\color{black}captures this resonant behavior,} showing that the resonant modes are well approximated. In Figure~\ref{fig:TD12_S7_R14_borhole} a trace is shown from a source within the borehole to a surface receiver. In this trace it can be seen that the interaction of the pulse with the smooth geology is modeled {\color{black}correctly} next to the repetitive trace shape caused by the resonant cavity.
An ordinary RKS method with no grid coarsening would perform well on this problem, since it is mainly dominated by the resonant cavity; however, this would require solutions of the wave equation on a much finer grids then the proposed approach. Furthermore, contrary to the proposed approach, the RKS approximation deteriorates as the configuration size and thus the propagation distance from the cavity to the receiver increases.

In this experiment we show that the developed algorithm shows potential for reduced-order modeling of resonant cavities within slowly varying media. The combination of an RKS method together with phase-preconditioning can approximate both resonant {\color{black}eigenmodes as well} as propagative modes. We point out that this is just a first approach in order to include resonant structures into reduced-order models that are travel time dominated. 

\begin{figure}[!]
\centering
\subfigure[Simulated Configurations with labelled sources.]{\label{fig:ConfigBorehole}\includegraphics[trim=0mm 0mm 0mm 0mm, clip,width=0.31\textwidth]{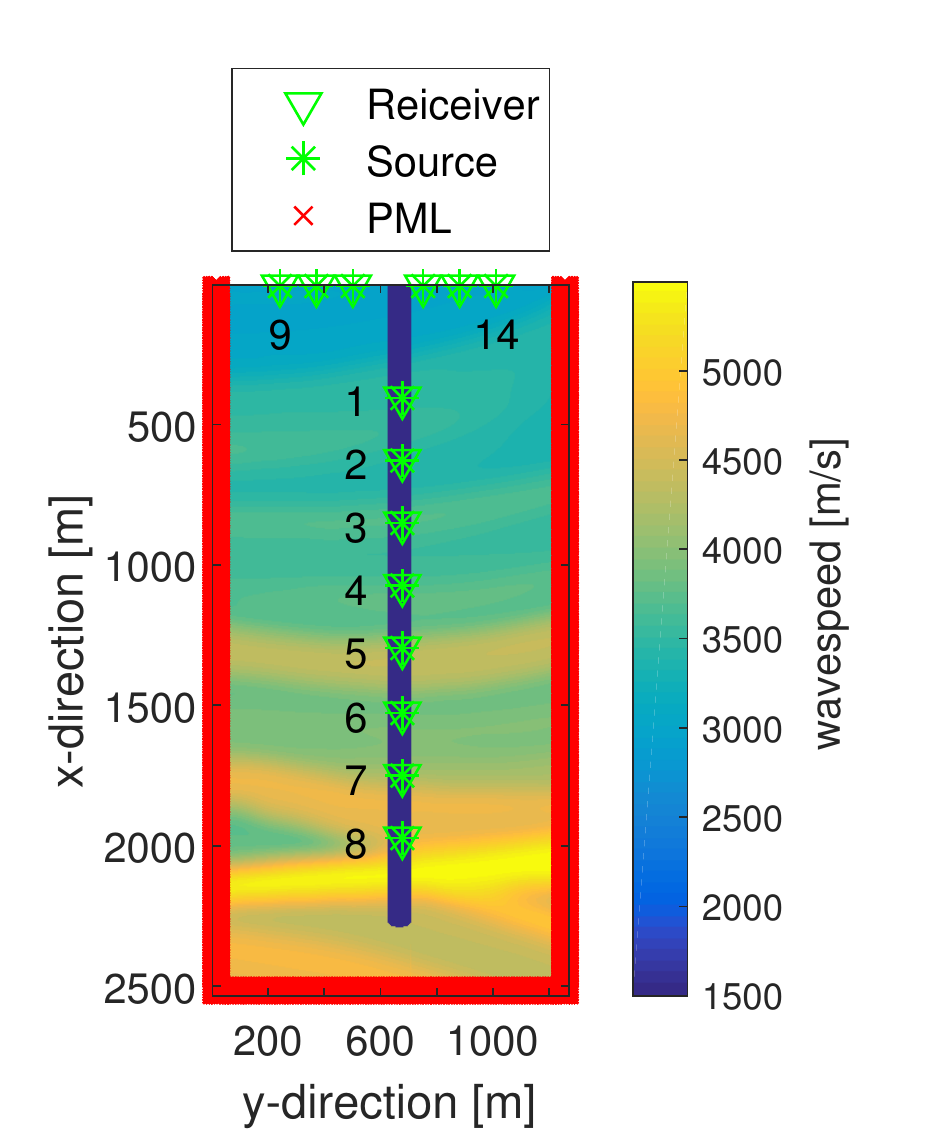}}
~
\subfigure[Isosurfaces of the eikonal time $\bsT_{\rm eik}$ used to approximate the propagative part of the solution.]{\label{fig:TeikBorehole1}\includegraphics[trim=0mm 0mm 0mm 0mm, clip,width=0.31\textwidth]{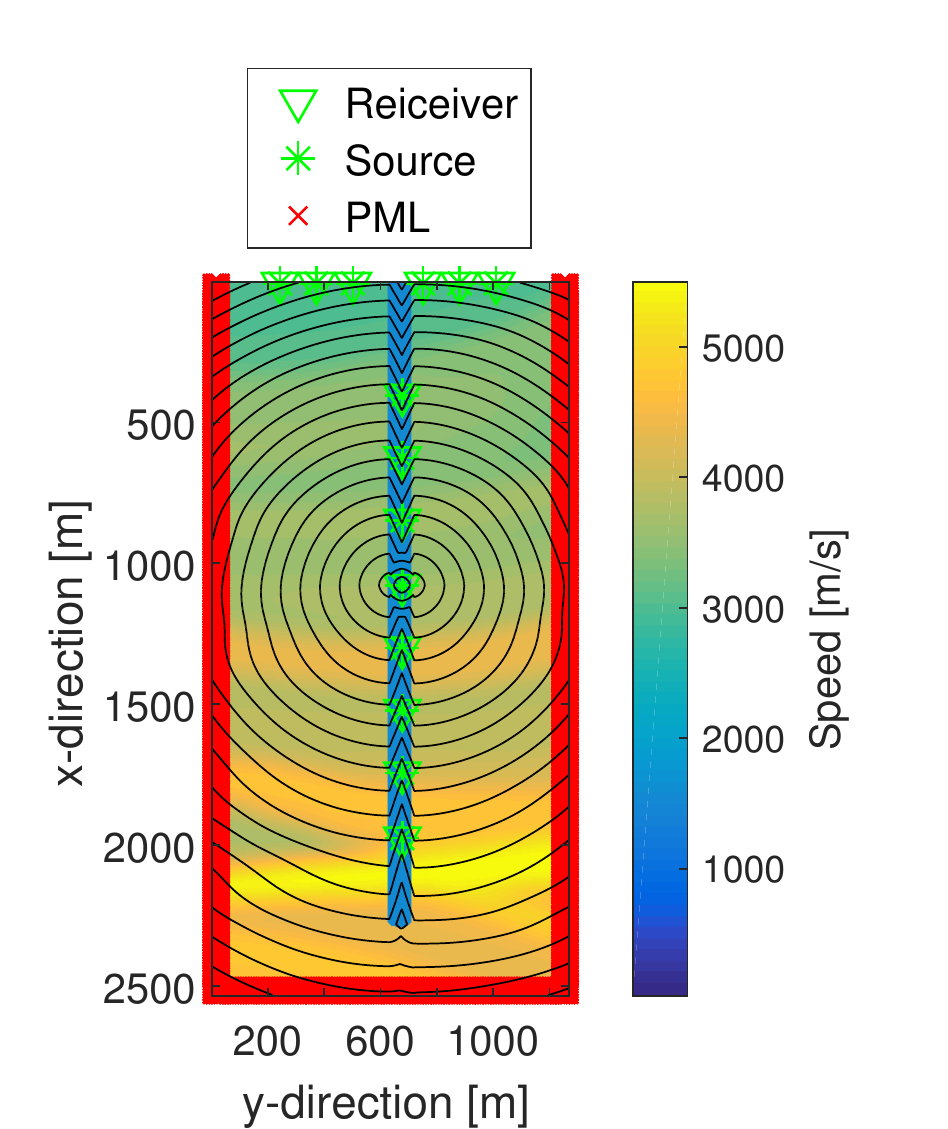}}
~
\subfigure[Isosurfaces of the eikonal time $\bsT_{\rm eik; CM}$ used to approximate the cavity-modes of the solution.]{\label{fig:TeikBorehole2}\includegraphics[trim=0mm 0mm 0mm 0mm, clip,width=0.31\textwidth]{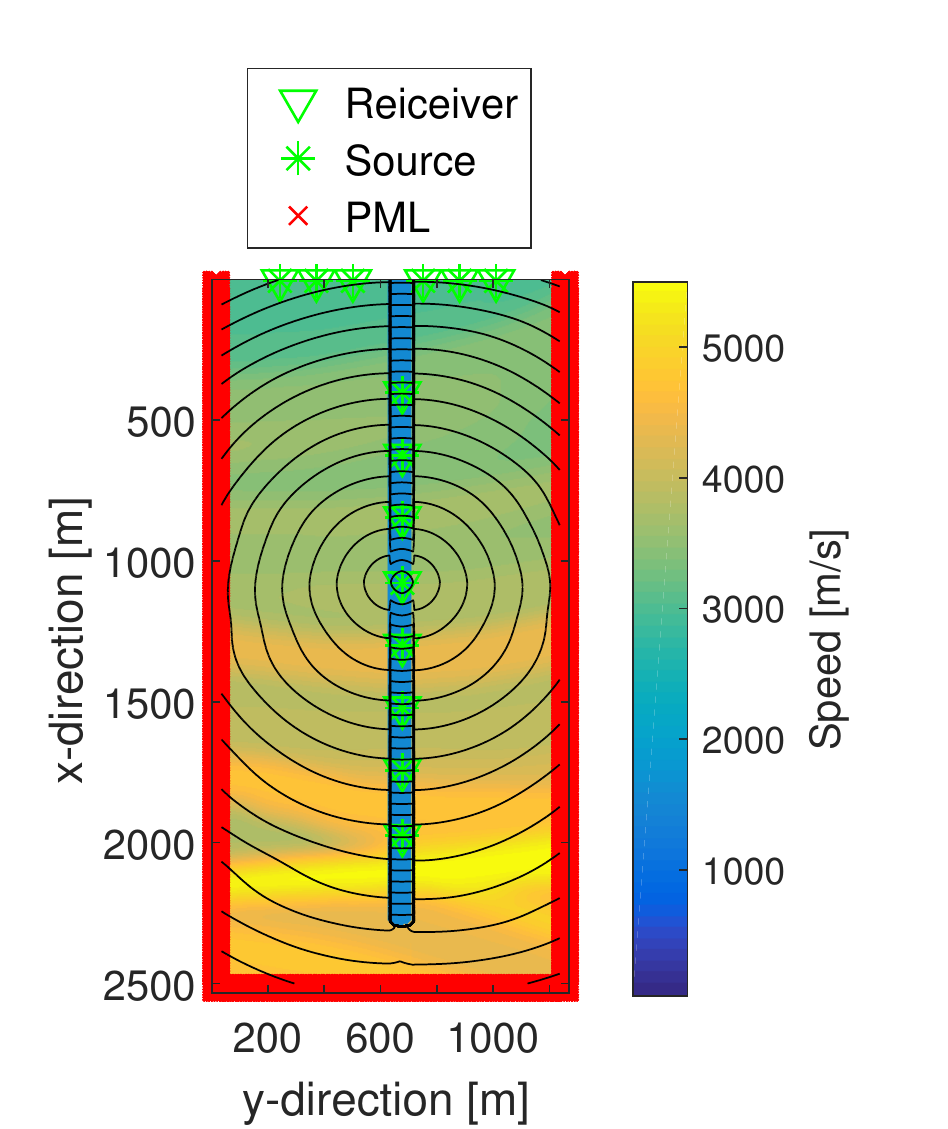}}

~
\subfigure[Time-domain trace of the coinciding source receiver pair number 1 after $m=40$~interpolation points, together with the comparison solution.]{\label{fig:TD12_S1_R1_borhole}\includegraphics [trim=0mm 0mm 0mm 4mm, clip,width=0.9\textwidth]{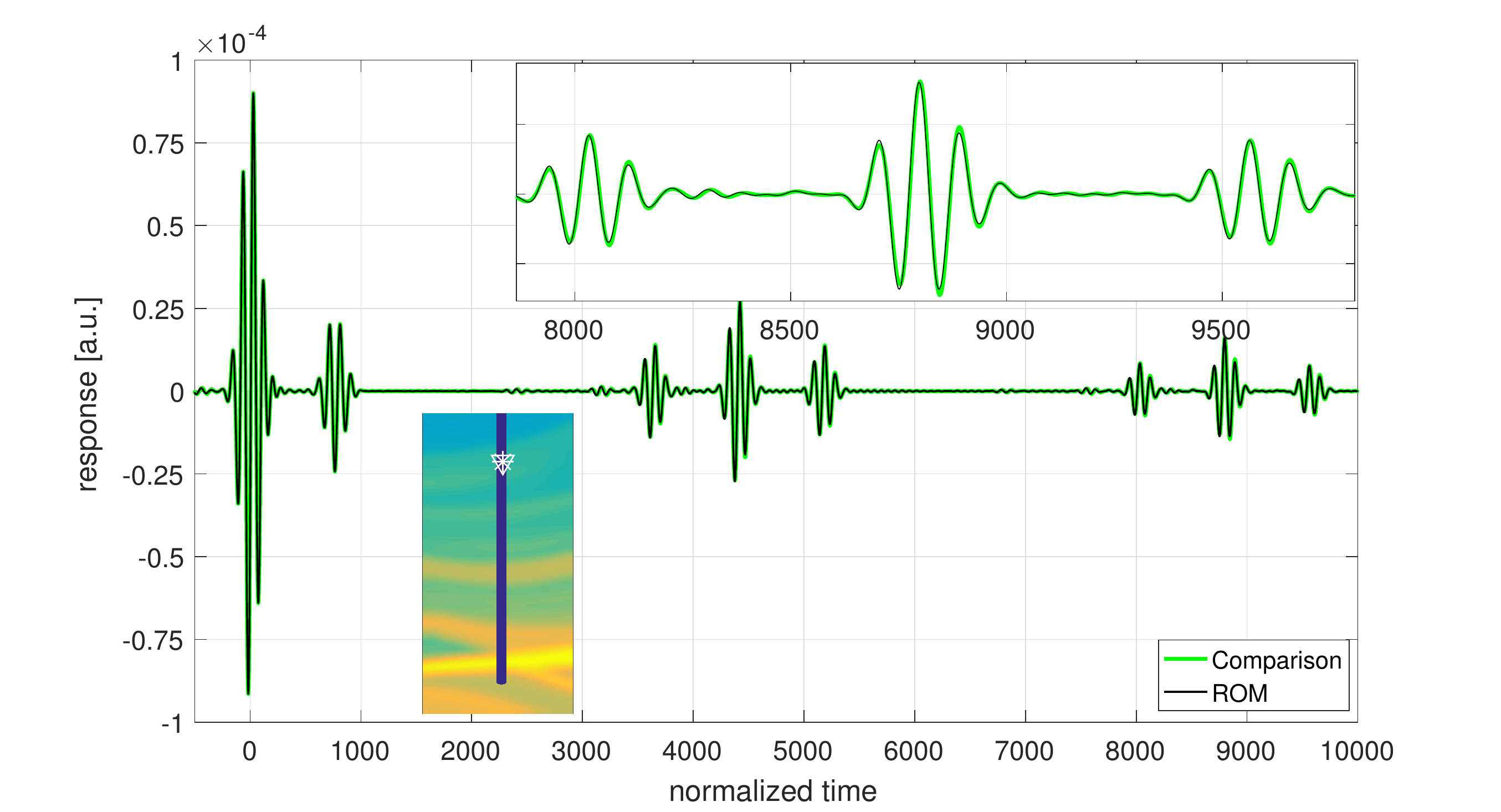}}
~
\subfigure[Time-domain trace from source number 7 inside the borehole to the rightmost surface receiver number 14 after $m=40$~interpolation points.]{\label{fig:TD12_S7_R14_borhole}\includegraphics [trim=0mm 0mm 0mm 4mm, clip,width=0.9\textwidth]{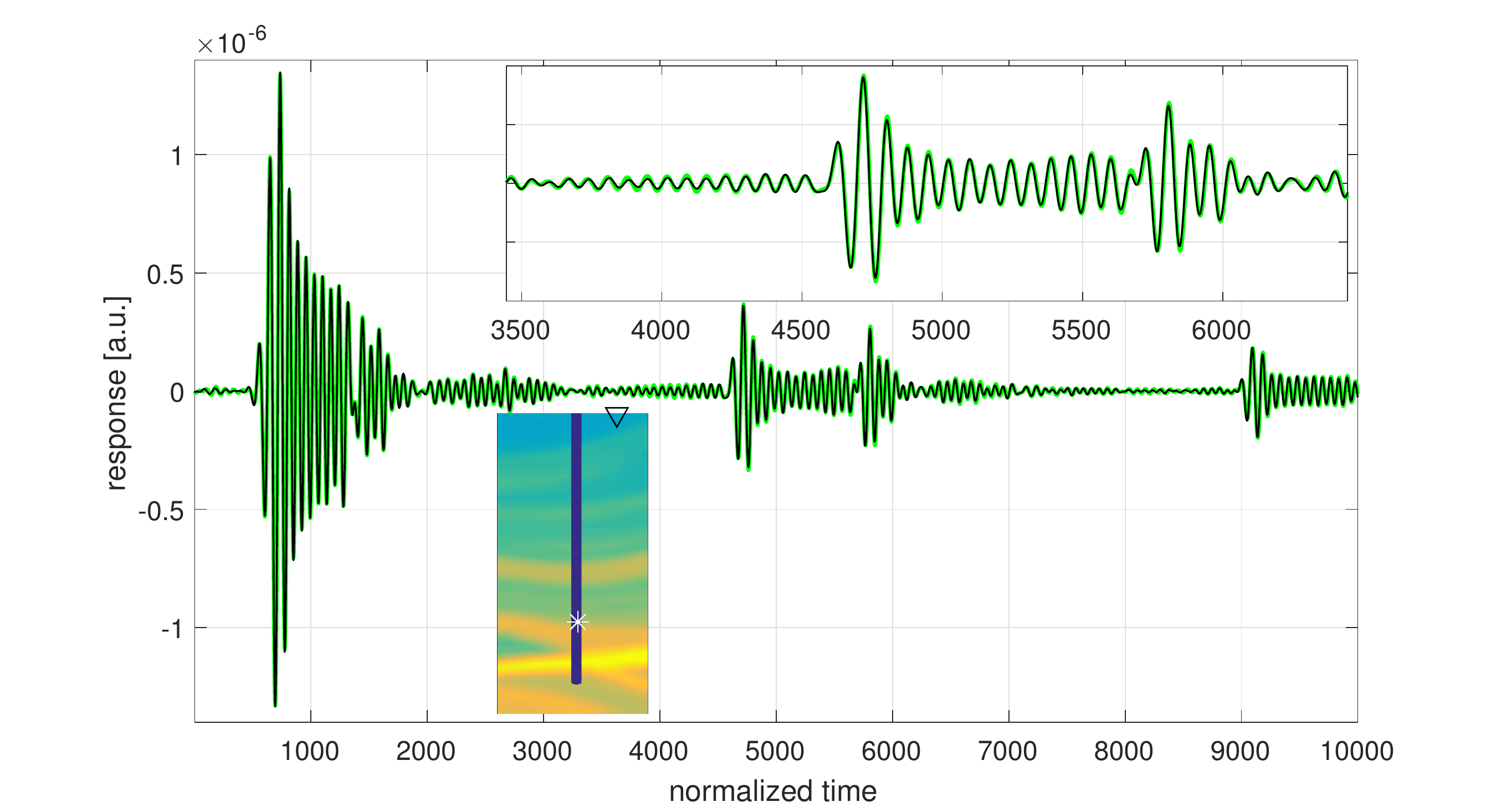}}
\caption{Resonant cavity inside a smooth geology test case.}
\end{figure}

\section{Discussion on parallel implementation}

The numerical experiments of the previous section (using a serial MATLAB prototype code) showed significant compression of large-scale wave propagation due to phase-preconditioning.  
{\color{black}To see how} observed dimensionality reduction can be translated to computational cost reduction using modern high performance platforms, e.g., cloud computing,  we consider the simplest parallel implementation, known in computer science literature as {\color{black}an} 'embarrassingly parallel workflow'.\footnote{Term used for parallelization not requiring horizontal communication between nodes.} 

Like the majority of the projection-based model reduction methods,  the PPRKS can be split into basis construction and ROM evaluation stages, as summarized in  Figure~\ref{fig:OverviewAlgorithm}. This figure is complemented by Table~\ref {tab:CompCost}, where we  compare computational cost estimates for {\color{black}PPRKS with standard RKS} neglecting $O(N_{\rm f})$ terms and  considering  only   parallelism  on the external level.

For both {\color{black}standard RKS and PPRKS} the main cost of the first stage consists of the computation of the block-RKS and {\color{black}the rank-revealing subspace truncation via SVD.}
 Phase-preconditioning adds the negligible cost of solving the eikonal equation   and  the  decomposing the waves into incoming/outgoing amplitudes via (\ref{eq:splitting1} and \ref{eq:splitting2}).
In the table we assume that the block-RKS is computed by assigning solutions of Helmholtz problems for different frequencies and right-hand sides to separate workers,
so that the PPRKS and RKS  require $N_{\rm src} m_{\rm PPRKS}$ and $N_{\rm src} m_{\rm RKS}$ nodes, respectively. The PPRKS obviously reduces the number of the Helmholtz solves; however, in the parallel implementation the most important cost reduction lies in a single solve. In our case, this cost is critical due to the high complexity and poor internal parallel scalability of available Helmholtz solvers. In the table, $\psi(N)$ reflects this (usually faster than logarithmic) growth of the computational complexity of the Helmholtz solver.

Thus, the observed reduction in grid nodes from ${\rm N_{\rm f}}$ to ${\rm N_{\rm c}}$, which lies between one and two orders of magnitude, can result in even  stronger reductions of computation time. The subspace truncation is another poorly-scalable bottle-neck of the basis generation stage (e.g., see \cite{ROM_projectionref}) and the compound effect of the reduction of $m_{\rm PPRKS}$ and ${\rm N_{\rm c}}$ compared  to $m_{\rm RKS}$ and ${\rm N_{\rm f}}$ is more than two orders.
 
The main cost of the second stage is the evaluation of the ROM frequency response at quadrature points in the frequency domain. In particular, the computation of the orthogonal basis and the Galerkin projection are the main bottlenecks with costs that growth linear with respect to the fine grid dimension.
The dimension of the PPRKS approximation space is the product of the size of the compressed amplitude space and the number of sources, which is usually of the same order as the dimension needed for {\color{black}standard} RKS\footnote{{\color{black}Recall that this is due} to the tensor--product structure of the PPRKS approximation space given by  (\ref{eq:finaldecomp})}. Nonetheless, storage of the space is reduced by a factor of $N_{\rm src} N_{\rm f}/N_{\rm c} $ and the computation of the coarse grid amplitudes is obviously cheaper.
However, the phase-preconditioned subspace is frequency dependent, unlike the standard RKS. Therefore, the Galerkin projection should be computed for every frequency for the entire operator $Q(s)$. This is not a significant disadvantage thanks to the possibility of a embarrassingly parallel implementation; for every evaluation frequency a separate worker can be assigned.  Moreover, the compressed tensor-product representation (\ref{eq:finaldecomp})  allows efficient lower level parallelization for the evaluation phase of PPRKS, i.e., column-wise, element-wise and via domain-decomposition of the inner-products. {\color{black}Solving the Galerkin system as well as carrying out inverse Fourier transforms to the time-domain are independent on the grid size and their costs can be neglected.}
\begin{figure}[!htbp]
\centering
\tikzstyle{central}=[rectangle,
                                    thick,
                                    minimum size=1.7cm,
                                    draw=blue!80,
                                    fill=blue!10]

\tikzstyle{parralell}=[rectangle,
                                    thick,
                                    minimum size=1.2cm,
                                    draw=green!80,
                                    fill=green!20,
                                    rounded corners=0.5mm]
                                                                           
\tikzstyle{parralelltext}=[rectangle,
                                    thick,
                                    minimum size=1.7cm,
                                    draw=green!80,
                                    fill=green!20,
                                    rounded corners=0.5mm]

\tikzstyle{background}=[rectangle,
                                                fill=gray!10,
                                                inner sep=0.2cm,
                                                minimum size=2.2cm,
                                                minimum width=3.2cm,
                                                rounded corners=5mm]

\tikzstyle{background2}=[rectangle,
			       draw=orange!80,
                                                fill=orange!20,
                                                inner sep=0.5cm,
                                                outer sep=1cm,
                                                minimum size=2.2cm,
                                                minimum width=3.2cm,
                                                rounded corners=5mm]

\tikzstyle{explain}=[rectangle,
                                                fill=gray!10,
                                                inner sep=0.1cm,
                                                minimum size=2.2cm,
                                                minimum width=3.0cm,
                                                rounded corners=5mm]

\begin{tikzpicture}[>=latex,text height=1.5ex,text depth=0.25ex]
  \matrix[row sep=0.5cm,column sep=0.25 cm] {
  
&  
& 
&\node(c1)[central]{\begin{tabular}{c} Initialize Simulation \\ Compute $\mathsf{T}_{\rm eik}^{[l]}$  \end{tabular}};
& 
&
&
&\\
  
    &\node(p1l)[parralell]{};
  &\node(p1temp1){};
  &\node(p1c)[parralelltext]{  \begin{tabular}{c} Solve coarse problem \\ single shot/frequency \\ $\mathsf{Q}_\text{coarse}(\kappa_i)\mathsf{u}^{[l]}(\kappa_i)=\mathsf{b}^{[l]}$\end{tabular}};
  &\node(p1temp2){};
  &\node(p1r)[parralell]{}; 
  & \node(p1ex1)[explain]{ \begin{tabular}{c} {\bf Embarrassingly}\\ {\bf Parallel}\end{tabular}};\\
  
&\node(dummy1){}; 
&\node(dummy2){};
&\node(c2)[central]{\begin{tabular}{c} Compute SVD of \\ $\mathsf{c}^{[l]}_\text{out}$ and $\mathsf{c}^{[l]}_\text{in}$  \end{tabular}};
& 
&\\
  
&  
& 
&\node(cdummy){};
& 
&
&\\

  &\node(p2l)[parralell]{};
   &\node(p2temp1){};
  &\node(p2c)[parralelltext]{  \begin{tabular}{c} Evaluate ROM  single Frequency $s_\text{e}$ \\ $\mathsf{R}_{m;\text{EIK}}=\mathsf{V}_{m;{\rm EIK}}(s_\text{e})^H  \mathsf{Q}_\text{fine}\mathsf{V}_{m;{\rm EIK}}(s_\text{e})$ \\ $\mathsf{F}_{\text{c},m}=\mathsf{B}_\text{s}^H\mathsf{V}_{m;{\rm EIK}}(s_\text{e}) \mathsf{R}_{m;\text{EIK}}^{-1}  \mathsf{V}_{m;{\rm EIK}}(s_\text{e})^H\mathsf{B}_\text{s}$\end{tabular}};
  &\node(p2temp2){};
  &\node(p2r)[parralell]{};
  &\node(p2ex1)[explain]{ \begin{tabular}{c} {\bf Embarrassingly}\\ {\bf Parallel}\end{tabular}}; \\

&
&
&\node(c3)[central]{\begin{tabular}{c} Compute inverse\\ {Fourier Transform} \\ $\mathsf{\hat{F}}_{m;\text{c}}(t)=\mathcal{F}^{-1} \mathsf{\mathsf{F}_{m;\text{c}}(\text{i} \omega) }$  \end{tabular}};
&
&
&\\
    };


\draw[black,->]         (c1) to[out=235,in=55]   (p1l);	
\draw[black,->]         (c1) --  (p1c);
\draw[black,->]         (c1) to[out=305,in=125]  (p1r);

\draw[black,->]         (p1l) to [out=305,in=135]    (c2);
\draw[black,->]         (p1c) --   (c2);
\draw[black,->]         (p1r) to[out=235,in=45]   (c2);
         
\draw[black,->]         (c2) to[out=235,in=55]  (p2l);	
\draw[black,->]         (c2) -- (p2c);
\draw[black,->]         (c2) to[out=305,in=125]  (p2r);
                
\draw[black,->]         (p2l)  to [out=305,in=135]   (c3);
\draw[black,->]         (p2c) --  (c3);
\draw[black,->]       (p2r)   to[out=235,in=45] (c3);

\draw[black, loosely dotted, thick] (p1c) -- (p1r);
\draw[black, loosely dotted, thick] (p1c) -- (p1l);
\draw[black, loosely dotted, thick] (p2c) -- (p2r);
\draw[black, loosely dotted, thick] (p2c) -- (p2l);

    \begin{pgfonlayer}{background}

     \node [background2,
                  fit=(p2l) (c3) (p2ex1),
                   ] {};        
                   
\node [background2,
                  fit=(p1l) (c1) (p1ex1)(c2),
                  label={[xshift=3.5cm, yshift=-2.5cm]   \begin{tabular}{c}  
                  {\color{blue}\bf Basis Construction}\\ {\bf Coarse Grid computation} \end{tabular} } 
                   ] {};

     \node [background,
                  fit=(p1l) (p1c) (p1r),
                   ] {};
                   

     \node [background,
                  fit=(p2l) (p2c) (p2r),
                   label={[xshift=5.5cm, yshift=-4cm]  \begin{tabular}{c} {\color{blue}{\bf ROM Evaluation}} \\ {\bf Fine Grid Computation} \end{tabular} }
                   ] {};

    \end{pgfonlayer}
\end{tikzpicture}
\caption{Overview of the proposed algorithm.  External embarrassing parallelism is symbolized by parallel blocks. {\color{black}Internal parallelism within a block is also possible.}}
\label{fig:OverviewAlgorithm}
\end{figure}
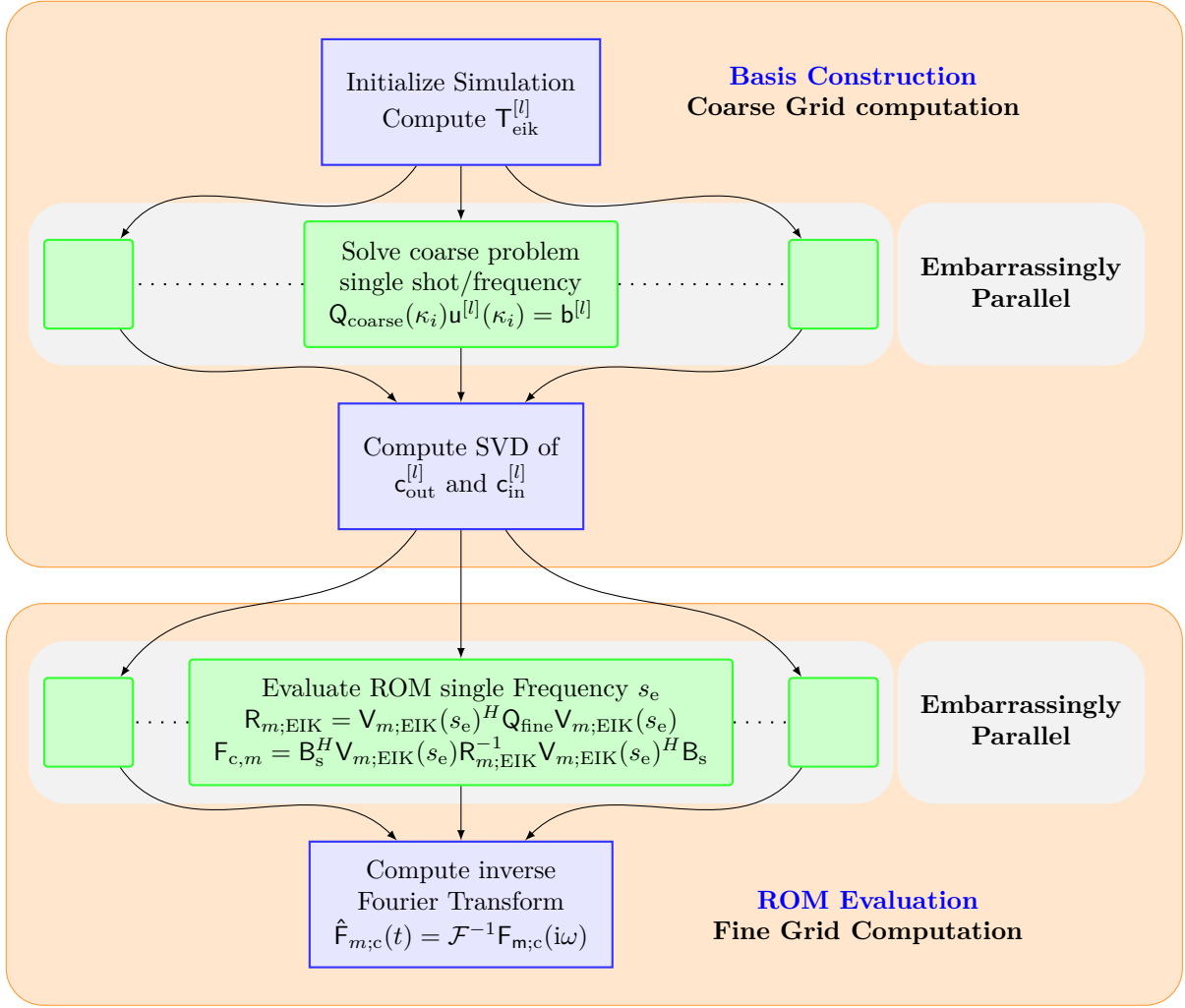

\begin{table}[]
\centering
\caption{Cost estimates for  PPRKS and RKS.}
\label{tab:CompCost}
\begin{adjustbox}{max width=\textwidth}
\begin{tabular}{|l||c|c||l l|}
\hline
\multirow{2}{*}{Step}     & \multicolumn{2}{c||}{PPRKS}&\multirow{2}{*}{Legend}&{}\\
\cline{2-3}
{} & Computations per worker & \# workers &{}&{}\\ 
\hline
Eikonal & $ O( N_{\rm f}{\rm log}  N_{\rm f})$ 		& $N_{\rm src} $	& $N_\text{src}$ &\#  sources\\
Basis Comp& $ O\left({\rm N_{\rm c}}\psi({\rm N_{\rm c}})\right)$ 	& $N_{\rm src} m_{\rm PPRKS}$ 		&$N_{\rm c/f}$&\# coarse/fine grid notes\\
SVD  &  $O( N_{\rm c} [2 N_{\rm src} m_{PPRKS} ]^2  ) $ &1&$m$&\# ROM interpolation points\\ 
Eval & $ O(N_{\rm f}N_{\rm src}^2 M_{\rm SVD}^2)$ & $N_{\rm eval} $ & ${N_{\rm eval}}$&\# evaluation frequencies\\
\cline{2-3}
{}&\multicolumn{2}{c||}{RKS}&{$M_{\rm SVD}$}&Size SVD compressed vectors\\
\cline{2-3}
{Basis Comp.}&	 $ O\left({ N_{\rm f}}\psi(N)\right)$	&$N_{\rm src} m_{\rm RKS}$&{}&{}\\
SVD  &  $O( N_{\rm c} [2 N_{\rm src} m_{RKS} ]^2  ) $ &1&$m$&\# ROM interpolation points\\ 
{Eval}&$O(N_{\rm f} N_{\rm src}^2m_{RKS}^2) $ &$  1$&$\psi(N_{\rm c/f})$& scaling function of Helmholtz solver\\
\hline
\end{tabular}
\end{adjustbox}
\end{table}

We choose to benchmark the prototype implementation of algorithm in two parts. The basis construction is benchmarked on a CPU and the ROM evaluation on a GPU, as our algorithm is intended for the modern high performance computing environment. Efficient Helmholtz solvers or solvers for large, sparse matrix systems are generally developed for CPUs. GPUs, however, are designed for fast, parallel computation of large inner products and therefore excellent for evaluation stage of the proposed model order reduction technique.

For the smooth geophysical structure example given in this paper with $N_{\rm f}=4\cdot10^5, N_{\rm src}=12$ we compare the significant computation times in Table~\ref{tab:CompCostEval}. The basis computation is performed on a CPU\footnote{Solved using UMFPACK v~5.4.0 on a 4-Core Intel i5-4670 CPU@3.40~GHz with parallel BLAS level-3 routines} and the ROM evaluation on a GPU\footnote{Double precision python implementation on {\color{black}an} Nvidia GTX 1080 Ti}. In the proposed algorithm we solve the wave equation on a coarse grid only, leading to a much lower cost in basis construction than standard RKS were fine systems need to be solved. This is especially important considering that for large 3D applications it can become infeasible to solve the wave equation on a fine grid as the scaling function $\psi(\cdot)$ is much worse for 3D systems than for 2D systems. 
To show the cost of the evaluation of the reduced order model we benchmarked the evaluation of a single frequency $s_e$ on a GPU. The used model has the parameters $N_{\rm f}=4\cdot10^5, M_\text{SVD}=100, N_{\rm src}=12$ and the results are given in {\color{black}Table~\ref{tab:CompCostEval}.} The computationally most involving part is the computation of the Galerkin inner product of left hand vectors $\mathsf{V}_{m;{\rm EIK}}(s_\text{e})^H$ with the vectors $\mathsf{Q}_\text{fine}\mathsf{V}_{m;{\rm EIK}}(s_\text{e})$. Even for this relatively small example the computational cost of the solving a coarse system and projecting the ROM is smaller than evaluating the equation on a fine grid.
{\color{black}We infer that especially for large-scale models these computation times become negligible with respect to basis construction, which scales worse.}
The phase preconditioning approach drastically reduces the vertical communication of the algorithm as only coarse grid amplitudes and phases need to be transferred to all workers instead of fine grid RKS vectors. The ROM is essentially compressed and the storage is drastically reduced.

\begin{table}[]
\centering
\caption{Cost of the basis computation and evaluation of the reduced order model.}
\label{tab:CompCostEval}
\begin{adjustbox}{max width=\textwidth}
\begin{tabular}{|l |c |l |l |}
\hline
Basis Computation comparison & Computation & Time & {}\\
\hline
Block solve fine grid 		& $\bsQ_\text{fine}(s_i)^{-1}\bsB$		& 10.3s & {}\\
Single solve fine grid 	& $\bsQ_\text{fine}(s_i)^{-1}\bsb$		&  4.1s  & {}\\
Block solve coarse grid 	& $\bsQ_\text{coarse}(s_i)^{-1}\bsB$	&  0.6s  & {}\\
Single solve coarse grid 	& $\bsQ_\text{coarse}(s_i)^{-1}\bsb$ 	&  0.2s  & {}\\
\hline
Evaluation Step & Computation & Time & Scaling\\
\hline
Computing phase functions & $\exp{i \omega T_{\rm eik}}$ & 0.00546s & $N_\text{src} N_\text{f}$\\
Hadamard Products & $\exp{i \omega T_{\rm eik}} c_{\rm SVD}$ & 0.01496s & $M_\text{SVD} N_\text{src} N_\text{f}$\\
Galerkin inner product& $\mathsf{V}_{m;{\rm EIK}}(s_\text{e})^H \cdot \mathsf{Q}_\text{fine}\mathsf{V}_{m;{\rm EIK}}(s_\text{e})$ &1.752s & $N_\text{f} M_\text{SVD}^2 N_\text{src}^2$ \\
\hline
\end{tabular}
\end{adjustbox}
\end{table}

In summary, the computational cost is shifted from the poorly scalable basis construction to the highly scalable evaluation stage where inner products can be computed in an embarrassingly parallel fashion on multiple GPUs. We should also mention significant storage reduction due to phase-preconditioning  as the amplitude basis is smaller than the standard RKS basis for the same accuracy and is stored on the coarse grid only, which significantly reduces vertical communication.

\section{Conclusions}

In this paper we have {\color{black}introduced} phase-preconditioned rational Krylov subspace (PPRKS) for model order reduction and compression of wave propagation in unbounded domains {\color{black}targeting} problems with large propagation distances. {\color{black}Preconditioning} is achieved by splitting the RKS into {\color{black}incoming} and outgoing waves and factoring out {\color{black}strongly-oscillating phase-terms using the WKB approximation.} The remaining slowly-varying amplitude terms are SVD-compressed and then used in the construction of the preconditioned projection space via combinations of the singular vectors of the compressed space and the WKB phase terms computed for different inputs (sources). Finally, the ROM is evaluated via structure-preserving model reduction. 
 
Phase-preconditioning has multiple objectives, namely, reduction of the number of required interpolation points, right-hand sides, and spatial discretization. The number of interpolation points needed for a non-preconditioned RKS method is fundamentally limited by the Nyquist frequency. However, Phase-preconditioning weakens this dependence of the interpolation points on the Nyquist limit. We {\color{black}quantified} this effect for {\color{black}one-dimensional} SISO problems with {\color{black}piecewise} constant coefficients, where the PPRKS solution is exact with the number of the RKS shifts equal to the number of the homogeneous layers, i.e., this number plays the same role as the problem dimensionality in a conventional RKS approach. Thus in 1D the number of interpolation points needed is independent of the Nyquist rate. We do not have a rigorous estimate for the general case of multidimensional MIMO problems. {\color{black}However,} numerical experiments show that the {\color{black}positive effects} of preconditioning can increase due to simultaneous reduction of interpolation points and {\color{black}right-hand} sides. 
In addition, {\color{black}factorization} significantly relaxes requirements on the discretization grids for {\color{black}subspace computations}, which is critical for {\color{black}large-scale} problems due to the poor scalability of {\color{black}available} Helmholtz solvers.
 
Furthermore, factorization reduces the computation cost and increases the model-reduction compression factor. More specifically, for a given approximation accuracy the SVD compressed amplitude space is much smaller than the RKS basis. Numerical experiments for sections of 2D acoustic benchmark Marmousi problem show that the best cost reduction in subspace generation and compression is achieved for smooth wavespeed profiles; however, our approach is still competitive for the discontinuous models and can even be adapted to include resonant substructures.
 
Finally, we point out that due to the tensor product-like structure of the MIMO preconditioned projection space, it is larger than the space of compressed amplitudes and can even be comparable to the conventional block-RKSs required for the same accuracy. 

However, unlike the subspace generation and compression, the projection is generally highly scalable and  can be easily implemented in parallel on GPUs, leaving its computation-time insignificant.
 
In this paper, we presented a prototype implementation of PPRKS for 2D problems using serial computation; however, our eventual target is high-performance computing of large scale 3D seismic problems. In future work, we will also focus on optimal placement of the interpolation points. Specifically, we plan to investigate the approximation quality of the reduced-order models when we move the interpolation points away from the imaginary axis and into the complex plane. This can potentially improve both the approximation properties of the preconditioned RKS for the case of bounded time intervals and the performance of Helmholtz iterative solvers used for RKS construction. As a natural extension of PPRKS, we will also focus on the modeling of wave propagation in dispersive media using PPRKS, since this will not add additional costs to the evaluation stage. Finally, we note that WKB-like asymptotic solutions are available for many discrete and continuous dynamical systems, which opens up a number of possibilities to extend phase-preconditioning to such problems and related matrix-function computations, in particular if the cost of the solution of the shifted systems is dominant in the RKS algorithm.


\bibliographystyle{siamplain}
\bibliography{references}

\begin{thebibliography}{10}

\bibitem{InterpolatoryModelReduction}
{\sc A.~Antoulas, C.~Beattie, and S.~Gugercin}, {\em {Interpolatory Model
  Reduction of Large-Scale Dynamical Systems}}, in Efficient Modeling and
  Control of Large-Scale Systems, J.~Mohammadpour and K.~Grigoriadis, eds.,
  Springer US, Boston, MA, 2010, ch.~1, pp.~3--58,
  \url{https://doi.org/10.1007/978-1-4419-5757-3}.

\bibitem{beckermann2009error}
{\sc B.~Beckermann and L.~Reichel}, {\em {Error estimates and evaluation of
  matrix functions via the Faber transform}}, SIAM Journal on Numerical
  Analysis, 47 (2009), pp.~3849--3883.

\bibitem{Bender&Orszag}
{\sc C.~M. Bender and S.~A. Orszag}, {\em Advanced Mathematical Methods for
  Scientists and Engineers}, Springer-Verlag New York, 1999.

\bibitem{ParametricMOR}
{\sc P.~Benner, S.~Gugercin, and K.~Willcox}, {\em {A Survey of
  Projection-Based Model Reduction Methods for Parametric Dynamical Systems}},
  SIAM Review, 57 (2015), pp.~483--531,
  \url{https://doi.org/10.1137/130932715},
  \url{http://dx.doi.org/10.1137/130932715},
  \url{https://arxiv.org/abs/http://dx.doi.org/10.1137/130932715}.

\bibitem{Bindel2006}
{\sc D.~S. Bindel, Z.~Bai, and J.~W. Demmel}, {\em {Model Reduction for RF MEMS
  Simulation}}, Springer Berlin Heidelberg, Berlin, Heidelberg, 2006,
  pp.~286--295, \url{https://doi.org/10.1007/11558958_34},
  \url{http://dx.doi.org/10.1007/11558958_34}.

\bibitem{bourgeois1991marmousi}
{\sc A.~Bourgeois, M.~Bourget, P.~Lailly, M.~Poulet, P.~Ricarte, and
  R.~Versteeg}, {\em {Marmousi, model and data}}, The Marmousi Experience,
  European Association of Exploration Geophysicists,  (1991), pp.~5--16.

\bibitem{MimeticChew}
{\sc W.~C. Chew}, {\em {Electromagnetic theory on a lattice}}, Journal of
  Applied Physics, 75 (1994), pp.~4843 -- 4850,
  \url{https://doi.org/10.1063/1.355770}.

\bibitem{chung2014generalized}
{\sc E.~T. Chung, Y.~Efendiev, and W.~T. Leung}, {\em {Generalized Multiscale
  Finite Element Methods for Wave Propagation in Heterogeneous Media}},
  Multiscale Modeling \& Simulation, 12 (2014), pp.~1691--1721,
  \url{https://doi.org/10.1137/130926675},
  \url{https://doi.org/10.1137/130926675},
  \url{https://arxiv.org/abs/https://doi.org/10.1137/130926675}.

\bibitem{PML2016SiamReview}
{\sc V.~Druskin, S.~{G\"{u}ttel}, and L.~Knizhnerman}, {\em {Near-Optimal
  Perfectly Matched Layers for Indefinite Helmholtz Problems}}, SIAM Review, 58
  (2016), pp.~90--116, \url{https://doi.org/10.1137/140966927},
  \url{http://dx.doi.org/10.1137/140966927},
  \url{https://arxiv.org/abs/http://dx.doi.org/10.1137/140966927}.

\bibitem{druskin2016multi}
{\sc V.~Druskin, A.~V. Mamonov, and M.~Zaslavsky}, {\em {Multiscale S-Fraction
  Reduced-Order Models for Massive Wavefield Simulations}}, Multiscale Modeling
  \& Simulation, 15 (2017), pp.~445--475,
  \url{https://doi.org/10.1137/16M1072103},
  \url{https://doi.org/10.1137/16M1072103},
  \url{https://arxiv.org/abs/https://doi.org/10.1137/16M1072103}.

\bibitem{Remis2013PKS}
{\sc V.~Druskin and R.~Remis}, {\em {A Krylov Stability-Corrected
  Coordinate-Stretching Method to Simulate Wave Propagation in Unbounded
  Domains}}, SIAM Journal on Scientific Computing, 35 (2013), pp.~B376--B400,
  \url{https://doi.org/10.1137/12087356X},
  \url{http://dx.doi.org/10.1137/12087356X},
  \url{https://arxiv.org/abs/http://dx.doi.org/10.1137/12087356X}.

\bibitem{Remis2014EKS}
{\sc V.~Druskin, R.~Remis, and M.~Zaslavsky}, {\em {An extended Krylov subspace
  model-order reduction technique to simulate wave propagation in unbounded
  domains}}, Journal of Computational Physics, 272 (2014), pp.~608 -- 618,
  \url{https://doi.org/http://dx.doi.org/10.1016/j.jcp.2014.04.051},
  \url{http://www.sciencedirect.com/science/article/pii/S0021999114003271}.

\bibitem{Druskin20123883}
{\sc V.~Druskin and M.~Zaslavsky}, {\em {On convergence of Krylov subspace
  approximations of time-invariant self-adjoint dynamical systems}}, Linear
  Algebra and its Applications, 436 (2012), pp.~3883 -- 3903,
  \url{https://doi.org/http://dx.doi.org/10.1016/j.laa.2011.02.039},
  \url{//www.sciencedirect.com/science/article/pii/S002437951100173X}.
\newblock Special Issue dedicated to Heinrich Voss's 65th birthday.

\bibitem{engquist2011sweeping}
{\sc B.~Engquist and L.~Ying}, {\em {Sweeping preconditioner for the Helmholtz
  equation: moving perfectly matched layers}}, Multiscale Modeling \&
  Simulation, 9 (2011), pp.~686--710.

\bibitem{gockler2013convergence}
{\sc T.~G{\"o}ckler and V.~Grimm}, {\em {Convergence Analysis of an Extended
  Krylov Subspace Method for the Approximation of Operator Functions in
  Exponential Integrators}}, SIAM Journal on Numerical Analysis, 51 (2013),
  pp.~2189--2213.

\bibitem{guttel2010rational}
{\sc S.~G{\"u}ttel}, {\em {Rational Krylov methods for operator functions}},
  PhD thesis, Institut f{\"u}r Numerische Mathematik und Optimierung,
  Technische Universit{\"a}t Bergakademie Freiberg, 2010.

\bibitem{Haber20114403}
{\sc E.~Haber and S.~MacLachlan}, {\em {A fast method for the solution of the
  Helmholtz equation}}, Journal of Computational Physics, 230 (2011), pp.~4403
  -- 4418, \url{https://doi.org/http://doi.org/10.1016/j.jcp.2011.01.015},
  \url{http://www.sciencedirect.com/science/article/pii/S0021999111000337}.

\bibitem{barnett1983matrix}
{\sc {I. Gohberg and P. Lancaster and L. Rodman}}, {\em {Matrix Polynomials}},
  SIAM, Philadelphia, PA 19104-2688 USA, 2009.

\bibitem{iserles2006highly}
{\sc A.~Iserles, S.~N{\o}rsett, and S.~Olver}, {\em {Highly oscillatory
  quadrature: The story so far}}, in Numerical mathematics and advanced
  applications, Springer, 2006, pp.~97--118.

\bibitem{DruskinDiffusion}
{\sc L.~Knizhnerman, V.~Druskin, and M.~Zaslavsky}, {\em {On Optimal
  Convergence Rate of the Rational Krylov Subspace Reduction for
  Electromagnetic Problems in Unbounded Domains}}, {SIAM Journal on Numerical
  Analysis}, 47 (2009), pp.~953--971, \url{https://doi.org/10.1137/080715159},
  \url{http://dx.doi.org/10.1137/080715159},
  \url{https://arxiv.org/abs/http://dx.doi.org/10.1137/080715159}.

\bibitem{li2015phase}
{\sc Y.~E. Li and L.~Demanet}, {\em {Phase and amplitude tracking for seismic
  event separation}}, Geophysics, 80 (2015), pp.~WD59--WD72.

\bibitem{Mulder1}
{\sc W.~A. Mulder and R.-E. Plessix}, {\em {Time- versus frequency-domain
  modelling of seismic wave propagation}}, EAGE, 64th Annual Conference and
  Exhibition, Extended Abstract E015,  (2002).

\bibitem{ROM_projectionref}
{\sc M.~O'Connell, M.~E. Kilmer, E.~de~Sturler, and S.~Gugercin}, {\em
  {Computing Reduced Order Models via Inner-Outer Krylov Recycling in Diffuse
  Optical Tomography}}, SIAM Journal on Scientific Computing, 39 (2017),
  pp.~B272--B297, \url{https://doi.org/10.1137/16M1062880},
  \url{http://dx.doi.org/10.1137/16M1062880},
  \url{https://arxiv.org/abs/http://dx.doi.org/10.1137/16M1062880}.

\bibitem{Ruhe1984391}
{\sc A.~Ruhe}, {\em {Rational Krylov sequence methods for eigenvalue
  computation}}, Linear Algebra and its Applications, 58 (1984), pp.~391 --
  405, \url{https://doi.org/http://dx.doi.org/10.1016/0024-3795(84)90221-0},
  \url{//www.sciencedirect.com/science/article/pii/0024379584902210}.

\bibitem{CyclicReference}
{\sc A.~A. Samarskij and E.~S. Nikolaev}, {\em Numerical Methods for Grid
  Equations, Volume I Direct Methods}, Birkhäuser Basel, 1989.

\bibitem{SethianFFM}
{\sc J.~A. Sethian}, {\em { A fast marching level set method for monotonically
  advancing fronts}}, Proceedings of the National Academy of Sciences of the
  United States of America, 93 (1996), pp.~1591--159.

\bibitem{Townsend20140585}
{\sc A.~Townsend and L.~N. Trefethen}, {\em Continuous analogues of matrix
  factorizations}, Proceedings of the Royal Society of London A: Mathematical,
  Physical and Engineering Sciences, 471 (2014),
  \url{https://doi.org/10.1098/rspa.2014.0585},
  \url{http://rspa.royalsocietypublishing.org/content/471/2173/20140585},
  \url{https://arxiv.org/abs/http://rspa.royalsocietypublishing.org/content/471/2173/20140585.full.pdf}.

\bibitem{Weideman2007}
{\sc J.~A.~C. Weideman and L.~N. Trefethen}, {\em {Parabolic and Hyperbolic
  Contours for Computing the Bromwich Integral}}, Mathematics of Computation,
  76 (2007), pp.~1341--1356.

\end{thebibliography}
\end{document}